\newcommand{\al}{\alpha}
\newcommand{\be}{\beta}
\newcommand{\ga}{\gamma}
\newcommand{\de}{\delta}
\newcommand{\eps}{\varepsilon}
\newcommand{\bx}{\bar x}
\newcommand {\R} {\mathbb R}
\newcommand {\N} {\mathbb N}
\newcommand {\B} {\mathbb B}
\newcommand {\dom} {{\rm dom}\,}
\newcommand {\cl} {{\rm cl}\,}
\newcommand {\sd} {\partial}
\newcommand {\Int} {{\rm int}\,}
\newcommand{\ds}{\displaystyle}
\def\nbh{neighbourhood}
\def\es{\emptyset}
\def\lsc{lower semicontinuous}
\def\LHS{left-hand side}
\def\RHS{right-hand side}
\def\EVP{Ekeland variational principle}
\def\Fr{Fr\'echet}
\newcommand{\norm}[1]{\left\Vert#1\right\Vert}
\newcommand {\diam} {{\rm diam}\,}
\newcommand{\red}[1]{\textcolor{red}{#1}}
\newcommand{\olive}[1]{\textcolor{olive}{#1}}
\newcommand{\ang}[1]{\left\langle #1 \right\rangle}
\newcommand{\qdtx}[1]{\quad\mbox{#1}\quad}
\newcommand{\AND}{\quad\mbox{and}\quad}
\newcounter{mycount}
\newcommand{\AK}[1]{\todo[inline]{AK {#1}}}
\newcommand{\AH}[1]{\todo[inline,color=green!40]{AH {#1}}}
\setlist[enumerate,1]{label={\rm(\roman*)}}
\setlist[enumerate,2]{label={\rm(\alph*)}}
\newcommand{\PM}[1]{\todo[inline,color=blue!40]{Reviewer: {#1}}}
\newcommand{\sdc}{{\partial}^{C}}
\newcommand{\sdf}{\partial}
\begin{document}


\title{Optimality Conditions and Subdifferential Calculus for Infinite Sums of Functions}

\author{Abderrahim Hantoute
\and Alexander Y. Kruger
\and Marco A. L\'opez}

\institute{
Abderrahim Hantoute \at
Department of Mathematics, University of Alicante, Spain\\
\email{hantoute@ua.es}, ORCID: 0000-0002-7347-048X
\and
Alexander Y. Kruger (corresponding author) \at
Optimization Research Group,
Faculty of Mathematics and Statistics,
Ton Duc Thang University, Ho Chi Minh City, Vietnam\\ \email{alexanderkruger@tdtu.edu.vn},
ORCID: 0000-0002-7861-7380
\and
Marco A. L\'opez \at
Department of Mathematics, University of Alicante, Spain\\
\email{marco.antonio@ua.es},
ORCID: 0000-0002-0619-9618}
\maketitle

\date{}
\if{
\AK{17/06/25.
1. Example 1.1 (uniform lower semicontinuity vs firm uniform lower semicontinuity) and Proposition 2.1 (firm uniform lower semicontinuity of indicator functions) added.

2. ``Notation and preliminaries'' included in the Introduction.

3. Example 2.1 (by Abderrahim) reworked.

4. Remark 2.5 (i) slightly expanded.

5. Quasiuniform lower semicontinuity (the last section) is now ``on a set''. Multiple changes.

6. References fixed. Thanks, Abderrahim.
}
\AK{15/06/25.
1. Introduction expanded using Abderrahim's suggestions from 4/06/25.

2. Weak uniform lower semicontinuity and weak firm uniform lower semicontinuity as well as their ``quasi'' analogues eliminated as discussed 13/06/25.

3. The statements of Theorems \ref{T6.01} and \ref{T5.2} shortened.
The second part of the former Theorem~\ref{T5.2} is now Proposition 5.7.
I have failed to move it to Sect. 3 as its assumptions guarantee quasiuniform lower semicontinuity.
\AK{12/06/25.
Multiple changes in response to Abderrahim's comments, especially in Theorems~\ref{T6.1} and \ref{T7.1} and their proofs.}

\AK{6/06/25.
1. Abstract updated.

2. Some ``motivation'' stuff added to the Introduction.

3. Description of the content of Section~\ref{S3} expanded.

4. Description of the content of the ``quasi'' Section~\ref{quasi} added.

5. A portion of the former Introduction containing formulas moved to Section~\ref{S3}.

6. Conclusions expanded.
In particular, a short description of the forthcoming paper from NotesMarco31May added.

7. Some minor editing here and there.
}

\AK{4/06/25.
1. Quasi properties moved from Sect.~\ref{S3} to the new Sect.~\ref{quasi} after the optimality conditions.
The main Theorems~\ref{T6.1} and \ref{T7.1} are slightly simplified.
Their original versions are given with some minor hints of the proofs in Sect.~\ref{quasi}.

2. Former Section~5 deleted.
}

\AK{2/06/25.
Uniform lower semicontinuity properties on $U$ replaced in most cases by the properties on the whole (metric) space, and subscript $U$ removed from the basic notations.
There are, nevertheless, instances where we need properties on subsets.}

\AK{30/05/25.
Most of the red colour removed (assuming that those old changes have already been accepted by everybody) in preparation for more dramatic changes.
Some minor editing here and there.}

\AK{18/05/25.
1. Ref. [2] added (\EVP); ref [8] (formerly [12]) updated.

2. ``Robustness'' replaced by ``$\inf$-stability''.

3. Minor changes in the last section reflecting our discussion on 15/05/25.
}

\AK{12/05/25.
1. Springer style applied.

2. Abstract, Introduction and Conclusions updated to comply with the recent changes in the main part.

3. Some definitions involving formulas moved from the Introduction to Section~\ref{S3}.
They have been slightly shortened and reorganised.

4. I have tried to address some remarks of the reviewer concerning (former) Section~5.
}\fi

\begin{abstract}
The paper extends the widely used in optimisation theory decoupling techniques to infinite collections of functions.
Extended concepts of uniform lower semicontinuity and firm uniform lower semicontinuity
are discussed.
The main theorems give fuzzy subdifferential necessary conditions (multiplier rules) for a local minimum of the sum of an infinite collection of functions and fuzzy subdifferential sum rules
without the traditional Lipschitz continuity assumptions.
More subtle ``quasi'' versions of the uniform infimum and uniform lower semicontinuity properties are also discussed.

\end{abstract}

\keywords{
Decoupling technique \and multiplier rule \and infinite sum \and fuzzy calculus}

\subclass{49J52 \and 49J53 \and 49K40 \and 90C30 \and 90C34 \and 90C46}


\section{Introduction}\label{sec:introduction}

The main idea of the \emph{decoupling approach} to developing subdifferential rules in nonsmooth optimisation and calculus consists in allowing the functions involved in the problem to depend on their own variables while ensuring that the distance between the variables tends to zero.
This allows one to express the resulting conditions in terms of subdifferentials of the individual functions and/or normal cones to individual sets, or appropriate primal space tools.
The approach has been intuitively used in numerous publications for decades.
As demonstrated in \cite{Las01} and summarized in \cite[Section~6.1.4]{BorZhu05}, all basic subdifferential rules in Banach spaces are
different facets of a variational principle in conjunction with a decoupling method.

The basics of the decoupling approach were formalized at the end of the 1990s -- beginning of the 2000s in Borwein and Ioffe \cite{BorIof96}, Borwein and Zhu \cite{BorZhu96} and Lassonde \cite{Las01} (see also \cite{BorZhu05,Pen13,FabKruMeh24}): several useful
concepts, describing the joint behaviour of finite collections of functions typical for optimisation problems, were introduced and investigated.
Employing the
concepts and techniques
developed in \cite{BorZhu96,BorIof96,Las01} allowed researchers to streamline proofs of optimality conditions and calculus relations, unify and simplify the respective statements, as well as clarify and in many cases weaken the assumptions.
In particular, the widely spread (even now) belief that the traditional assumption of Lipschitz continuity of all but one function in
subdifferential
rules is absolutely necessary was demonstrated to be false.

The decoupling techniques from \cite{BorZhu96,BorIof96,Las01} are applicable to a finite number of functions.
Let $T$ be a nonempty finite index set.
Given a collection of functions $\{f_{t}\}_{t\in T}$ from a metric space $X$ to $\R_{\infty}:=\R\cup\{+\infty\}$, one can define the \emph{uniform infimum}
of their sum:
\begin{align}
\label{La0-}
{\Lambda}(\{f_t\}_{t\in T}) :=\liminf\limits_{\substack{\diam\{x_t\}_{t\in T}\to0}}\; \sum_{t\in T}f_{t}(x_{t}).
\end{align}
\if{
\red{************}
\begin{align*}
{\Lambda}(\{f_t\}_{t\in T}) :=&\sup_{\de>0}\inf_{\substack{\diam\{x_t\}_{t\in T}<\de}}\; \sum_{t\in T}f_{t}(x_{t})
\\
=&\sup_{\de>0}\;\inf_{\substack{\|x_t-x\|<\de,\; x,x_t\in X\; (t\in T)}}\; \sum_{t\in T}f_{t}(x_{t})
\\
=&\sup_{\de>0}\;\inf_{\substack{x,x_t\in X\; (t\in T)}}\; \sum_{t\in T}(f_{t}+i_{B_\de(x)})(x_{t})
\\
=&\sup_{\de>0}\;\inf_{x\in X}\; \sum_{t\in T}\inf_{x_t\in X}(f_{t}+i_{B_\de(x)})(x_{t})
\\
=&\sup_{\de>0}\;\inf_{\substack{x\in X}}\; \sum_{t\in T}(f_{t}\square i_{\de\B})(x).
\\
(f\square g)(x):=&\inf_u(f(u)+g(x-u))
\end{align*}

\red{************}
}\fi
Here, $\{x_t\}_{t\in T}$ denotes a collection of points $x_t$ $(t\in T)$, while $\diam Q:=\sup_{x_1,x_2\in Q}d(x_1,x_2)$ is the diameter of a set $Q$.
Definition \eqref{La0-} replaces minimizing
the conventional sum $\sum_{t\in T}f_{t}$ by minimizing
the \emph{decoupled sum} $\{x_t\}_{t\in T}\mapsto\sum_{t\in T}f_{t}(x_{t})$
{with $\diam\{x_t\}_{t\in T}\to0$}.
One obviously has $\Lambda(\{f_t\}_{t\in T})\le \inf\sum_{t\in T}f_{t}$, and the inequality can be strict;
see Example~\ref{E2.1}.
\sloppy

\if{
Definition \eqref{La0-} can be rewritten equivalently as
\begin{align*}
{\Lambda}(\{f_t\}_{t\in T}) ={\sup_{\de>0}}\;\inf_{\substack{x\in X}}\; \sum_{t\in T}(f_{t}\square i_{\de\B})(x).
\end{align*}
Here, $i_U$ is the \emph{indicator function} of a set $U$: $i_U(x)=0$ if $x\in U$ and $i_U(x)=+\infty$ if $x\notin U$, and $\square$ denotes the \emph{infimal convolution} of functions: if $f,g:X\to\R_\infty$, then $(f\square g)(x):=\inf_{u\in X}(f(u)+g(x-u))$ for all $x\in X$.
}\fi

The collection $\{f_{t}\}_{t\in T}$ is said to be \emph{uniformly lower semicontinuous}
(on $X$) if
\begin{gather}
\label{La0qc}
\inf\sum_{t\in T}f_{t}\le
\Lambda(\{f_t\}_{t\in T}).
\end{gather}
Inequality \eqref{La0qc}
(in view of
{the above remark},
it can only hold as equality) plays the role of a qualification condition.
It lies at the core of the decoupling approach.
Some
{typical}
sufficient conditions for \eqref{La0qc} can be found in \cite{Las01,Pen13,FabKruMeh24}.

If the infimum in the \LHS\ of \eqref{La0qc} is attained at some point $\bx\in X$, i.e.,
$\sum_{t\in T}f_{t}(\bx)\le
\Lambda(\{f_t\}_{t\in T})$,
the point $\bx$ is called a \emph{uniform minimum}  \cite{Las01}
of $\sum_{t\in T}f_{t}$.
Every uniform minimum is obviously a conventional minimum, while the converse implication is not true in general,
and this is where qualification conditions come into play.
{Their role is typically to ensure the converse implication, i.e., the validity of inequality \eqref{La0qc}.}

When $X$ is a subset of a normed vector space, a slightly different
variant of uniform infimum is used in \cite{Las01,BorZhu05,Pen13} along with \eqref{La0-}.
The corresponding quantity is in general smaller than \eqref{La0-}.
The differences between the two concepts of uniform infimum and the corresponding
concepts of uniform lower semicontinuity are not essential for the subject of this paper, and we will only discuss in what follows
{definition \eqref{La0-} and its extensions}.

{Condition \eqref{La0qc} compares the optimal values of the conventional and decoupled minimization problems.
The corresponding minimizing sequences and minimum points (if exist) may lie far apart.}
A more advanced \emph{firm uniform lower semicontinuity} notion was introduced in \cite{FabKruMeh24} as an analytical counterpart of the
\emph{sequential uniform lower semicontinuity (ULC property)} used in \cite{BorIof96,BorZhu05}.
The collection $\{f_{t}\}_{t\in T}$ is firmly uniformly lower semicontinuous
(on $X$)
if ${\Theta}(\{f_t\}_{t\in T})=0$, where
\begin{align}
\label{Th}
{\Theta}(\{f_t\}_{t\in T}) :=\limsup_{\substack{\diam\{x_t\}_{t\in T}\to0\\ x_t\in\dom f_t\;(t\in T)}} \inf_{x\in X} \max\Big\{\max_{t\in T}d(x,x_t),\sum_{t\in T} (f_t(x)-f_{t}(x_{t}))\Big\}.
\end{align}
\if{
{\AH{08-08-2024. To be consistent with the convention $+\infty-(+\infty)=+\infty$ used in the last section, I would propose to use it here and remove the additional condition $x_t\in\dom f_t$ $(t\in T)$ from the definition of ${\Theta}_{U}(\{f_t\}_{t\in T})$. This change would not alter the value of this constant but would make it easier.}}
\AK{12/08/24.
I am afraid it would.
To preserve the value we would need the opposite convention: $+\infty-(+\infty)=-\infty$ (or $=0$).}
}\fi
Definition \eqref{Th}
combines minimizing
the conventional sum (over $x$) and the decoupled one (over $x_t$, $t\in T$)
as well as minimizing the distances $x\mapsto d(x,x_t)$ $(t\in T)$, thus, forcing the variables of the two mentioned minimization problems to be close.
\if{
\AH{Observe that the set $U$ can always be chosen smaller and smaller, so that the distance between the $x_t$'s could be made smaller}
\AK{12/08/24.
$U$ is a fixed set here; can be the whole space.}
}\fi
The maximum of the distances $d(x,x_t)$ over $t\in T$ can 
be replaced in \eqref{Th}
(thanks to the condition $\diam\{x_t\}_{t\in T}{\to0}$)
by a single distance $d(x,x_t)$ for some $t\in T$.
Compared to \eqref{La0-}, definition \eqref{Th}
contains
{additional}
technical conditions $x_t\in\dom f_t$ $(t\in T)$.
Their role is to exclude the undesirable indefinite form $+\infty-(+\infty)$ in the \RHS\ of \eqref{Th}.
Note that these conditions are also implicitly present in \eqref{La0-}, though their absence in the definition does not cause problems.
Condition ${\Theta}(\{f_t\}_{t\in T})=0$ can be strictly stronger than \eqref{La0qc}; see examples in \cite[Section~3]{FabKruMeh24}.


In the current paper, we extend the decoupling techniques developed in \cite{BorZhu96,BorIof96,Las01,FabKruMeh24} to arbitrary collections of functions.
\if{
\PM{
``in the overall introduction, not a single sentence
can be found which motivates the present study and the precise setting,
i.e., why should such concepts be studied (apart from academic interest).
This is not acceptable.

``in Section 5, a reduction argument is used to be able to restrict to
the classical setting $T :=\N$. If there is no application-driven need for more
generality or another convincing reason for it, $T :=\N$ should be used.''}
}\fi
Infinite collections of functions are ubiquitous in analysis and optimization.
The most well known examples come from the classical definition of integral as the limit of finite sums and Taylor series expansion of a function.
In the first case, if the integrand involves a parameter, natural questions are whether its differentiability properties with respect to the parameter are inherited by the integral, and whether the derivative (subdifferential) of the integral can be represented via the derivative (subdifferential) of the integrand.
\if{
\AK{6/06/25.
The integral model described in HLK8-1 does not seem to fall into the framework of this paper.}
{\AH{10/06/25. Yes; for that we should extend our model in future opportunity.}}
}\fi

The need to minimize the sum $\sum_{t\in T}f_{t}$ of a possibly infinite collection of functions arises rather often and naturally leads to minimizing partial sums $\sum_{t\in S}f_{t}$ corresponding to finite subsets $S\subset T$.
In concrete situations, $S$
corresponds to observed data samples.
Employing larger subsets $S$ means access to more data and, so, more accurate and traceable optimal
solutions. However, because solving all such subproblems individually may be
inconceivable, one approach consists of considering the worst-case scenario over all finite
subsets by appealing to the \emph{minimax robust} optimization: minimizing the supremum of
$\sum_{t\in S}f_{t}$ over all finite subsets $S\subset T$ (see, e.g.,
\cite{BotJeyLi13,DinGobVol19,DinGobVol20}).
This approach has found numerous application in stochastic optimization, particularly
within \emph{distributionally robust} optimization  frameworks (as detailed in \cite{XieWei24}).
However, the approach has limitations related to the
possible non-existence of or high cost of computing robust solutions,
often due to non-differentiability of the associated supremum function.
To overcome this difficulty, an alternative robust model of minimizing the (upper) infinite sum $\overline\sum_{t\in T}f_{t}$ can be
considered.
The latter is defined for all $x$ as the upper limit of $\sum_{t\in S}f_{t}(x)$ over all finite subsets $S\uparrow T$ (in the sense of ascending inclusions).
The objective function in this formulation is likely to
behave more smoothly than the supremum function mentioned above.
The latter
model can be more appropriate than the minimax robust problem as long as
one is interested in minimizing the entire sum of the $f_{t}$'s.
Besides, when $T$ is finite, it reduces to minimization of the
usual sum, while the minimax robust problem has a completely different
interpretation.
A related infinite sum was studied in \cite{LucVol19, HanJouVic23}
(see also \cite{
ZheNg04,LiNg08,ValZal16}),
provided that the above upper limit
exists as a limit. Subdifferential calculus rules were then established for
this operation in the context of convex analysis. The approach of \cite{LucVol19}
adopts some extensions of classical
techniques of integration to infinite collections of functions for the case of
countable sums. The development in \cite{HanJouVic23}
performs a reduction step that allows one to go from arbitrary infinite to countable sums, and then use the classical theory of normal convex integrals
\cite{
Iof06,LopThi08,HanJou18,CorHanPer19,CorHanPer21}.

Infinite optimization can benefit from our approach, as long as the problems are reformulated as  minimization of suitable infinite sums \cite{CorHanLop23, GobLop98}.
In the context of convex optimization, particularly convex duality theory, our infinite sum framework can be useful when constructing tractable dual problems that exhibit strong duality, i.e., problems with \emph{zero duality gap} and with dual solutions that are attained.
In contrast to the existing dual (e.g., of Haar type), fuzzy multiplier
rules and general optimality conditions can be produced under reasonable
sufficient conditions, namely of \emph{Slater} type due to the convex setting.
Several dual formulations have been proposed for such problems
(see, e.g., \cite{Duf56,Kre61,Bor83}). A
comprehensive treatment of the theory in the linear case can be found in \cite[Chapter~3]{AndNas87}.
In general infinite optimization, infinite sums offer a useful tool for defining
alternative dual problems with zero duality
gap and strong duality properties.
We refer the readers to 
\cite{HanKruLop2} for more details.

We extend definitions \eqref{La0-} and \eqref{Th} to the setting of an arbitrary index set $T$.
This leads to the corresponding extensions of the definitions of uniform lower semicontinuity and firm uniform lower semicontinuity.
The infinite sum of functions is defined as the upper limit of sums over the family of all finite subsets of $T$ considered as a directed set.
Several characterizations and sufficient conditions ensuring uniform and firm uniform lower semicontinuity are provided,
thus, extending the decoupling theory to infinite collections of functions.
We show that the firm uniform lower semicontinuity property is stable under uniformly continuous perturbations of one of the functions.
The new 
definitions and results are
discussed in detail in Section~\ref{S3}.

In Section~\ref{S6}, we apply the extended decoupling techniques to deriving
fuzzy subdifferential necessary conditions (multiplier rules) for a
local minimum of the sum of an arbitrary collection of functions.
In general Banach spaces, the conditions are formulated in terms of Clarke subdifferentials, while in Asplund spaces, \Fr\ subdifferentials are used.
As a consequence,
we establish
a generalized version of the (strong) fuzzy sum rule for Fr\'{e}chet subdifferentials of an arbitrary collection of functions without the traditional Lipschitz continuity assumptions.

In Section~\ref{quasi}, we discuss more subtle ``quasi'' versions of the uniform infimum and uniform lower semicontinuity properties studied in Section~\ref{S3}, which are still sufficient for the multiplier and sum rules in Section~\ref{S6}.
In the case of a pair of functions, the ``quasi'' properties were used in \cite{FabKruMeh24} when establishing general optimality conditions and subdifferential calculus formulas in non-Lipschitzian settings, strengthening some results from \cite{BorZhu96,BorIof96,Las01}.
Some ``quasi'' statements do not have analogues in Section~\ref{S3}.
For instance, we show that, if a family of functions is firmly quasiuniformly lower semicontinuous on a subset $U\subset X$, then it is firmly quasiuniformly lower semicontinuous on any subset of $U$.

Some conclusions are presented in Section~\ref{sec:conclusions}.
We introduce there our forthcoming paper \cite{HanKruLop2} dedicated to applications of the results in Sections~\ref{S6} and \ref{quasi} to optimality of an infinite sum of functions coming from semi(infinite) optimization.

\paragraph{Notation and preliminaries}
\label{S2}

Our basic notation is standard; cf., e.g.,
\cite{Kru03,Mor06.1,Iof17}.
Throughout the paper $X$ is either a metric or a normed (typically, Banach or Asplund) space.
Recall that a Banach space is \emph{Asplund} if every continuous convex function on an open convex set is Fr\'echet differentiable on a dense subset \cite{Phe93}, or equivalently, if the dual of each
separable subspace is separable.
All reflexive, particularly, all finite-dimensional Banach spaces are Asplund.
We refer the reader to
\cite{Phe93,BorZhu05,Mor06.1} for discussions about and characterizations of Asplund spaces.

We use the same notations
$d(\cdot,\cdot)$ and $\norm{\cdot}$
for distances
(including point-to-set
distances)
and norms in all spaces. 
Symbol $0$ denotes the zero vector in all linear spaces.
Normed spaces are often treated as metric spaces with the distances induced by the norms.
If $X$ is a normed space, its topological dual is denoted by $X^*$, while $\langle\cdot,\cdot\rangle$
denotes the bilinear form defining the pairing between the two spaces.
In a metric space, $\B$ and $\overline\B$ are the \emph{open} and \emph{closed} unit balls, while $B_\de(x)$ and $\overline B_\de(x)$ are the
\emph{open} and \emph{closed}
balls with radius $\de>0$ and centre $x$, respectively.
We write $\B^*$
to denote the \emph{open}
unit ball in the dual to a normed space.

Given a subset $U\subset X$,
the notations $\Int U$ and $\cl U$ represent the interior and closure of $U$, respectively.
The indicator function of $U$ is defined by $i_U(x)=0$ if $x\in U$ and $i_U(x)=+\infty$ if $x\notin U$.
In a normed space, $\cl^w$ denotes the closure in the weak topology.
Given a number $\de>0$, we set  $B_\de(U):=\bigcup_{x\in U}B_\de(x)$.

Symbols $\R$, $\R_+$ and $\N$ represent the sets of all real numbers, all nonnegative real numbers and all positive integers, respectively, and we denote $\R_{\infty}:=\R\cup\{+\infty\}$.
We make use of the conventions
$\inf\es_{\R}=+\infty$ and
$\sup\es_{\R_+}=0$, where $\es$ (possibly with a subscript) denotes the empty subset (of a given set).

For an extended-real-valued function $f\colon X\to\R_{\infty}$,
its domain is defined by
$\dom f:=\{x \in X\mid {f(x) < +\infty}\}$.
If $X$ is a normed space, and $\bar x\in\dom f$, the \emph{\Fr} and \emph{Clarke subdifferentials} of $f$ at $\bar x$ can be defined as
(see, e.g., \cite{Roc79,Cla83,Kru03,Mor06.1,Iof17})
\begin{gather*}
	\sdf f(\bar x)
	:=
	\left\{x^*\in X^*\,\middle|\,
		\liminf_{\bx\ne x\to\bar x}
			\frac{ f(x)- f(\bar x)-\langle x^*,x-\bar x\rangle}{\norm{x-\bar x}}
		\geq 0
	\right\},
\\
\sd^Cf(\bar x):=
	\left\{x^*\in X^*\mid\ang{x^*,u}\le f^\uparrow(\bx,u)
\qdtx{for all}
u\in X
	\right\},
\end{gather*}
where
$$
f^\uparrow(\bx,u):=\lim_{\rho\downarrow0}\; \sup_{\substack{(x,\mu)\in B_\rho(\bx,f(\bx))\\ t\in(0,\rho),\;f(x)\le\mu}}\;\inf_{z\in B_\rho(u)}\frac{f(x+tz)-\mu}t
$$
is the \emph{upper subderivative} \cite{Roc79} of $f$ at $\bar x$ in direction $u\in X$.
The latter expression reduces to the \emph{Clarke directional derivative} when $f$ is Lipschitz continuous near $\bx$.
If $\bx$ is a local minimum of $f$, then obviously $0\in\sdf f(\bx)$ (\emph{Fermat rule}).
It always holds $\sd f(\bx)\subset\sd^Cf(\bx)$, and both sets reduce to the conventional subdifferential of convex analysis when $f$ is convex:
\begin{gather*}
\sdf f(\bar x)=\sdc f(\bar x)=
\{x^*\in X^*\mid
f(x)-f(\bar x)-\langle x^*,x-\bar x\rangle\geq0
\qdtx{for all}
x\in X\}.
\end{gather*}

Next, we recall (a version of) the celebrated Ekeland variational principle and some standard subdifferential sum rules; see, e.g.,
\cite{AubFra90,BorZhu05,Mor06.1,Iof17}.

\begin{lemma}[\EVP]
\label{Ekeland}
Let $X$ be a complete metric space,
$f\colon X\to\R_{\infty}$ lower semicontinuous and bounded from below, and
$\bx\in\dom f$.
Then, for any $\varepsilon>0$, there exists an $\hat x\in X$ such that
$f(\hat x)\le  f(\bx)$ and
$f(\hat x)< f(x)+\varepsilon d(x,\hat x)$ for all
$x\in X\setminus\{\hat x\}$.
\end{lemma}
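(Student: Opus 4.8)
The plan is to give the standard iterative construction rather than Br\o ndsted's partial-order argument, since it makes the roles of completeness and lower semicontinuity most transparent. Write $\lambda := \eps$ for brevity. First I would build a sequence $(x_n)$ together with a nested family of closed sets. Set $x_1 := \bx$ and, having chosen $x_n$, define
\[
S_n := \{x \in X \mid f(x) + \lambda\, d(x,x_n) \le f(x_n)\}.
\]
Because $f$ is lower semicontinuous, each $S_n$ is closed; it is nonempty because $x_n\in S_n$; and an easy induction (using $x_{n+1}\in S_n$) keeps $x_n\in\dom f$, so $f(x_n)$ is always finite. Since $f$ is bounded below, $m_n := \inf_{S_n} f$ is finite, and I would then pick $x_{n+1} \in S_n$ with $f(x_{n+1}) < m_n + 2^{-n}$.

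Next I would verify the two structural facts on which everything rests. The triangle inequality gives $S_{n+1} \subset S_n$, and the membership $x_{n+1} \in S_n$ yields $\lambda\, d(x_n,x_{n+1}) \le f(x_n) - f(x_{n+1})$, so that $(f(x_n))$ is nonincreasing and, being bounded below, converges to some $L \in \R$. Summing the previous inequality telescopes to $\lambda \sum_n d(x_n,x_{n+1}) \le f(x_1) - L < \infty$, whence $(x_n)$ is Cauchy. Completeness of $X$ then produces a limit $\hat x := \lim_n x_n$. Since $x_m \in S_n$ for every $m > n$ and $S_n$ is closed, I get $\hat x \in \bigcap_n S_n$; in particular $\hat x \in S_1$ gives $f(\hat x) + \lambda\, d(\hat x, \bx) \le f(\bx)$, hence $f(\hat x) \le f(\bx)$, the first assertion.

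The remaining, and most delicate, step is the strict variational inequality. I would argue by contradiction: suppose some $y \ne \hat x$ satisfies $f(y) + \lambda\, d(y,\hat x) \le f(\hat x)$. Using $\hat x \in S_n$, i.e. $f(\hat x) + \lambda\, d(\hat x, x_n) \le f(x_n)$, together with $d(y,x_n) \le d(y,\hat x) + d(\hat x, x_n)$, I would chain the two estimates to obtain $f(y) + \lambda\, d(y,x_n) \le f(x_n)$, i.e. $y \in S_n$ for all $n$. Consequently $f(y) \ge m_n$ for every $n$; combined with $f(x_{n+1}) < m_n + 2^{-n}$ and $f(x_{n+1}) \to L$, letting $n \to \infty$ forces $L \le f(y)$, while lower semicontinuity gives $f(\hat x) \le \liminf_n f(x_n) = L$. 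Thus $f(\hat x) \le f(y)$, and the contradiction hypothesis becomes $\lambda\, d(y,\hat x) \le f(\hat x) - f(y) \le 0$, so $y = \hat x$, contrary to $y \ne \hat x$.

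The only genuine obstacle I anticipate is this last paragraph: one must check that the near-infimum choices $f(x_{n+1}) < m_n + 2^{-n}$ are strong enough to pin $f(y)$ from below at the limiting value $L$, and that lower semicontinuity is invoked in the correct direction to compare $f(\hat x)$ with $L$. Everything else (closedness of the $S_n$, the telescoping bound, and the Cauchy property) is routine once the sets $S_n$ are set up, and the whole scheme reduces, as usual, to turning completeness plus lower semicontinuity into the existence of an ``almost minimizer'' that cannot be improved without moving.
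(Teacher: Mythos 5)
Your proof is correct and complete. Note, however, that the paper does not prove this lemma at all: it is recalled as a known result with citations to standard references, so there is no ``paper proof'' to compare against. Your argument is the classical iterative construction (essentially Ekeland's original scheme, as found e.g.\ in Aubin--Frankowska): the nested closed sets $S_n$, the near-infimum choice $f(x_{n+1})<m_n+2^{-n}$, the telescoping bound $\lambda\sum_n d(x_n,x_{n+1})\le f(x_1)-L$ giving the Cauchy property, and the final contradiction step all check out. In particular, the two delicate points are handled correctly: the inclusion $y\in\bigcap_n S_n$ for a hypothetical violator $y$ pins $f(y)\ge m_n>f(x_{n+1})-2^{-n}\to L$ from below, and lower semicontinuity is used in the right direction to get $f(\hat x)\le\liminf_n f(x_n)=L\le f(y)$, which forces $\lambda\, d(y,\hat x)\le 0$ and hence the contradiction. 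One could alternatively invoke Br\o ndsted's partial-order/Zorn argument or the Bishop--Phelps technique, but for the precise statement used here (the strict inequality version with $f(\hat x)\le f(\bar x)$ and no localization estimate $d(\hat x,\bar x)\le\varepsilon^{-1}(f(\bar x)-\inf f)$, which your construction would in fact also deliver from $\hat x\in S_1$), your elementary route is exactly what is needed and requires nothing beyond completeness and lower semicontinuity.
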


\begin{lemma}
[Subdifferential sum rules]
\label{SR}
Let $X$ be a Banach space,
$f_1,f_2\colon X\to\R_{\infty}$,
and $\bar x\in\dom f_1\cap\dom f_2$.
\begin{enumerate}
\item
\label{SR.1}
{\rm Convex rule}:
If $f_1$ and $f_2$ are convex, and $f_1$ is continuous at a point in $\dom f_2$, then
${\sd}(f_1+f_2)(\bar x)={\sd} f_1(\bar x)+{\sd} f_2(\bar x).$
\item
\label{SR.Clarke}
{\rm Clarke rule}:
If $f_1$ is Lipschitz continuous near $\bar x$,
and $f_2$ is lower semicontinuous near $\bar x$,
then
$\sdc(f_1+f_2)(\bar x)\subset \sdc f_1(\bar x)+\sdc f_2(\bar x).$
\item
\label{SR.2}
{\rm \Fr\ rule}:
If $X$ is Asplund, $f_1$ is Lipschitz continuous near $\bar x$,
and $f_2$ is lower semicontinuous near $\bar x$,
then, for any $x^*\in{\sdf}(f_1+f_2)(\bar x)$ and $\varepsilon>0$,
there exist points $x_1,x_2\in X$
such that $\norm{x_i-\bar x}<\varepsilon$, $|f_i(x_i)-f_i(\bx)|<\varepsilon$
$(i=1,2)$,
and
$x^*\in{\sdf}f_1(x_1)+{\sdf}f_2(x_2)+\varepsilon\B^*$.
\end{enumerate}
\end{lemma}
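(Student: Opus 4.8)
All three assertions are classical, so the plan is to recall the standard arguments and to pinpoint where each hypothesis is used.

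For part~\ref{SR.1}, the inclusion $\sdf f_1(\bx)+\sdf f_2(\bx)\subset\sdf(f_1+f_2)(\bx)$ is immediate: adding the two subgradient inequalities produces the one for the sum. For the reverse inclusion I would fix $x^*\in\sdf(f_1+f_2)(\bx)$ and separate, in $X\times\R$, the epigraph of the convex function $x\mapsto f_1(x)-\ang{x^*,x}$ from the hypograph of $x\mapsto f_1(\bx)+f_2(\bx)-\ang{x^*,\bx}-f_2(x)$. These two convex sets touch at $\bx$ but have disjoint interiors precisely because $x^*\in\sdf(f_1+f_2)(\bx)$, and the continuity of $f_1$ at a point of $\dom f_2$ guarantees that the former has nonempty interior, so that the Hahn--Banach theorem yields a \emph{non-vertical} separating hyperplane. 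Its slope is a functional $x_1^*\in\sdf f_1(\bx)$ with $x^*-x_1^*\in\sdf f_2(\bx)$, which is the asserted decomposition.

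For part~\ref{SR.Clarke}, the starting point is the subadditivity of the upper subderivative: when $f_1$ is Lipschitz near $\bx$ and $f_2$ is \lsc, one checks directly from the definition of $f^\uparrow$ that $(f_1+f_2)^\uparrow(\bx,u)\le f_1^\uparrow(\bx,u)+f_2^\uparrow(\bx,u)$ for every $u\in X$, the term $f_1^\uparrow(\bx,\cdot)$ being finite, sublinear and continuous (it is the Clarke directional derivative). Since $\sdc g(\bx)$ is by definition the set of functionals dominated by $g^\uparrow(\bx,\cdot)$, it is the weak$^*$-closed convex set whose support function is the closure of $g^\uparrow(\bx,\cdot)$; in particular $\sdc f_1(\bx)$ is weak$^*$ compact. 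The support function of $\sdc f_1(\bx)+\sdc f_2(\bx)$ is then $f_1^\uparrow(\bx,\cdot)+f_2^\uparrow(\bx,\cdot)$, which dominates $(f_1+f_2)^\uparrow(\bx,\cdot)$; as the sum of a weak$^*$-compact and a weak$^*$-closed convex set is weak$^*$ closed, comparing support functions gives $\sdc(f_1+f_2)(\bx)\subset\sdc f_1(\bx)+\sdc f_2(\bx)$.

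Part~\ref{SR.2} is the delicate one and the step I expect to be the main obstacle, since it is where the Asplund hypothesis is indispensable. The idea is to \emph{decouple} $f_1$ and $f_2$ by a smooth penalty. Fixing $x^*\in\sdf(f_1+f_2)(\bx)$ and $\varepsilon>0$, I would introduce an independent copy $y$ of the variable and study, on $X\times X$ near $(\bx,\bx)$, a function of the form $f_1(x)+f_2(y)-\ang{x^*,x}+\lambda\iv\norm{x-y}^2$ together with a \Fr-smooth localising term; this function is \lsc\ and bounded below, so the \EVP\ (Lemma~\ref{Ekeland}) supplies an approximate minimiser $(x_\lambda,y_\lambda)$. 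At such a point the Lipschitz continuity of $f_1$ and the smoothness of the coupling term allow the optimality condition to split into a relation on $x_\lambda$ involving $\sdf f_1$ and one on $y_\lambda$ involving $\sdf f_2$, tied together by the single-valued derivative of the penalty. The Asplund property is exactly what ensures that the \lsc\ function $f_2$ admits genuine \Fr\ subgradients at points arbitrarily close to $y_\lambda$ (equivalently, that a smooth variational principle is available), so that these split relations are realised by true \Fr\ subgradients. Letting $\lambda\downarrow0$ forces $x_\lambda,y_\lambda\to\bx$ and $f_i(x_i)\to f_i(\bx)$, while the Ekeland slope and the coupling derivative are absorbed into the term $\varepsilon\B^*$, giving the fuzzy decomposition $x^*\in\sdf f_1(x_1)+\sdf f_2(x_2)+\varepsilon\B^*$. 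The genuinely hard points are keeping the cross term under control and guaranteeing \emph{\Fr} (rather than merely limiting) subgradients; both rest on the Asplund assumption, without which the argument degrades to a weaker calculus.
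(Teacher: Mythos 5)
Note first that the paper does not prove Lemma~\ref{SR} at all: it is presented as a compendium of classical results, with the reader referred to the literature (Aubin--Frankowska, Borwein--Zhu, Mordukhovich, Ioffe). So your sketches are being measured against the standard proofs in those references rather than against anything in the paper. Against that benchmark, parts~\ref{SR.1} and~\ref{SR.Clarke} are essentially right. For~\ref{SR.1} you reproduce the Moreau--Rockafellar separation argument correctly. For~\ref{SR.Clarke} your outline is Rockafellar's, but two details are glossed over: the inequality $(f_1+f_2)^\uparrow(\bx,u)\le f_1^\uparrow(\bx,u)+f_2^\uparrow(\bx,u)$ with $f_1$ Lipschitz and $f_2$ merely \lsc\ is a genuine lemma of Rockafellar (the ``$f(x)\le\mu$'' localisation in the definition of $f^\uparrow$ makes it more than a one-line check), and the support-function comparison silently assumes $\sdc f_2(\bx)\ne\es$; when $\sdc f_2(\bx)=\es$ one must argue separately that $\sdc(f_1+f_2)(\bx)=\es$, e.g.\ by writing $f_2=(f_1+f_2)+(-f_1)$ and using the Lipschitz rule already obtained.

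The one substantive gap is in part~\ref{SR.2}, precisely where you anticipated trouble, but the obstruction is not quite the one you name. Your decoupling scheme penalises with $\lambda\iv\norm{x-y}^2$ plus ``a \Fr-smooth localising term'', and the splitting of the optimality condition requires the coupling norm-square to be \Fr\ differentiable. A general Asplund space need not admit any equivalent \Fr-smooth norm or even a \Fr-smooth bump function (smooth renormability fails outside the separable/WCG-type cases), so the penalty you differentiate may simply not be differentiable, and the density of \Fr\ subdifferentiability points alone does not rescue the argument: you need the sub\-gradients produced at $x_\lambda$ and $y_\lambda$ to nearly cancel against a \emph{single-valued} derivative of the coupling term, which is exactly what fails without smoothness. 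The standard repairs, found in the cited references, are either Fabian's separable reduction (prove the fuzzy rule in separable Asplund spaces, where an equivalent \Fr-smooth norm does exist, and show the statement is separably determined) or a proof via the extremal principle / fuzzy minimisation principle, which bypasses smooth penalties altogether. With one of these devices inserted, the rest of your outline --- Ekeland to get $(x_\lambda,y_\lambda)$, Lipschitzness of $f_1$ to control its values and keep $\sdf f_1$ nonempty nearby, the penalty forcing $x_\lambda,y_\lambda\to\bx$ and, together with lower semicontinuity, $f_2(y_\lambda)\to f_2(\bx)$ from above and below --- is the correct standard argument and delivers the stated conclusion, including the value conditions $|f_i(x_i)-f_i(\bx)|<\varepsilon$.
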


\section{Uniform lower semicontinuity}
\label{S3}

\if{
\AK{24/06/23.
Uniform or robust?
I think, `robust' in this context comes from Penot \cite{Pen13}.
It was not used in either the original publications \cite{BorIof96,BorZhu96,Las01} or the subsequent book \cite{BorZhu05}.
In \cite{FabKruMeh24}, we use `uniform'.}
}\fi

From now on, $T$ denotes an arbitrary nonempty index set, which can be infinite.
In this section, we study \emph{uniform} and \emph{firm uniform lower semicontinuity} of a collection of functions $\{f_{t}\}_{t\in T}$ on a metric space $X$.
These properties are crucial for the subdifferential necessary conditions (multiplier rules) and subdifferential calculus in the next section.
The section is a little technical.
It consists of a sequence of simple definitions and assertions building on the corresponding facts from \cite{BorZhu96,BorIof96,Las01,FabKruMeh24}.
The assertions are accompanied by very short proofs whenever necessary.
\if{
\PM{
``I strongly recommend to present the overall paper in the context of Banach
spaces. This alone reduces the technicality of the paper enormously.''}
}\fi

The definitions below involve the family of all finite subsets of indices
\begin{gather}
\label{F(T)}
\mathcal{F}(T):=\{S\subset T\mid {|S|<\infty}\},
\end{gather}
where $|S|$ denotes the \emph{cardinality} (number of elements) of the set $S$.
This family is considered as a \emph{directed set} endowed with the partial order determined by ascending inclusions.
Given a collection
(of extended real numbers) $\{\alpha_{S}\}_{S\in\mathcal{F}(T)} \subset[-\infty,+\infty],$ we use the notations
\begin{align}
\label{al}
\limsup_{S\uparrow T,\;|S|<\infty}\al_S
:=\inf_{S_0\in\mathcal{F}(T)} \sup_{S\in\mathcal{F}(T),\;S_0\subset S}\al_S,\;\;
\liminf_{S\uparrow T,\;|S|<\infty}\al_S
:=\sup_{S_0\in\mathcal{F}(T)} \inf_{\substack{S\in\mathcal{F}(T),\;S_0\subset S}}\al_S
\end{align}
for the, respectively, upper and lower limits of $\{\alpha_{S}\}_{S\in\mathcal{F}(T)}$, and write $\lim_{S\uparrow T}\al_S$ if the
limits are equal.
This is the case, for instance, when
$T$ is finite (hence, $\lim_{S\uparrow T}\al_S=\al_T$) or the mapping $\mathcal{F}(T)\ni S\mapsto\alpha_{S}$
is monotone.
Recall that $S\mapsto\alpha_{S}$ is non-decreasing (non-increasing) if $\alpha_{S_1}\ge\alpha_{S_2}$ ($\alpha_{S_1}\le\alpha_{S_2}$) for all $S_1,S_2\in\mathcal{F}(T)$ with $S_1\subset S_2$.
The mappings $S_0\mapsto\sup_{S\in\mathcal{F}(T),\;S_0\subset S}\al_S$ and $S_0\mapsto\inf_{S\in\mathcal{F}(T),\;S_0\subset S}\al_S$ are non-increasing and non-decreasing, respectively.
Hence, the limits \eqref{al} are well defined, and $\limsup_{S\uparrow T,\;|S|<\infty}\al_S\ge\liminf_{S\uparrow T,\;|S|<\infty}\al_S$.
Given a sequence of real numbers $t_1,t_2,\ldots$, definitions \eqref{al} reduce to the conventional upper and lower limits, respectively, if we set $\al_S:=t_{\max S}$ for any finite subset $S\subset\N$.
\if{
\AK{6/06/24.
Is this terminology acceptable?
I am using it to get rid of $S_0$ in the main statements.}
}\fi

Given a collection of functions $\{f_{t}\}_{t\in T}$, we consider the (upper) sum:
\begin{align}
\label{f}
{\Big(\overline{\sum_{t\in T}}f_t\Big)(x):=}
\limsup_{S\uparrow T,\;|S|<\infty}\; \sum_{t\in S}f_t(x), \quad x\in X.
\end{align}
\if{
Thus,
\begin{align*}
\limsup_{S\uparrow T,\;|S|<\infty} \sum_{t\in S}\al_t =\sup_{S\in\mathcal{F}(T)} \sup_{S'\in\mathcal{F}(T),\;S\subset S'} \sum_{t\in S'}\al_t.
\end{align*}
}\fi
If $\lim_{S\uparrow T,\;|S|<\infty}\; \sum_{t\in S}f_t(x)$ exists
{(in $[-\infty,+\infty]$)}
for all $x\in X$, we write simply {$\sum_{t\in T}f_t$}.

There are other ways of defining sums of infinite collections of functions; see, e.g.,
\emph{robust, limit} and \emph{Lebesgue} sums in \cite{HanJouVic23}.
The approach discussed in this paper is applicable with minor modifications to other definitions
{and, hopefully, also to the suprema of infinite collections; cf. \cite{CorHan24}}.

\if{
\PM{
``Reference [12] is seemingly not even available on the internet
(or hidden carefully). If the paper is not provided as an accessible preprint,
the reference must be removed.''}
}\fi

To simplify the presentation, we assume in the rest of the paper that $\dom\overline\sum_{t\in T}f_t{\ne\es}$ (hence, $\bigcap_{t\in T}\dom f_t\ne\es$), and $\inf\sum_{t\in S}f_t>-\infty$ for all $S\in\mathcal{F}(T)$.

The extensions of definitions \eqref{La0-} and \eqref{Th} to the infinite setting are {defined}
as follows:
\begin{align}
\label{La0}
{\Lambda}(\{f_{t}\}_{t\in T}):=&
\liminf_{S\uparrow T,\;|S|<\infty}\;
\liminf\limits_{\substack{\diam\{x_t\}_{t\in S}\to0}}\; \sum_{t\in S}f_{t}(x_{t}),
\\
\notag
{\Theta}(\{f_t\}_{t\in T})
:=&\limsup_{S\uparrow T,\;|S|<\infty}\; \limsup_{\substack{\diam\{x_t\}_{t\in S}\to0\\ x_t\in\dom f_t\,(t\in S)}}
\\
\label{Th0}
&\hspace{20mm}
\inf_{x\in X} \max\Big\{\max_{t\in S}d(x,x_t),\overline{\sum_{t\in T}}f_t(x)-\sum_{t\in S}f_{t}(x_{t})\Big\}.
\end{align}
They involve the infinite sum \eqref{f}, and the $\liminf$ and $\limsup$ operations \eqref{al} over the directed set \eqref{F(T)}.
We will sometimes write
${\Lambda}_X(\{f_{t}\}_{t\in T})$ and ${\Theta}_X(\{f_t\}_{t\in T})$ to specify the underlying space.
Definitions \eqref{La0} and \eqref{Th0}
obviously reduce to, respectively, \eqref{La0-} and \eqref{Th} when $T$ is finite.
We keep for \eqref{La0} the name \emph{uniform infimum} of $\{f_{t}\}_{t\in T}$.

The definitions below extend the corresponding ones discussed in the Introduction.
\begin{definition}
\label{D3.2}
The collection $\{f_{t}\}_{t\in T}$ is
\begin{enumerate}
\item
\label{D3.2.1}
\emph{uniformly lower semicontinuous} (on $X$) if
$\inf\overline\sum_{t\in T}f_{t} \le{\Lambda}(\{f_{t}\}_{t\in T})$;
\item
\label{D3.2.2}
\emph{firmly uniformly lower semicontinuous} (on $X$) if ${\Theta}(\{f_t\}_{t\in T})=0$.
\end{enumerate}
\end{definition}

By \eqref{f} and \eqref{La0}, we have
\begin{gather}
\label{9}
{\Lambda}(\{f_{t}\}_{t\in T}) \le\liminf_{S\uparrow T,\;|S|<\infty}\inf\sum_{t\in S}f_{t} \le\inf\overline{\sum_{t\in T}}f_{t},
\end{gather}
and consequently, the inequality in Definition~\ref{D3.2}\,\ref{D3.2.1} can only hold as equality.
If $T$ is finite, the second inequality in \eqref{9} holds as equality, while the first one
can  be strict.

\begin{example}
\label{E2.1}
Let $X=\R$, $f_1(x)=1/x$, $f_2(x)=-1/x$ for all $x\ne0$ and $f_1(0)=f_2(0){=+\infty}$.
Then $(f_1+f_2)(x)=0$ for all $x\ne0$ and $(f_1+f_2)(0)=+\infty$.
Hence, $\inf(f_1+f_2)=0$.
Set $x_{1k}:=1/k$ and $x_{2k}:=1/k^2$ for all $k\in\N$.
Then $|x_{1k}-x_{2k}|\to0$ and $f_1(x_{1k})+f_2(x_{2k})=k-k^2\to-\infty$ as $k\to+\infty$.
Hence, $\Lambda(f_1,f_2)=-\infty$.
\end{example}

In view of definitions \eqref{La0} and \eqref{Th0}, we have
\begin{gather*}
{\Theta}(\{f_t\}_{t\in T}) \ge\inf\overline\sum_{t\in T}f_{t} -{\Lambda}(\{f_{t}\}_{t\in T});
\end{gather*}
hence the next proposition holds true.

\begin{proposition}
\label{P3.2}
If $\{f_{t}\}_{t\in T}$ is firmly uniformly lower semicontinuous, then it is uniformly lower semicontinuous.
\end{proposition}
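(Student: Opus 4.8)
The plan is to read the result straight off the inequality
\begin{gather*}
{\Theta}(\{f_t\}_{t\in T}) \ge\inf\overline{\sum_{t\in T}}f_{t} -{\Lambda}(\{f_{t}\}_{t\in T})
\end{gather*}
recorded immediately before the statement. If $\{f_{t}\}_{t\in T}$ is firmly uniformly lower semicontinuous, then ${\Theta}(\{f_t\}_{t\in T})=0$ by Definition~\ref{D3.2}\,\ref{D3.2.2}. Substituting this into the inequality gives $\inf\overline{\sum_{t\in T}}f_{t}\le{\Lambda}(\{f_{t}\}_{t\in T})$, which is exactly Definition~\ref{D3.2}\,\ref{D3.2.1}, i.e. uniform lower semicontinuity. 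So the proposition is one substitution away, and the only genuine content lies in justifying the displayed inequality from definitions \eqref{La0} and \eqref{Th0}.

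To verify that inequality I would fix a finite $S\in\mathcal{F}(T)$ and a configuration $\{x_t\}_{t\in S}$ with $x_t\in\dom f_t$, and work inside the bracketed term of \eqref{Th0}. Since the distance entries $\max_{t\in S}d(x,x_t)$ are nonnegative, the maximum is bounded below by its second argument; moreover $\sum_{t\in S}f_{t}(x_{t})$ is independent of $x$, so taking $\inf_{x\in X}$ yields
\begin{gather*}
\inf_{x\in X}\max\Big\{\max_{t\in S}d(x,x_t),\overline{\sum_{t\in T}}f_t(x)-\sum_{t\in S}f_{t}(x_{t})\Big\}\ge\inf\overline{\sum_{t\in T}}f_t-\sum_{t\in S}f_{t}(x_{t}).
\end{gather*}
Passing to the inner $\limsup$ over $\{x_t\}_{t\in S}$ and using $\limsup(C-g)=C-\liminf g$ with the constant $C:=\inf\overline{\sum_{t\in T}}f_t$ converts the right-hand side into $\inf\overline{\sum_{t\in T}}f_t-\liminf_{\diam\{x_t\}_{t\in S}\to0}\sum_{t\in S}f_{t}(x_{t})$. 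Applying the same $\limsup/\liminf$ rule once more, now over the directed set $\mathcal{F}(T)$ as $S\uparrow T$, reproduces $-{\Lambda}(\{f_{t}\}_{t\in T})$ on the right and $-{\Theta}(\{f_t\}_{t\in T})$ (with sign) on the left, giving the claimed inequality.

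The one point needing care is the mismatch between the two definitions: the inner limit in \eqref{La0} ranges over all configurations, while in \eqref{Th0} it is restricted to $x_t\in\dom f_t$. I would argue this restriction does not alter the inner $\liminf$. Since we assume $\dom\overline{\sum_{t\in T}}f_t\ne\es$ and hence $\bigcap_{t\in T}\dom f_t\ne\es$, every collection of configurations of arbitrarily small diameter contains one with all coordinates at a common point of the intersection, whose (finite) sum is admissible; discarding the remaining configurations, where $\sum_{t\in S}f_{t}(x_{t})=+\infty$, leaves every infimum, and therefore the $\liminf$, unchanged. With this identification the liminf produced by the computation coincides with the one defining ${\Lambda}$ in \eqref{La0}. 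I expect this domain bookkeeping, rather than any analytic difficulty, to be the main (and essentially only) obstacle.
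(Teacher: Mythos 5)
Your proposal is correct and follows exactly the paper's route: the paper derives Proposition~\ref{P3.2} from the displayed inequality ${\Theta}(\{f_t\}_{t\in T})\ge\inf\overline{\sum}_{t\in T}f_{t}-{\Lambda}(\{f_{t}\}_{t\in T})$, which it asserts ``in view of definitions \eqref{La0} and \eqref{Th0}'' without further detail. Your verification of that inequality (bounding the max by its second argument, passing the $\limsup$/$\liminf$ through the constant, and noting that the restriction $x_t\in\dom f_t$ leaves the inner $\liminf$ unchanged since the excluded configurations contribute $+\infty$) is a sound filling-in of the step the paper leaves implicit.
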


In general, the implication in Proposition~\ref{P3.2} is strict; see Example~\ref{nonuniform}.

\begin{remark}
\label{R3.2}
\begin{enumerate}
\item
Employing \eqref{La0-}, definition \eqref{La0} can be rewritten in a shorter form:
\begin{gather}
\label{R3.2-2}
{\Lambda}(\{f_{t}\}_{t\in T})=
\liminf_{S\uparrow T,\;|S|<\infty}\;
{\Lambda}(\{f_t\}_{t\in S}).
\end{gather}
Definition \eqref{Th0} cannot be in general rewritten as the (upper) limit of ${\Theta}(\{f_t\}_{t\in S})$ because the two sums in its \RHS\ are over different index sets.
\item
\label{R3.2.3}
Denote
\begin{gather}
\label{De0}
{\Delta}(\{f_{t}\}_{t\in T}):=
\limsup_{S\uparrow T,\;|S|<\infty}\Big(\inf\sum_{t\in S} f_t- {\Lambda}\big(\{f_t\}_{t\in S}\big)\Big).
\end{gather}
Thanks to our standing conventions, the \RHS\ of \eqref{De0} is well defined.
In view of \eqref{f} and \eqref{R3.2-2}, we have
\begin{gather}
\label{10}
{\Delta}(\{f_{t}\}_{t\in T})\le\inf\overline\sum_{t\in T} f_t- {\Lambda}(\{f_t\}_{t\in T}),
\end{gather}
and consequently, the inequality in Definition~\ref{D3.2}\,\ref{D3.2.1} implies
\begin{gather}
\label{R3.2-4}
{\Delta}(\{f_{t}\}_{t\in T})\le0.
\end{gather}
The latter condition defines a kind of \emph{weak} uniform lower semicontinuity.
If $T$ is finite, inequality \eqref{10} holds as equality, and the two properties coincide.
Observe from \eqref{De0} that ${\Delta}(\{f_{t}\}_{t\in T})\ge0$; hence, the mentioned weak property corresponds to the equality ${\Delta}(\{f_{t}\}_{t\in T})=0$.
\item
\label{R3.2.4}
In view of \eqref{Th} and \eqref{Th0}, we have
\begin{gather}
\label{R3.2-5}
{\Theta}(\{f_{t}\}_{t\in T})\ge
\limsup_{S\uparrow T,\;|S|<\infty}\;
{\Theta}(\{f_t\}_{t\in S}),
\end{gather}
and consequently, the equality in Definition~\ref{D3.2}\,\ref{D3.2.2} implies \begin{gather}
\label{R3.2-6}
\limsup\limits_{S\uparrow T,\;|S|<\infty}\,
{\Theta}(\{f_t\}_{t\in S})=0.
\end{gather}
The latter condition defines a kind of \emph{weak} firm uniform lower semicontinuity.
If $T$ is finite, inequality \eqref{R3.2-5} holds as equality, and the two properties coincide.
\end{enumerate}
\end{remark}

The following estimates are immediate consequences of \eqref{R3.2-2} and \eqref{De0}:
\begin{align}
\label{14}
\liminf_{S\uparrow T,\;|S|<\infty}\inf\sum_{t\in S} f_t\le
{\Lambda}(\{f_t\}_{t\in T})+
{\Delta}(\{f_{t}\}_{t\in T})\le \limsup_{S\uparrow T,\;|S|<\infty}\inf\sum_{t\in S} f_t.
\end{align}
They yield a characterization of the uniform lower semicontinuity.

\begin{proposition}
\label{P3.4}
The collection $\{f_{t}\}_{t\in T}$ is uniformly lower semicontinuous if and only if ${\Delta}(\{f_{t}\}_{t\in T})\le0$ and
$\overline\sum_{t\in T}f_{t}$ is $\inf$-stable
in the sense that
\begin{gather}
\label{P3.4-1}
\inf\overline\sum_{t\in T}f_{t} \le\liminf_{S\uparrow T,\;|S|<\infty}\inf\sum_{t\in S}f_{t}.
\end{gather}
\end{proposition}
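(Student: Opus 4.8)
The plan is to derive both implications directly from the two chains of estimates already in hand, \eqref{9} and \eqref{14}, after sharpening the second inequality in \eqref{9} into a statement about the \emph{upper} limit. Concretely, the first thing I would record is the auxiliary bound
\[
\limsup_{S\uparrow T,\;|S|<\infty}\inf\sum_{t\in S}f_t\le\inf\overline\sum_{t\in T}f_t,
\]
which refines the second inequality of \eqref{9}. It is proved by the same pointwise argument: for every $x\in X$ and every $S\in\mathcal{F}(T)$ one has $\inf\sum_{t\in S}f_t\le\sum_{t\in S}f_t(x)$, so passing to $\sup_{S_0\subset S}$ and then to $\inf_{S_0\in\mathcal{F}(T)}$ and using \eqref{f} and \eqref{al} gives $\limsup_{S\uparrow T}\inf\sum_{t\in S}f_t\le\overline\sum_{t\in T}f_t(x)$; as the left-hand side is independent of $x$, taking the infimum over $x\in X$ yields the claim.

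For the ``if'' part, suppose ${\Delta}(\{f_{t}\}_{t\in T})\le0$ and that \eqref{P3.4-1} holds. The left inequality in \eqref{14} combined with ${\Delta}(\{f_{t}\}_{t\in T})\le0$ gives
\[
\liminf_{S\uparrow T,\;|S|<\infty}\inf\sum_{t\in S}f_t\le{\Lambda}(\{f_{t}\}_{t\in T})+{\Delta}(\{f_{t}\}_{t\in T})\le{\Lambda}(\{f_{t}\}_{t\in T}),
\]
and chaining this with \eqref{P3.4-1} yields $\inf\overline\sum_{t\in T}f_t\le{\Lambda}(\{f_{t}\}_{t\in T})$, i.e. uniform lower semicontinuity in the sense of Definition~\ref{D3.2}\,\ref{D3.2.1}.

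For the converse, assume $\{f_{t}\}_{t\in T}$ is uniformly lower semicontinuous, so $\inf\overline\sum_{t\in T}f_t\le{\Lambda}(\{f_{t}\}_{t\in T})$. Inserting this into \eqref{9} forces equality throughout, whence $\inf\overline\sum_{t\in T}f_t={\Lambda}(\{f_{t}\}_{t\in T})=\liminf_{S\uparrow T}\inf\sum_{t\in S}f_t$; the first of these equalities in particular gives the inf-stability \eqref{P3.4-1}. To obtain ${\Delta}(\{f_{t}\}_{t\in T})\le0$ I would now bring in the auxiliary bound: it gives $\limsup_{S\uparrow T}\inf\sum_{t\in S}f_t\le\inf\overline\sum_{t\in T}f_t={\Lambda}(\{f_{t}\}_{t\in T})$, while the right inequality in \eqref{14} reads ${\Lambda}(\{f_{t}\}_{t\in T})+{\Delta}(\{f_{t}\}_{t\in T})\le\limsup_{S\uparrow T}\inf\sum_{t\in S}f_t$; together these force ${\Lambda}(\{f_{t}\}_{t\in T})+{\Delta}(\{f_{t}\}_{t\in T})\le{\Lambda}(\{f_{t}\}_{t\in T})$, i.e. ${\Delta}(\{f_{t}\}_{t\in T})\le0$.

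The only genuinely non-formal step is the auxiliary $\limsup$-bound, and that is where I expect the main difficulty to lie: since $S\mapsto\inf\sum_{t\in S}f_t$ need not be monotone, one cannot trade $\liminf$ for $\limsup$ for free, and the estimate must be obtained pointwise in $x$ before taking the infimum, exactly as above. Everything else is bookkeeping with \eqref{9} and \eqref{14}. The standing assumptions $\dom\overline\sum_{t\in T}f_t\ne\es$ and $\inf\sum_{t\in S}f_t>-\infty$ for $S\in\mathcal{F}(T)$ ensure that the differences defining ${\Delta}(\{f_{t}\}_{t\in T})$ in \eqref{De0} never take the indeterminate form $+\infty-(+\infty)$, so all the extended-real manipulations above are legitimate.
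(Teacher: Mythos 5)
Your proof is correct and follows essentially the same route as the paper: the ``if'' direction via the first inequality in \eqref{14} chained with \eqref{P3.4-1}, and the ``only if'' direction via the second inequality in \eqref{14} together with the bound $\limsup_{S\uparrow T,\;|S|<\infty}\inf\sum_{t\in S}f_t\le\inf\overline\sum_{t\in T}f_t$ and forcing equalities in \eqref{9}. The only difference is presentational: you prove that auxiliary $\limsup$-bound explicitly, whereas the paper uses it tacitly and justifies the pointwise argument only in the remark following the proposition (for general families $\varphi_S$).
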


\begin{proof}
Let ${\Delta}(\{f_{t}\}_{t\in T})\le0$ and
condition \eqref{P3.4-1} be satisfied.
Using the first inequality in \eqref{14}, we obtain
\begin{gather*}
\inf\overline{\sum_{t\in T}} f_t- {\Lambda}(\{f_t\}_{t\in T})\le
\liminf_{S\uparrow T,\;|S|<\infty}\inf\sum_{t\in S}f_{t}- {\Lambda}(\{f_t\}_{t\in T})\le
{\Delta}(\{f_{t}\}_{t\in T})\le0;
\end{gather*}
hence, $\{f_{t}\}_{t\in T}$ is uniformly lower semicontinuous.
Conversely, let $\{f_{t}\}_{t\in T}$ be uniformly lower semicontinuous.
It follows from the second inequality in \eqref{14} that
\begin{align*}
{\Delta}(\{f_{t}\}_{t\in T})\le \limsup_{S\uparrow T,\;|S|<\infty}\inf\sum_{t\in S} f_t-
{\Lambda}(\{f_t\}_{t\in T}) \le\inf\overline{\sum_{t\in T}} f_t-
{\Lambda}(\{f_t\}_{t\in T})\le0.
\end{align*}
Furthermore, the inequalities in \eqref{9} must hold as equalities, which implies \eqref{P3.4-1}.
\qed\end{proof}

\begin{remark}
The
{$\inf$-stability of the infinite sum}
property which plays an important role in Proposition~\ref{P3.4} is a particular case of a more general property defined for an arbitrary family of functions $\varphi_{S}:X\to\R_{\infty}$ $(S\in\mathcal{F}(T))$ and the corresponding $\limsup$ function
\begin{align}
\label{tf}
\varphi_\infty(x):=\limsup_{S\uparrow T,\;|S|<\infty} \varphi_S(x), \quad x\in X,
\end{align}
by the inequality
\begin{align}
\label{R3.5-1}
\inf\varphi_\infty\le\liminf_{S\uparrow T,\;|S|<\infty}\inf\varphi_S.
\end{align}
\if{
\AK{25/07/23.
Is this definition meaningful in infinite settings?
}
\AK{6/08/23.
Could it make sense reformulating the definition for the case of Lebesgue sums?}
\AK{10/08/23.
Some ``calculus'' of
\red{$\inf$-stability}
is needed.}
}\fi
Inequality \eqref{R3.5-1} (hence, also \eqref{P3.4-1})
can only hold as equality since the opposite inequality is always true.
Indeed, for any $x\in X$, in view of \eqref{tf}, we have
\begin{align*}
\liminf_{S\uparrow T,\;|S|<\infty}\inf\varphi_S \le\limsup_{S\uparrow T,\;|S|<\infty}\inf\varphi_S\le \limsup_{S\uparrow T,\;|S|<\infty}\varphi_S(x)=\varphi_\infty(x).
\end{align*}
Taking infimum over $x\in X$, we arrive at $\liminf_{S\uparrow T,\;|S|<\infty}\inf \varphi_S\le\inf\varphi_\infty$.

If $T$ is finite, then \eqref{R3.5-1} (hence, also \eqref{P3.4-1}) is automatically satisfied because ${T\in\mathcal{F}(T)}$ and $\varphi_\infty=\varphi_T$.
In general, it can be violated;
see \cite[p.~239]{RocWet98}.

If condition \eqref{R3.5-1} is satisfied and the infimum in the \LHS\ is attained at some point $\bx$, then, by~\eqref{tf},
$\limsup_{S\uparrow T,\;|S|<\infty}\varphi_S(\bx) =\varphi_\infty(\bx)\le\liminf_{S\uparrow T,\;|S|<\infty}\varphi_S(\bx)$, and consequently,
$\varphi_\infty(\bx)=\lim_{S\uparrow T,\;|S|<\infty}\varphi_S(\bx)$.
\end{remark}


Next, we provide sufficient conditions for the uniform and firm uniform lower semicontinuity properties,
some of which
extend the corresponding results established in \cite{FabKruMeh24} for the case $T=\{1,2\}$.

The next proposition is an immediate consequence of Definition~\ref{D3.2}\,\ref{D3.2.2}.
\begin{proposition}
Let $U_t\subset X$ $(t\in T)$.
If $\bigcap_{t\in T}U_t\ne\es$, then the family of indicator functions $\{i_{U_t}\}_{t\in T}$ is uniformly lower semicontinuous.
\end{proposition}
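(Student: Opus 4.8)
The plan is to check the defining inequality of Definition~\ref{D3.2}\,\ref{D3.2.1} for the collection $\{i_{U_t}\}_{t\in T}$ directly, that is, to prove $\inf\overline\sum_{t\in T}i_{U_t}\le{\Lambda}(\{i_{U_t}\}_{t\in T})$. Rather than computing either side exactly, I would bound the left-hand side from above by $0$ and the right-hand side from below by $0$; since $0\le0$, this is all that the definition requires.

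For the upper bound, I would exploit the nonemptiness hypothesis: fix any $\bx\in\bigcap_{t\in T}U_t$. Then $\bx\in U_t$, hence $i_{U_t}(\bx)=0$, for every $t\in T$, so the partial sums in \eqref{f} satisfy $\sum_{t\in S}i_{U_t}(\bx)=0$ for all $S\in\mathcal{F}(T)$. Taking the upper limit over $S\uparrow T$ gives $\big(\overline{\sum_{t\in T}}i_{U_t}\big)(\bx)=0$, whence $\inf\overline\sum_{t\in T}i_{U_t}\le0$ (this also confirms $\bx\in\dom\overline\sum_{t\in T}i_{U_t}\ne\es$, so the standing assumptions hold). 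For the lower bound, I would use that indicator functions are nonnegative: for every $S\in\mathcal{F}(T)$ and every collection $\{x_t\}_{t\in S}$ one has $\sum_{t\in S}i_{U_t}(x_t)\ge0$, each summand being $0$ or $+\infty$. Hence the inner lower limit in \eqref{La0} is $\ge0$ for each $S$, and so is the outer one, giving ${\Lambda}(\{i_{U_t}\}_{t\in T})\ge0$. Combining, $\inf\overline\sum_{t\in T}i_{U_t}\le0\le{\Lambda}(\{i_{U_t}\}_{t\in T})$, which is exactly uniform lower semicontinuity.

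There is no genuinely hard step here: both inequalities follow from the two-valued, nonnegative nature of indicators together with the existence of a common point of the $U_t$. The only things needing a little care are the passages to the directed-set limits in \eqref{f} and \eqref{La0}, which are harmless because $S\mapsto\sum_{t\in S}i_{U_t}(x)$ is non-decreasing and nonnegative, and the bookkeeping of the value $+\infty$. I would stress that the same argument does \emph{not} deliver the firm property of Definition~\ref{D3.2}\,\ref{D3.2.2}: in \eqref{Th0} the auxiliary point $x$ must lie in $\bigcap_{t\in T}U_t$ to keep $\overline\sum_{t\in T}i_{U_t}(x)$ finite while staying close to clustered points $x_t\in U_t$, and this can fail when the sets $U_t$ are not closed; so in general only the weaker property in Definition~\ref{D3.2}\,\ref{D3.2.1} can be expected.
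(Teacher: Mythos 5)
Your proof is correct and is exactly the argument the paper has in mind: the paper states the proposition without proof as an ``immediate consequence'' of the definition, and the intended reasoning is precisely your two bounds --- $\Lambda(\{i_{U_t}\}_{t\in T})\ge0$ since indicator functions are nonnegative, and $\inf\overline\sum_{t\in T}i_{U_t}\le0$ by evaluating at a common point $\bx\in\bigcap_{t\in T}U_t$. Your closing remark that the argument does not yield the \emph{firm} property is also consistent with the paper, which instead refers to \cite[Proposition~5.13\,(i)]{FabKruMeh24} for a characterization of firm uniform lower semicontinuity of indicator families.
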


We refer to \cite[Proposition 5.13\,(i)]{FabKruMeh24} for a characterization of firm uniform lower semicontinuity of a finite family of indicator functions.
The situation when only some of the functions in a collection are indicator functions can also be of interest as it models constrained optimization problems.
Some sufficient conditions for uniform lower semicontinuity in the finite setting can be found in \cite{KruMeh22,FabKruMeh24}.

The next proposition shows that adding a constant function to a given collection does not affect its uniform and firm uniform lower semicontinuity properties.

\begin{proposition}
\label{P2.6}
Let $t_0\notin T$, $f_{t_0}(x):=c$ for some $c\in\R$ and all $x\in X$.
If $\{f_t\}_{t\in T}$ is uniformly (firmly uniformly) lower semicontinuous, then $\{f_t\}_{t\in T\cup\{t_0\}}$ is uniformly (firmly uniformly) lower semicontinuous.
\end{proposition}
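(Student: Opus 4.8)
The plan is to show that adjoining the constant function $f_{t_0}\equiv c$ shifts both $\inf\overline{\sum_{t\in T}}f_t$ and $\Lambda(\{f_t\}_{t\in T})$ by the same amount $c$, so the defining inequality of uniform lower semicontinuity is preserved, while it leaves $\Theta(\{f_t\}_{t\in T})$ completely unchanged, so firm uniform lower semicontinuity is preserved too. The common device is that the finite sets $S\in\mathcal F(T\cup\{t_0\})$ with $t_0\in S$ are cofinal in $\mathcal F(T\cup\{t_0\})$, so every outer $\liminf$/$\limsup$ over $S\uparrow T\cup\{t_0\}$ may be computed using only such $S$; writing $S'=S\setminus\{t_0\}$, the sets $S'$ then range cofinally over $\mathcal F(T)$. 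Since $\dom f_{t_0}=X$, the variable $x_{t_0}$ is unconstrained, and since $f_{t_0}(x_{t_0})=c$ for every $x_{t_0}$, it contributes only the additive constant $c$ to $\sum_{t\in S}f_t(x_t)$ and imposes no domain restriction.

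For the uniform case I would first record, using \eqref{f} and cofinality, that $\overline{\sum_{t\in T\cup\{t_0\}}}f_t(x)=c+\overline{\sum_{t\in T}}f_t(x)$ for all $x$, hence $\inf\overline{\sum_{t\in T\cup\{t_0\}}}f_t=c+\inf\overline{\sum_{t\in T}}f_t$. Next, for fixed $S\ni t_0$ the map $\{x_t\}_{t\in S}\mapsto\sum_{t\in S}f_t(x_t)=c+\sum_{t\in S'}f_t(x_t)$ is independent of $x_{t_0}$; since any collection $\{x_t\}_{t\in S'}$ of diameter $<\delta$ extends to one over $S$ of the same diameter (take $x_{t_0}=x_{t_1}$ for some $t_1\in S'$) and conversely restricts, the two inner infima over diameters $<\delta$ agree, so the inner $\liminf$ equals $c+\Lambda(\{f_t\}_{t\in S'})$. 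Passing to the outer $\liminf$ over $S\uparrow T\cup\{t_0\}$ and invoking the short form \eqref{R3.2-2} gives $\Lambda(\{f_t\}_{t\in T\cup\{t_0\}})=c+\Lambda(\{f_t\}_{t\in T})$. Adding $c$ to the uniform lower semicontinuity inequality for $\{f_t\}_{t\in T}$ then delivers the one for $\{f_t\}_{t\in T\cup\{t_0\}}$.

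For the firm case the crucial observation is that in the bracket of \eqref{Th0} the constant $c$ occurs in both $\overline{\sum_{t\in T\cup\{t_0\}}}f_t(x)=c+\overline{\sum_{t\in T}}f_t(x)$ and $\sum_{t\in S}f_t(x_t)=c+\sum_{t\in S'}f_t(x_t)$, hence it cancels in the difference, leaving $\overline{\sum_{t\in T\cup\{t_0\}}}f_t(x)-\sum_{t\in S}f_t(x_t)=\overline{\sum_{t\in T}}f_t(x)-\sum_{t\in S'}f_t(x_t)$. The only residual discrepancy between the $T\cup\{t_0\}$ and $T$ versions is that the distance term reads $\max_{t\in S}d(x,x_t)$ instead of $\max_{t\in S'}d(x,x_t)$. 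Here I would use that for any $t_1\in S'$ one has $d(x,x_{t_0})\le d(x,x_{t_1})+\diam\{x_t\}_{t\in S}$, whence $\max_{t\in S'}d(x,x_t)\le\max_{t\in S}d(x,x_t)\le\max_{t\in S'}d(x,x_t)+\diam\{x_t\}_{t\in S}$; consequently the two values of the $\max\{\cdot,\cdot\}$ expression, and so of their infima over $x$, differ by at most $\diam\{x_t\}_{t\in S}$. Taking the inner $\limsup$ as $\diam\{x_t\}_{t\in S}\to0$ annihilates this extra term, and the same restriction/extension correspondence between collections over $S$ and over $S'$ makes the two inner limsups equal; the outer $\limsup$ over cofinal $S\ni t_0$, identified via $S'$ with $\mathcal F(T)$, then yields $\Theta(\{f_t\}_{t\in T\cup\{t_0\}})=\Theta(\{f_t\}_{t\in T})=0$.

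The hard part will be the distance term: unlike the sum, where $c$ cancels exactly, the added index $t_0$ genuinely enlarges $\max_{t\in S}d(x,x_t)$, and one must verify that this enlargement is fully absorbed by the vanishing diameter and does not persist in the limit. The accompanying book-keeping of the two nested directed limits — matching collections over $S$ with their restrictions and extensions over $S'$ uniformly in $\delta$ — is where attention is required, even though each individual estimate is elementary.
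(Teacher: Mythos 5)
Your proposal is correct and follows essentially the same route as the paper's proof, which likewise establishes the three identities $\overline{\sum}_{t\in T\cup\{t_0\}}f_t=\overline{\sum}_{t\in T}f_t+c$, ${\Lambda}(\{f_{t}\}_{t\in T\cup\{t_0\}})={\Lambda}(\{f_{t}\}_{t\in T})+c$ and ${\Theta}(\{f_{t}\}_{t\in T\cup\{t_0\}})={\Theta}(\{f_{t}\}_{t\in T})$ via the correspondence $S\leftrightarrow S\cup\{t_0\}$ between finite subsets. The only difference is that you make explicit the details the paper leaves implicit — the cofinality of the sets containing $t_0$, the duplicate-point extension preserving diameters, and the absorption of the extra term $d(x,x_{t_0})$ by the vanishing diameter — all of which are handled correctly.
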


\begin{proof}
Observe that $S'\in\mathcal{F}(T\cup\{t_0\})$ if and only if $S'\setminus\{t_0\}\in\mathcal{F}(T)$.
For any $S\in\mathcal{F}(T)$, $x\in X$ and $\{x_t\}_{t\in S\cup\{t_0\}}\subset X$, we have $\sum_{t\in S\cup\{t_0\}}f_t(x)=\sum_{t\in S}f_t(x)+c$ and $\sum_{t\in S\cup\{t_0\}}f_t(x_t)=\sum_{t\in S}f_t(x_t)+c$.
As a consequence, $\overline{\sum}_{t\in T\cup\{t_0\}}f_t=\overline{\sum}_{t\in T}f_t+c$, ${\Lambda}(\{f_{t}\}_{t\in T\cup\{t_0\}})= {\Lambda}(\{f_{t}\}_{t\in T})+c$ and
${\Theta}(\{f_{t}\}_{t\in T\cup\{t_0\}})= {\Theta}(\{f_{t}\}_{t\in T})$,
and the assertions follow.
\qed\end{proof}

\begin{remark}
Similar facts are also true for the weaker versions of uniform lower semicontinuity and firm uniform lower semicontinuity mentioned in Remark~\ref{R3.2}\,\ref{R3.2.3} and \ref{R3.2.4}, respectively, as well as the
{$\inf$-stability}
property defined by \eqref{P3.4-1}.
\end{remark}


The firm uniform lower semicontinuity property in Definition~\ref{D3.2}\,\ref{D3.2.2} is stable under uniformly continuous perturbations of one of the functions.

\begin{proposition}
\label{P3.7}
Suppose that $\{f_{t}\}_{t\in T}$ is firmly uniformly lower semicontinuous and $g\colon X\to\R$ is uniformly continuous.
Let $t_0\in T$.
Set $\tilde f_{t_0}:=f_{t_0}+g$ and $\tilde f_{t}:=f_{t}$ for all $t\in T\setminus\{t_0\}$.
Then the collection $\{\tilde f_t\}$ is firmly uniformly lower semicontinuous.
\end{proposition}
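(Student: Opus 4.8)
The plan is to establish the firm uniform lower semicontinuity of $\{\tilde f_t\}$ by showing $\Theta(\{\tilde f_t\}_{t\in T})\le\eps$ for every $\eps>0$. Since the distance term $\max_{t\in S}d(x,x_t)$ inside the maximum in \eqref{Th0} is always nonnegative and $S\ne\es$, one has $\Theta(\{\tilde f_t\}_{t\in T})\ge0$, so these two facts will force the value to be exactly $0$, i.e.\ the desired property.

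First I would record two elementary identities. As $g$ is finite-valued, $\dom\tilde f_t=\dom f_t$ for every $t\in T$, so the admissible collections $\{x_t\}$ in \eqref{Th0} coincide for the two families. Moreover, restricting the directed set to the cofinal family of finite $S\ni t_0$, one has $\sum_{t\in S}\tilde f_t(x)=\sum_{t\in S}f_t(x)+g(x)$, and since $g(x)$ is a finite constant with respect to $S$ it passes through the $\limsup$ in \eqref{f}, giving $\overline{\sum_{t\in T}}\tilde f_t(x)=\overline{\sum_{t\in T}}f_t(x)+g(x)$ for all $x\in X$.

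Next comes the core estimate. Fix $\eps>0$ and, by uniform continuity of $g$, pick $\de>0$ with $|g(x)-g(y)|<\eps/2$ whenever $d(x,y)<\de$. For a finite $S\ni t_0$ and an admissible $\{x_t\}_{t\in S}$, write
\[
\Phi_S^f(\{x_t\}):=\inf_{x\in X}\max\Big\{\max_{t\in S}d(x,x_t),\ \overline{\sum_{t\in T}}f_t(x)-\sum_{t\in S}f_t(x_t)\Big\},
\]
and define $\Phi_S^{\tilde f}$ analogously. By the identities above, the second entry of the maximum for $\tilde f$ equals that for $f$ plus the term $g(x)-g(x_{t_0})$. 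I would then prove: if $\Phi_S^f(\{x_t\})<\de$, then $\Phi_S^{\tilde f}(\{x_t\})\le\Phi_S^f(\{x_t\})+\eps/2$. The mechanism is that any $x$ nearly attaining the infimum defining $\Phi_S^f$ satisfies $d(x,x_{t_0})\le\max_{t\in S}d(x,x_t)<\de$ (because that distance sits inside the very maximum being minimized), hence $|g(x)-g(x_{t_0})|<\eps/2$; combined with the elementary inequality $\max\{a,b+c\}\le\max\{a,b\}+|c|$, evaluating the $\tilde f$-maximum at this $x$ yields the claim.

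Finally I would push this estimate through the two nested limits in \eqref{al}. Since $\Theta(\{f_t\}_{t\in T})=0<\min\{\eps/2,\de\}$, there is $S_0\in\mathcal F(T)$, which I enlarge so that $t_0\in S_0$, with
\[
\sup_{S\supseteq S_0}\ \limsup_{\substack{\diam\{x_t\}_{t\in S}\to0\\ x_t\in\dom f_t\,(t\in S)}}\Phi_S^f(\{x_t\})<\min\{\eps/2,\de\}.
\]
For each such $S$ the inner $\limsup$ is $<\de$, so on a cofinal range of small diameters $\Phi_S^f<\de$ and the core estimate applies; passing to the $\limsup$ as $\diam\to0$ (using monotonicity of the supremum over shrinking diameters) gives $\limsup_{\diam\to0}\Phi_S^{\tilde f}\le\limsup_{\diam\to0}\Phi_S^f+\eps/2$. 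Taking the supremum over $S\supseteq S_0$, then recalling that $\Theta(\{\tilde f_t\}_{t\in T})$ is the infimum of such suprema over $S_0$, yields $\Theta(\{\tilde f_t\}_{t\in T})\le\min\{\eps/2,\de\}+\eps/2\le\eps$, as required. The hard part will be precisely this last bookkeeping: the core inequality holds only where $\Phi_S^f<\de$, and one must check it survives both the $\limsup$ over $\diam\to0$ and the $\limsup$ over $S\uparrow T$. This is exactly why $\de$ is chosen strictly above $\Theta(\{f_t\}_{t\in T})=0$, which secures a cofinal family of sets $S$ and a radius of small diameters on which the uniform-continuity correction $\eps/2$ can be added uniformly before taking the limits.
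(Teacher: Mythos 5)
Your proof is correct and follows essentially the same route as the paper's: both exploit the distance term $d(x,x_{t_0})$ inside the maximum to place the near-minimizer $x$ within the modulus of uniform continuity of $g$, so that the sum identity $\overline{\sum}_{t\in T}\tilde f_t(x)-\sum_{t\in S}\tilde f_t(x_t)=\overline{\sum}_{t\in T}f_t(x)-\sum_{t\in S}f_t(x_t)+g(x)-g(x_{t_0})$ yields the $\eps/2$-perturbed bound, with $S_0$ enlarged to force $t_0\in S$. The paper writes the same mechanism directly in $\eps$--$\de$ form from definition \eqref{Th0}, while you package it via the quantities $\Phi_S^f$ and track the limsups explicitly; the extra bookkeeping is sound but not a different idea.
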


\begin{proof}
Let $\eps>0$.
Then there exists a $\rho>0$ such that $|g(x')-g(x'')|<\eps/2$ for all $x',x''\in X$ with $d(x',x'')<\rho$.
Set $\eps':=\min\{\eps/2,\rho\}$.
By \eqref{Th0}, for any $S_0\in\mathcal{F}(T)$, there exist an $S\in\mathcal{F}(T)$ with $S_0\cup\{t_0\}\subset S$ and a $\de>0$ such that, for any $x_{t}\in\dom f_t$
$(t\in S)$, with $\diam\{x_{t}\}_{t\in S}<\de$, one can find an $x\in X$ such that
\begin{align}
\label{P3.7P1}
\max_{t\in S}d(x,x_{t})<\eps'<\eps
\AND
\overline{\sum_{t\in T}}f_t(x)-\sum_{t\in S}f_{t}(x_{t})< \eps'\le\eps/2.
\end{align}
Then $d(x,x_{t_0})<\eps'\le\rho$.
Observe that $\dom\tilde f_t=\dom f_t$ for all $t\in S$, and
\begin{align}
\label{P3.7P2}
\overline{\sum_{t\in T}}\tilde f_t(x)-\sum_{t\in S}\tilde f_{t}(x_{t})=\overline{\sum_{t\in T}}f_t(x)-\sum_{t\in S}f_{t}(x_{t})+g(x)-g(x_{t_0})<\eps.
\end{align}
Thus, $\{\tilde f_t\}$ is firmly uniformly
lower semicontinuous.
\qed
\end{proof}

\if{
\AK{7/07/23.
The assertion can be easily extended to the case of a finite number of uniformly continuous perturbations.
One can even consider perturbations of all functions, but for that one needs to assume a kind of ``uniform (over $t$) uniform continuity''. :-)
I am not sure if it worth to introduce another definition.
}
}\fi

Thanks to Proposition~\ref{P2.6}, the next statement is an immediate consequence of Proposition~\ref{P3.7}.

\begin{corollary}
\label{C2.1}
Suppose that $\{f_t\}_{t\in T}$ is firmly uniformly
lower semicontinuous, $t_0\notin T$, $f_{t_0}\colon X\to\R$ is uniformly continuous.
Then the collection $\{f_t\}_{t\in T\cup\{t_0\}}$ is firmly uniformly
lower semicontinuous.
\end{corollary}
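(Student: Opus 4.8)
The plan is to reduce the corollary to Propositions~\ref{P2.6} and \ref{P3.7} by first inserting an auxiliary \emph{constant} function at the new index $t_0$ and then recovering $f_{t_0}$ as a uniformly continuous perturbation of that constant. The key obstacle to keep in mind is that Proposition~\ref{P3.7} perturbs a function that is \emph{already present} in the collection (it requires $t_0\in T$), whereas the corollary adjoins a genuinely new index $t_0\notin T$. Hence Proposition~\ref{P3.7} does not apply directly; I must first bring $t_0$ inside the index set in a way that preserves firm uniform lower semicontinuity, and the natural device for that is a constant function, which is exactly what Proposition~\ref{P2.6} handles.

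First I would fix any constant $c\in\R$ and define the auxiliary collection $\{h_t\}_{t\in T\cup\{t_0\}}$ by $h_t:=f_t$ for $t\in T$ and $h_{t_0}:=c$. Since $\{f_t\}_{t\in T}$ is firmly uniformly lower semicontinuous and $t_0\notin T$, Proposition~\ref{P2.6} yields that $\{h_t\}_{t\in T\cup\{t_0\}}$ is firmly uniformly lower semicontinuous. Now $t_0$ lies inside the index set $T\cup\{t_0\}$, so I am in a position to invoke Proposition~\ref{P3.7} with distinguished index $t_0$.

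Next I would set $g:=f_{t_0}-c$. Because $f_{t_0}\colon X\to\R$ is uniformly continuous and subtracting a constant preserves both real-valuedness and uniform continuity, $g\colon X\to\R$ is uniformly continuous. Applying Proposition~\ref{P3.7} to the collection $\{h_t\}_{t\in T\cup\{t_0\}}$ with the perturbation $g$ at index $t_0$, I obtain that the perturbed collection $\{\tilde h_t\}$ with $\tilde h_{t_0}:=h_{t_0}+g=c+(f_{t_0}-c)=f_{t_0}$ and $\tilde h_t:=h_t=f_t$ for $t\in T$ is firmly uniformly lower semicontinuous. But this collection is precisely $\{f_t\}_{t\in T\cup\{t_0\}}$, which is the claim.

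There is essentially no hard computation here once the two-stage routing is seen; the only points to verify are that $g=f_{t_0}-c$ is genuinely real-valued and uniformly continuous (so Proposition~\ref{P3.7} applies verbatim) and that composing the two operations returns exactly the target collection. The main conceptual obstacle — and the reason the statement does not follow from Proposition~\ref{P3.7} alone — is recognising that Proposition~\ref{P3.7} can only modify an existing function, not create a new index, so Proposition~\ref{P2.6} must first supply the placeholder constant at $t_0$.
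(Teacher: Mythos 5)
Your proposal is correct and matches the paper's own argument: the paper likewise proves the corollary by first adjoining a constant function at the new index $t_0$ via Proposition~\ref{P2.6} and then recovering $f_{t_0}$ as a uniformly continuous perturbation of that constant via Proposition~\ref{P3.7}. The only cosmetic simplification available is to take $c=0$, but your version with arbitrary $c\in\R$ is equally valid.
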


\begin{remark}
\begin{enumerate}
\item
Facts similar to those in Proposition~\ref{P3.7} and Corollary~\ref{C2.1} are also true for the weaker version of firm uniform lower semicontinuity mentioned in Remark~\ref{R3.2}\,\ref{R3.2.3}.
\item
The assumption that the uniform lower semicontinuity is firm plays an important role in the proof of Proposition~\ref{P3.7}.
The analogues of Proposition~\ref{P3.7} and Corollary~\ref{C2.1} do not necessarily hold for the
{non-firm}
uniform lower semicontinuity property (see Example~\ref{nonuniform} below).
\end{enumerate}
\end{remark}

\begin{example}
\label{nonuniform}
Let $X:=\R^3$, $x:=(u_1,u_2,u_3)$,
$$
f_1(x):=
\begin{cases}
0& \text{if }u_1\ne0,\;\frac1{u_1}-u_2\le0,\\
+\infty& \text{otherwise},
\end{cases}
\;\;
f_2(x):=
\begin{cases}
0& \text{if }u_1\ne0,\;\frac1{u_1}+u_3\ge0,\\
+\infty& \text{otherwise}.
\end{cases}
$$
Then $\inf(f_1+f_2)=\Lambda(f_1,f_2)=0$, i.e., the pair $\{f_1,f_2\}$ is uniformly lower semicontinuous.
Suppose it is firmly uniformly lower semicontinuous, i.e., $\Theta(f_1,f_2)=0$.
Take $x_{1k}:=(1/k,k,-k^2)\in\dom f_1$ and $x_{2k}:=(1/k^2,k,-k^2)\in\dom f_2$ $(k\in\N)$.
We obviously have $f_1(x_{1k})=f_2(x_{2k})=0$ $(k\in\N)$ and $\|x_{1k}-x_{2k}\|\to0$ as $k\to+\infty$.
Let $x_k:=(\al_k,\be_k,\ga_k)\in\dom(f_1+f_2)$ $(k\in\N)$.
Then $(f_1+f_2)(x_{k})=0$ $(k\in\N)$ and, by definition \eqref{Th0}, $\|x_{k}-x_{1k}\|\to0$ as $k\to+\infty$.
Thus, $\al_k\ne0$, $1/\al_k-\be_k\le0$, $1/\al_k+\ga_k\ge0$ $(k\in\N)$, $\al_k-1/k\to0$,
$\be_k-k\to0$ and $\ga_k+k^2\to0$ as $k\to+\infty$, and consequently, $\be_k+\ga_k\ge0$ for all $k\in\N$, and $\be_k+\ga_k+k^2-k\to0$ as $k\to+\infty$, which is impossible.
Hence, $\Theta(f_1,f_2)>0$, i.e., $\{f_1,f_2\}$ is not firmly uniformly lower semicontinuous.

Adding a third (even linear) function to the pair $\{f_1,f_2\}$ destroys its uniform lower semicontinuity.
Set $f_3(x):=u_2+u_3$.
Then $f_3(x)\ge0$ for all $x\in\dom f_1\cap\dom f_2$, and $\inf(f_1+f_2+f_3)=f_3(1,1,-1)=0$.
Take $x_{3k}:=(0,k,-k^2)$ $(k\in\N)$.
Then $\diam\{x_{1k},x_{2k},x_{3k}\}{\to0}$ and $f_1(x_{1k})+f_2(x_{2k})+f_3(x_{3k})=k-k^2\to-\infty$ as $k\to+\infty$.
Hence, $\Lambda(f_1,f_2,f_3)=-\infty<\inf(f_1+f_2+f_3)$, i.e., the triple $\{f_1,f_2,f_3\}$ is not uniformly lower semicontinuous.
\end{example}
\if{
\olive{\begin{example}
Given two proper lower semicontinuous functions $f_{1},$ $f_{2}:X\rightarrow \mathbb{R}_{\infty }$, we suppose that
$\mathrm{dom}f_1\cap \mathrm{dom}f_2\ne \emptyset$, and the collection $\{f_1,\ f_2\}$ is not uniformly lower semicontinuous on $X$; that is,
$$
\inf_{X}f_{1}+f_{2} > \Lambda (\{f_{1},\ f_{2}\}).
$$
Next, we verify that
\begin{equation*}
\inf_{X}f_{1}+f_{2}=\inf_{X\times \mathbb{R}^{2}}f+g+h,\quad \Lambda(f_{1},f_{2})=\Lambda(f,g,h),
\end{equation*}
where $f,$ $g,$ $h:X\times \mathbb{R}^{2}\rightarrow \mathbb{R}_{\infty }$
are the lower semicontinuous functions defined as
\begin{equation*}
f(x,\lambda _{1},\lambda _{2}):=\lambda _{1}+\lambda _{2},\text{ }%
g(x,\lambda _{1},\lambda _{2}):=\mathrm{I}_{\mathrm{epi}f_{1}}(x,\lambda
_{1}),\text{ }h(x,\lambda _{1},\lambda _{2}):=\mathrm{I}_{\mathrm{epi}%
f_{2}}(x,\lambda _{2}).
\end{equation*}
Observe that the function $f$ is linear (and continuous).
So,
$$
\inf_{X\times \mathbb{R}^{2}}f+g+h > \Lambda(f,g,h),
$$
and the collection $\{f,g,h\}$ is not uniformly lower semicontinuous on $X$. However, the subcollection $\{g,h\}$ is uniformly lower semicontinuous on $X$, because $g$, $h$ are indicator functions and satisfy $\mathrm{epi}f_{1}\cap \mathrm{epi}f_{2}\ne \emptyset$.
\end{example}
}
}\fi


The next proposition
exploits inf-compactness assumptions in order to guarantee
conditions \eqref{R3.2-4} and \eqref{R3.2-6}, which are essential for the uniform and firm uniform lower semicontinuity properties, respectively; see
Remark~\ref{R3.2}\,\ref{R3.2.3} and \ref{R3.2.4}, and Proposition~\ref{P3.4}.


\begin{proposition}
\label{P4.6}
Suppose that the functions $f_t$ $(t\in T)$ are
\lsc.
If there exist a $t_0\in T$ and an $S_0\in\mathcal{F}(T)$ such that the sets $\{x\in X\mid f_{t_0}(x)\leq c\}$ are
compact for all $c\in\R$, and $\Lambda(\{f_t\}_{t\in S\setminus\{t_0\}})>-\infty$
for all $S\in\mathcal{F}(T)$ with $S_0\cup\{t_0\}\subset S$,
then condition \eqref{R3.2-4} is satisfied.
If, additionally,
\begin{align}
\label{P4.6-3}
\limsup_{\substack{\diam\{x_t\}_{t\in S}\to0\\ x_t\in\dom f_t\;(t\in S)}}\; \sum_{t\in S}f_{t}(x_t)<+\infty
\end{align}
for each $S\in\mathcal{F}(T)$ with $S_0\cup\{t_0\}\subset S$,
then
condition \eqref{R3.2-6} is satisfied.
\if{
{\AH{11/06/2025. Is not "firmly uniformly lower semicontinuous" (not weakly)?}}
\AK{12/06/25.
We show that $\limsup_{S\uparrow T,\;|S|<\infty}\,
{\Theta}(\{f_t\}_{t\in S})=0$.
This is weaker than ${\Theta}(\{f_t\}_{t\in T})=0$; see \eqref{R3.2-5}.}
}\fi
\end{proposition}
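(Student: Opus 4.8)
The plan is to reduce both assertions to statements about the \emph{finite} subcollections $\{f_t\}_{t\in S}$ with $S\supseteq S_0\cup\{t_0\}$, and then to run a single inf-compactness extraction. Since the limits over $\mathcal F(T)$ appearing in \eqref{De0} and in \eqref{R3.2-6} are computed as $\inf_{S_1}\sup_{S\supseteq S_1}$, and since each summand $\inf\sum_{t\in S}f_t-\Lambda(\{f_t\}_{t\in S})$ is nonnegative by the first inequality in \eqref{9} (while each $\Theta(\{f_t\}_{t\in S})$ is nonnegative because in \eqref{Th} the inner infimum is taken of a maximum whose first entry $\max_{t\in S}d(x,x_t)$ is nonnegative), it suffices to prove: \textbf{(a)} $\inf\sum_{t\in S}f_t=\Lambda(\{f_t\}_{t\in S})$ for every finite $S\supseteq S_0\cup\{t_0\}$, which gives \eqref{R3.2-4} on taking $S_1=S_0\cup\{t_0\}$; and \textbf{(b)} $\Theta(\{f_t\}_{t\in S})=0$ for every such $S$, which gives \eqref{R3.2-6} the same way.

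For \textbf{(a)}, fix such an $S$ and, using the $\liminf$ in \eqref{La0-}, choose $\delta_k\downarrow0$ and points $x_t^k\in\dom f_t$ $(t\in S)$ with $\diam\{x_t^k\}_{t\in S}<\delta_k$ and $\sum_{t\in S}f_t(x_t^k)\to\Lambda(\{f_t\}_{t\in S})$. Because $\diam\{x_t^k\}_{t\in S\setminus\{t_0\}}\to0$ as well, the definition of $\Lambda$ yields $\liminf_k\sum_{t\in S\setminus\{t_0\}}f_t(x_t^k)\ge\Lambda(\{f_t\}_{t\in S\setminus\{t_0\}})>-\infty$, so this partial sum is bounded below by some $M\in\R$ for large $k$. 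Hence $f_{t_0}(x_{t_0}^k)\le\sum_{t\in S}f_t(x_t^k)-M$, and since the full sum converges to $\Lambda(\{f_t\}_{t\in S})<+\infty$ it is bounded above; thus $x_{t_0}^k$ lies in a sublevel set $\{f_{t_0}\le c\}$, which is compact. Extract $x_{t_0}^{k_j}\to\bar x$; then $\diam\to0$ forces $x_t^{k_j}\to\bar x$ for every $t\in S$, and lower semicontinuity gives $\sum_{t\in S}f_t(\bar x)\le\liminf_j\sum_{t\in S}f_t(x_t^{k_j})=\Lambda(\{f_t\}_{t\in S})$. Therefore $\inf\sum_{t\in S}f_t\le\Lambda(\{f_t\}_{t\in S})$, which together with \eqref{9} is the claimed equality.

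For \textbf{(b)}, I argue by contradiction. If $\Theta(\{f_t\}_{t\in S})>0$, fix $\eps_0\in(0,\Theta(\{f_t\}_{t\in S}))$; by \eqref{Th}, for each $k$ there are $x_t^k\in\dom f_t$ $(t\in S)$ with $\diam\{x_t^k\}_{t\in S}<1/k$ and $\inf_x\max\{\max_{t\in S}d(x,x_t^k),\sum_{t\in S}(f_t(x)-f_t(x_t^k))\}>\eps_0$, i.e. every $x$ with $\max_{t\in S}d(x,x_t^k)\le\eps_0$ satisfies $\sum_{t\in S}f_t(x)>\sum_{t\in S}f_t(x_t^k)+\eps_0$. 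Now \eqref{P4.6-3} bounds $\sum_{t\in S}f_t(x_t^k)$ above for large $k$, and, exactly as in \textbf{(a)}, the hypothesis $\Lambda(\{f_t\}_{t\in S\setminus\{t_0\}})>-\infty$ places $x_{t_0}^k$ in a compact sublevel set of $f_{t_0}$, so along a subsequence $x_t^{k_j}\to\bar x$ for all $t\in S$. For large $j$ we have $\max_{t\in S}d(\bar x,x_t^{k_j})<\eps_0$, so the displayed inequality applies at $x=\bar x$, giving $\sum_{t\in S}f_t(\bar x)>\sum_{t\in S}f_t(x_t^{k_j})+\eps_0$; taking $\liminf_j$ and comparing with the semicontinuity bound $\sum_{t\in S}f_t(\bar x)\le\liminf_j\sum_{t\in S}f_t(x_t^{k_j})$ yields $0\ge\eps_0$, a contradiction. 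Hence $\Theta(\{f_t\}_{t\in S})=0$.

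The main obstacle in both parts is keeping every quantity finite and steering clear of the indeterminate form $+\infty-(+\infty)$. The assumption $\Lambda(\{f_t\}_{t\in S\setminus\{t_0\}})>-\infty$ is precisely what bounds $f_{t_0}$ from above along the sequence, so that inf-compactness becomes applicable, while in part \textbf{(b)} condition \eqref{P4.6-3} supplies the matching upper bound on the full decoupled sum. Finally, each $f_t$ being $\R_\infty$-valued (never $-\infty$) is what forces $\sum_{t\in S}f_t(\bar x)>-\infty$ and thereby rules out $\Lambda(\{f_t\}_{t\in S})=-\infty$, closing the comparison in \textbf{(a)}.
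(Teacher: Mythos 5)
Your proposal is correct and follows essentially the same route as the paper: both reduce to finite sets $S\supset S_0\cup\{t_0\}$, use $\Lambda(\{f_t\}_{t\in S\setminus\{t_0\}})>-\infty$ (plus \eqref{P4.6-3} for the second part) to trap $x_{t_0}^k$ in a compact sublevel set of $f_{t_0}$, extract a convergent subsequence, and conclude via lower semicontinuity before passing to the directed limits. Your organizational choices --- selecting a minimizing sequence in part (a), which sidesteps the paper's explicit two-case dichotomy on whether the decoupled sums stay bounded, and running part (b) by contradiction --- are only cosmetic variations on the paper's argument.
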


\begin{proof}
Let $t_0$ and $S_0$ be as in the proposition.
Take any $S\in\mathcal{F}(T)$ such that $S_0\cup\{t_0\}\subset S$, and any $x_{tk}\in\dom f_t$ ($t\in S$, $k\in\N$) such that $\diam\{x_{tk}\}_{t\in S}{\to0}$ as $k\to+\infty$.

\underline{Case 1}.
Let $\al:=\limsup_{k\to\infty}\sum_{t\in S}f_{t}(x_{tk}) <+\infty$
(this is automatically true if condition \eqref{P4.6-3} is satisfied).
Then
\begin{align*}
\limsup_{k\to\infty}f_{t_0}(x_{t_0k}) +\Lambda(\{f_t\}_{t\in S\setminus\{t_0\}})\le\al,
\end{align*}
and consequently,
$f_{t_0}(x_{t_0k})<c:= \al-{\Lambda(\{f_t\}_{t\in S\setminus\{t_0\}})}
+1<+\infty$ for all sufficiently large $k\in\N$,
i.e., a tail of the sequence $\{x_{t_0k}\}$
belongs to $\{x\in X\mid f_{t_0}(x)\leq c\}$.
By the compactness assumption, we can assume that $x_{t_0k}\to\hat x\in X$, and consequently,
$x_{tk}\to\hat x$ {for all $t\in S$}.
Thanks to the lower semicontinuity of $f_t$ $(t\in S)$, we have
\begin{align}
\label{P4.6P1}
\lim_{k\to+\infty}\sum_{t\in S}(f_{t}(\hat x)-f_{t}(x_{tk}))\le0.
\end{align}
\if{
{\AH{25/06/2025. Perhaps we should also check that $\hat x\in\bigcap_{t\in S}\dom f_t $. Actually, this could be done as in line (319) above by repeating the same argument while using $x_{tk}$ instead of $x_{t_0k}$.
}}
\AK{12/06/25.
$\hat x\in\bigcap_{t\in S}\dom f_t $ thanks to the lower semicontinuity of the functions.}
}\fi
Observe that $\hat x\in\bigcap_{t\in S}\dom f_t$ since, by the assumption, $\limsup_{k\to\infty}\sum_{t\in S}f_{t}(x_{tk}) <+\infty$.

\underline{Case 2}.
If $\sum_{t\in S}f_{t}(x_{tk})\to+\infty$ as $k\to+\infty$
(thus, condition \eqref{P4.6-3} is not satisfied), then, for any $\hat x\in\bigcap_{t\in S}\dom f_t$, condition \eqref{P4.6P1} trivially holds.

Combining the two cases, we
arrive at \eqref{R3.2-4}.
Moreover, under condition \eqref{P4.6-3},
only the second case is possible and, as shown above, \begin{align*}
{\Theta}(\{f_t\}_{t\in S}) \le\lim_{k\to+\infty}\max\Big\{\max_{t\in S}d(\hat x_S,x_{tk}),\sum_{t\in S}(f_{t}(\hat x_S)-f_{t}(x_{tk}))\Big\}=0.
\end{align*}
Hence,
condition \eqref{R3.2-6} is satisfied.
\qed
\end{proof}
\if{
\begin{remark}
If $X$ is sequentially compact, the compactness assumptions in Proposition~\ref{P4.6} are satisfied automatically.
{\AH{11/06/2025. If you agree with the observation above, you can remove the word "sequentially" here.}}
\end{remark}
}\fi

The arguments in the proof of Proposition~\ref{P4.6} can be reused to prove the next statement.

\begin{proposition}
\label{P4.7}
Let $X$ be a normed space,
and the functions $f_t$ $(t\in T)$ be weakly sequentially \lsc\ on $U\subset X$.
If there exist a $t_0\in T$ and an $S_0\in\mathcal{F}(T)$ such that the sets $\{x\in U\mid f_{t_0}(x)\leq c\}$ are weakly sequentially compact for all $c\in\R$, and {$\Lambda_{U}(\{f_t\}_{t\in S\setminus\{t_0\}})>-\infty$}
for all $S\in\mathcal{F}(T)$ with $S_0\cup\{t_0\}{\subset S}$,
then
\begin{gather*}
\limsup_{S\uparrow T,\;|S|<\infty}\Big(\inf\sum_{t\in S} f_t- {\Lambda}_U\big(\{f_t\}_{t\in S}\big)\Big)\le 0.
\end{gather*}
%
\end{proposition}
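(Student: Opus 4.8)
The plan is to reuse, almost verbatim, the argument from the first part of the proof of Proposition~\ref{P4.6} (the one establishing \eqref{R3.2-4}), replacing norm convergence by weak convergence, compactness by weak sequential compactness, and lower semicontinuity by weak sequential lower semicontinuity. The quantity to be bounded is precisely $\Delta_U(\{f_t\}_{t\in T})$ in the sense of \eqref{De0} computed on $U$. Since by \eqref{al} the $\limsup$ over the directed set $\mathcal{F}(T)$ equals $\inf_{S_0'\in\mathcal{F}(T)}\sup_{S\supset S_0'}$, it is majorized by the supremum taken over $S\supset S_0\cup\{t_0\}$ alone; hence it suffices to show that for every finite $S$ with $S_0\cup\{t_0\}\subset S$ one has
\[
\inf_U\sum_{t\in S}f_t\le\Lambda_U(\{f_t\}_{t\in S}).
\]
The reverse inequality is automatic (cf. \eqref{9}), so this amounts to uniform lower semicontinuity of the finite subcollection $\{f_t\}_{t\in S}$ on $U$.

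First I would fix such an $S$ and take any decoupling sequence $x_{tk}\in U\cap\dom f_t$ $(t\in S)$ with $\diam\{x_{tk}\}_{t\in S}\to0$, then split into two cases as in Proposition~\ref{P4.6}. When $\al:=\limsup_k\sum_{t\in S}f_t(x_{tk})<+\infty$, the elementary estimate $\limsup_k(a_k+b_k)\ge\limsup_k a_k+\liminf_k b_k$ together with the fact that $\Lambda_U(\{f_t\}_{t\in S\setminus\{t_0\}})$ is the $\liminf$ as the diameter vanishes gives $\limsup_k f_{t_0}(x_{t_0k})+\Lambda_U(\{f_t\}_{t\in S\setminus\{t_0\}})\le\al$; since $\Lambda_U(\{f_t\}_{t\in S\setminus\{t_0\}})>-\infty$ by hypothesis, the values $f_{t_0}(x_{t_0k})$ stay below some $c\in\R$ along a tail. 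Weak sequential compactness of $\{x\in U\mid f_{t_0}(x)\le c\}$ then yields a subsequence with $x_{t_0k}\weakly\hat x\in U$, and because $\diam\{x_{tk}\}_{t\in S}\to0$ forces $x_{tk}-x_{t_0k}\to0$ in norm, all $x_{tk}\weakly\hat x$. Weak sequential lower semicontinuity gives $f_t(\hat x)\le\liminf_k f_t(x_{tk})$ for each $t\in S$, and summing over the finite set $S$ yields $\inf_U\sum_{t\in S}f_t\le\sum_{t\in S}f_t(\hat x)\le\liminf_k\sum_{t\in S}f_t(x_{tk})$. In the complementary case $\sum_{t\in S}f_t(x_{tk})\to+\infty$ this inequality is trivial. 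Taking the infimum over all decoupling sequences and invoking the purely metric identity $\Lambda_U(\{f_t\}_{t\in S})=\inf\{\liminf_k\sum_{t\in S}f_t(x_{tk})\mid\diam\{x_{tk}\}_{t\in S}\to0,\ x_{tk}\in U\}$ gives the displayed inequality, and passing to $\limsup$ over $S\uparrow T$ finishes the proof.

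The main obstacle is the proper handling of the weak topology at the three delicate points. First, transferring weak convergence from the single sequence $\{x_{t_0k}\}$ to all $\{x_{tk}\}$ $(t\in S)$: this is legitimate because the diameter condition forces the differences to zero in norm, hence weakly. Second, ensuring that the weak limit $\hat x$ stays in $U$; this is exactly why weak sequential compactness is required \emph{as a subset of} $U$, so that weak sequential limit points of the sublevel set remain in $U$ (indeed in the sublevel set). Third, since only finitely many functions are summed, the coordinatewise use of weak sequential lower semicontinuity and the bound $\sum\liminf\le\liminf\sum$ need no diagonal extraction. Finally, it is worth stressing that $\Lambda_U$ is still measured through the diameter, a metric notion untouched by the passage to the weak topology, so its sequential characterization is inherited unchanged from the strong setting of Proposition~\ref{P4.6}.
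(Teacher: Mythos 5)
Your proof is correct and takes essentially the same route as the paper: for Proposition~\ref{P4.7} the paper gives no separate argument, saying only that the proof of Proposition~\ref{P4.6} can be reused, which is exactly what you do, replacing compactness and lower semicontinuity by their weak sequential counterparts and working on $U$. The three delicate points you flag (the weak limit $\hat x$ remaining in $U$ via weak sequential compactness of the sublevel set, the norm-vanishing diameter transferring weak convergence to all coordinates, and the finiteness of $S$ making the coordinatewise use of weak sequential lower semicontinuity harmless) are precisely the adaptations the paper leaves implicit.
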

\if{
{\AH{11/06/2025. Here the word "sequentially" makes sense.}}
{\AH{11/06/2025. Remember Eberlein–Šmulian theorem.}}
}\fi

\begin{remark}
\begin{enumerate}
\item
Thanks to Remark~\ref{R3.2}\,\ref{R3.2.3} and \ref{R3.2.4}, if $T$ is finite, the first part of Proposition~\ref{P4.6} and Proposition~\ref{P4.7} provide sufficient conditions for the uniform lower semicontinuity, while the second part of Proposition~\ref{P4.6} provides sufficient conditions for the firm uniform lower semicontinuity.
Thanks to Proposition~\ref{P4.6}, under the  inf-stability condition \eqref{P3.4-1}, the mentioned sufficient conditions guarantee the uniform lower semicontinuity even when $T$ is infinite.
\item
In Proposition~\ref{P4.7}, if $X$ is Banach, then, by Eberlein–Šmulian theorem, it suffices to assume the level sets to be weakly compact.
\end{enumerate}
\end{remark}
\if{
{\AH{11/06/2025. Have we defined the weak firm uniform lower semicontinuity property?}}
\AK{12/06/25.
Both ``weak'' properties are defined in Remark~\ref{R3.2}\,\ref{R3.2.3} and \ref{R3.2.4} without making a strong emphasis.
As we discussed earlier, I am hiding all less essential things into remarks.
The ``weak'' properties ARE essential in the FINITE case.
I am thinking about ways of discussing these things without formally defining the ``weak'' properties.}
}\fi

\section{Optimality conditions}
\label{S6}

In this section, we illustrate the extended decoupling techniques discussed in Section~\ref{S3} by employing them to deriving dual
necessary conditions for a point $\bar x\in\dom\overline\sum_{t\in T}f_{t}$ to be a
local minimum of the sum $\overline\sum_{t\in T}f_{t}$ of \lsc\ functions $f_{t}$ $(t\in T)$.
Since we target dual necessary conditions, the underlying space in this section is assumed Banach.

\begin{theorem}
\label{T6.1}
Let $X$ be a Banach space, and
$\bx\in X$.
Suppose that there is a number ${\de>0}$ such that
$\bx$ is a minimum of $\overline\sum_{t\in T}f_{t}$ over $B_\de(\bx)$;
$f_t$ $(t\in T)$ are lower semicontinuous and bounded from below on $B_\de(\bx)$; and $\{f_{t}\}_{t\in T}$ is uniformly lower semicontinuous on $B_\de(\bx)$.
\if{
\begin{itemize}
\item
$\bx$ is a minimum of $\overline\sum_{t\in T}f_{t}$ over $B_\de(\bx)$;
\item
$f_t$ $(t\in T)$ are lower semicontinuous and bounded from below on $B_\de(\bx)$;
\item
$\{f_{t}\}_{t\in T}$ is quasiuniformly lower semicontinuous on $B_\de(\bx)$.
\end{itemize}
}\fi
\begin{enumerate}
\item
\label{T6.1.1}
For any $\eps>0$ and $S_0\in\mathcal{F}(T)$,
there exist an $S:=\{t_1,\ldots,t_m\}\in\mathcal{F}(T)$ with some $m\in\N$, and points $x_1,\ldots,x_m\in B_\eps(\bx)$
such that $S_0\subset S$, and
\begin{gather}
\label{T6.2-02}
\sum_{i=1}^m\big(f_{t_i}(x_i)-f_{t_i}(\bx)\big)\le0,
\\
\label{T6.2-03}
0\in\sum_{i=1}^m{\sdc}f_{t_i}(x_i)+\eps\B^*.
\end{gather}
\item
\label{T6.1.2}
If $X$ is Asplund, then,
for any $\eps>0$ and $S_0\in\mathcal{F}(T)$,
there exist an $S:=\{t_1,\ldots,t_m\}\in\mathcal{F}(T)$ with some $m\in\N$, and points $x_1,\ldots,x_m\in B_\eps(\bx)$
such that $S_0\subset S$, and
\begin{gather}
\label{T6.2-01}
\sum_{i=1}^m\big(f_{t_i}(x_i)-f_{t_i}(\bx)\big)<\eps,
\\
\label{T6.2-05}
0\in\sum_{i=1}^m{\sdf}f_{t_i}(x_i)+\eps\B^*.
\end{gather}
\end{enumerate}
\end{theorem}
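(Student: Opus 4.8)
The plan is to derive both parts from a single decoupling construction on the product space $X^m$, branching only at the last step. First I would use uniform lower semicontinuity on $B_\de(\bx)$ to pass to a finite index set. Write $\mu:=\overline\sum_{t\in T}f_t(\bx)=\inf_{B_\de(\bx)}\overline\sum_{t\in T}f_t$, the second equality being the minimality of $\bx$, so that the qualification condition reads $\mu\le{\Lambda}(\{f_t\}_{t\in T})$. Unravelling definitions \eqref{La0} and \eqref{f} over the directed set, for any tolerance $\eta>0$ this produces a finite set $S=\{t_1,\dots,t_m\}\supset S_0$ and a (small) radius $\de'>0$ such that the diagonal value satisfies $\sum_{t\in S}f_t(\bx)<\mu+\eta$ while, crucially, the \emph{decoupled} lower bound $\sum_{t\in S}f_t(x_t)>\mu-\eta$ holds for all $x_t\in B_\de(\bx)$ with $\diam\{x_t\}_{t\in S}<\de'$. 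This is the only place uniform lower semicontinuity is used. We may also assume $\eps<\de/2$.

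Second, on the complete metric space $(\overline B_{\de/2}(\bx))^m$ (a closed subset of the Banach space $X^m$, with the max-metric) I would minimize
\[
H(x_1,\dots,x_m):=\sum_{i=1}^m f_{t_i}(x_i)+K\max_{i,j}\norm{x_i-x_j}+\eps_3\max_i\norm{x_i-\bx},
\]
where boundedness from below lets me pick $K$ so large that every configuration of diameter $\ge\de'$ costs strictly more than the diagonal, and $\eps_3>0$ is a small anchoring coefficient. Then $H$ is proper, lower semicontinuous and bounded below, the diagonal $(\bx,\dots,\bx)$ lies within $2\eta$ of $\inf H$, and Ekeland's principle (Lemma~\ref{Ekeland}) yields $\hat x=(\hat x_1,\dots,\hat x_m)$ with $H(\hat x)\le H(\bx,\dots,\bx)<\mu+\eta$ that strictly minimizes $H(\cdot)+\eps_4\,d(\cdot,\hat x)$. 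Reading off $H(\hat x)<\mu+\eta$ forces $\hat x$ into the good regime $\diam\{\hat x_i\}<\de'$ and $\max_i\norm{\hat x_i-\bx}<\eps/2$ (choose $\eta<\eps_3\eps/4$), so each $\hat x_i\in B_{\eps/2}(\bx)$ is interior to $\overline B_{\de/2}(\bx)$, the ball constraint is inactive, and $\sum_i f_{t_i}(\hat x_i)\le\sum_{t\in S}f_t(\bx)$ gives \eqref{T6.2-02}.

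Third, $\hat x$ is a local minimizer of $\Phi:=f_1+f_2$ with $f_2(x):=\sum_i f_{t_i}(x_i)$ lower semicontinuous and separable and $f_1(x):=K\max_{i,j}\norm{x_i-x_j}+\eps_3\max_i\norm{x_i-\bx}+\eps_4\,d(x,\hat x)$ Lipschitz, so Fermat gives $0\in\sdf\Phi(\hat x)\subset\sdc\Phi(\hat x)$. For part~\ref{T6.1.1} I invoke the Clarke rule (Lemma~\ref{SR}\ref{SR.Clarke}) and the separable-sum identity $\sdc f_2(\hat x)=\prod_i\sdc f_{t_i}(\hat x_i)$ to get $\zeta_i+\xi_i=0$ with $\xi_i\in\sdc f_{t_i}(\hat x_i)$ and $(\zeta_i)\in\sdc f_1(\hat x)$; for part~\ref{T6.1.2}, since $X^m$ is Asplund (finite product of Asplund spaces), the \Fr\ fuzzy rule (Lemma~\ref{SR}\ref{SR.2}) furnishes nearby points $u,v$ and $\zeta_i+\xi_i+\beta_i=0$ with $\xi_i\in\sdf f_{t_i}(v_i)$, $(\zeta_i)\in\sdf f_1(u)$ and $\sum_i\norm{\beta_i}<\eps_5$. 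The decisive estimate, common to both, is obtained by testing the subgradient inequality for $f_1$ against the diagonal direction $(h,\dots,h)$: the spread term is invariant under diagonal translations and contributes nothing, while the two anchoring terms are $(\eps_3+\eps_4)$-Lipschitz, whence $\norm{\sum_i\zeta_i}\le\eps_3+\eps_4$. Summing the coordinatewise relations over $i$ then gives $0\in\sum_i\sdc f_{t_i}(\hat x_i)+(\eps_3+\eps_4)\B^*$ (resp.\ $0\in\sum_i\sdf f_{t_i}(v_i)+(\eps_3+\eps_4+\eps_5)\B^*$), and choosing the small parameters to sum to $\eps$ yields \eqref{T6.2-03} (resp.\ \eqref{T6.2-05}); the $v_i\in B_\eps(\bx)$ and the value bound \eqref{T6.2-01} follow from the closeness and value control in the fuzzy rule.

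The main obstacle is the design of the penalty $f_1$, which must simultaneously pull the Ekeland point \emph{close to $\bx$} (not merely cluster the coordinates) and have $\sum_i\zeta_i$ small for \emph{every} subgradient, since this sum is exactly what has to be absorbed into $\eps\B^*$. These requirements conflict: a translation-invariant spread penalty gives $\sum_i\zeta_i=0$ for free but exerts no pull toward $\bx$, whereas any genuine distance-to-$\bx$ term helps localize but destroys the translation invariance. The resolution is to carry a large translation-invariant spread term alongside a \emph{small} anchoring term whose coefficient $\eps_3$ is charged to the final $\eps$-ball, and to force $\eta\ll\eps_3\eps$ so that even this small term localizes the Ekeland point near $\bx$ and keeps it interior to $\overline B_{\de/2}(\bx)$ (thereby silencing the constraint's normal cone). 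A secondary technical point is the verification of the separable-sum formula for the upper-subderivative-based Clarke subdifferential used in part~\ref{T6.1.1}.
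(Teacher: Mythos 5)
Your proposal is correct and follows essentially the same route as the paper's proof: reduction to a finite subfamily via the uniform lower semicontinuity inequality, Ekeland's principle on the product of balls applied to the decoupled sum penalized by a large diameter term plus a small anchor toward $\bx$, and then the Clarke rule (resp.\ the fuzzy \Fr\ rule) combined with the diagonal-translation argument showing that subgradients of the diameter penalty sum to zero, so that only the small anchoring and Ekeland coefficients survive in $\eps\B^*$. The only deviations are cosmetic: you use a linear anchor $\eps_3\max_i\|x_i-\bx\|$ where the paper uses the quadratic $\al\max_i\|x_i-\bx\|^2$ (both localize the Ekeland point and contribute a controllably small Lipschitz constant), you bundle all Lipschitz penalties into one convex function and estimate $\|\sum_i\zeta_i\|$ directly instead of splitting $g$ and $h$ via the convex sum rule as the paper does, and your claimed \emph{equality} $\sdc f_2(\hat x)=\prod_i\sdc f_{t_i}(\hat x_i)$ is an overstatement, though only the inclusion $\subset$ (which the paper asserts and which is what holds) is actually used.
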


\begin{proof}
Fix any $\eps>0$
and $S_0\in\mathcal{F}(T)$.
By Definition~\ref{D3.2}\,\ref{D3.2.1} and definition \eqref{La0} with $B_\de(\bar x)$ in place of $X$,
we have
\begin{gather}
\label{T6.1P00}
\overline{\sum_{t\in T}}f_{t}(\bx)\le\liminf_{S\uparrow T,\;|S|<\infty}\; \liminf_{\substack{\diam\{x_t\}_{t\in S}\to0\\ x_t\in B_\de(\bar x)\;(t\in S)}}\;\sum_{t\in S}f_{t}(x_{t}).
\end{gather}
Set $\de':=\min\{\eps,\de\}$.
Choose a number $\eps'\in(0,\eps\de'/2)$.
Observe that $2\eps'/\eps<\de'$.
Choose numbers $\rho\in(2\eps'/\eps,\de')$
and $\eta>0$.
{Observe that}
$2\eps'/\rho<\eps$.
Set
\begin{align}
\label{T6.1P01}
\al:=\eps'/\rho^2\AND
\xi:=\eps-2\eps'/\rho.
\end{align}
By \eqref{f} and \eqref{T6.1P00}, there exist an
$S:=\{t_1,\ldots,t_m\}\in\mathcal{F}(T)$ with some $m\in\N$, and a number $\eta'\in(0,\eta)$ such that
$S_0\subset S$, and
\begin{gather}
\label{T6.1P003a}
\Big|\sum_{i=1}^mf_{t_i}(\bx)-\overline{\sum_{t\in T}}f_{t}(\bx)\Big|<\eps'/2,
\\
\label{T6.1P003b}
\overline{\sum_{t\in T}}f_{t}(\bx)< \inf_{\substack{\diam\{u_1,\ldots,u_m\}<\eta'\\ u_1,\ldots,u_m\in\overline B_\rho(\bar x)}}\; \sum_{i=1}^m f_{t_i}(u_{i})+\eps'/2.
\end{gather}
As a consequence of \eqref{T6.1P003a} and \eqref{T6.1P003b}, we have
\begin{align}
\label{T6.1P03}
\sum_{i=1}^mf_{t_i}(\bx) <\inf_{\substack{\diam\{u_1,\ldots,u_m\}<\eta'\\ u_1,\ldots,u_m\in\overline B_\rho(\bar x)}}\; \sum_{i=1}^m f_{t_i}(u_{i})+\eps'.
\end{align}
Let $c:=\inf_{i=1,\ldots,m;\,u\in B_\de(\bx)}f_{t_i}(u)$ ($>-\infty$).
Choose a
number $\ga>\big(\sum_{i=1}^mf_{t_i}(\bx)-mc\big)/\eta'$.
Consider the functions
$\varphi_{\gamma},\widehat\varphi_{\gamma}\colon X^m\to\R_{\infty}$
defined
for all $u_1,\ldots,u_m\in X$ by
\begin{align}
\label{phiga}
&\varphi_\ga(u_1,\ldots,u_m):= \sum_{i=1}^mf_{t_i}(u_i)+\ga\,\diam\{u_1,\ldots,u_m\},
\\
\label{T6.1P04}
&\widehat\varphi_{\gamma}(u_1,\ldots,u_m):=
\varphi_\ga(u_1,\ldots,u_m)+\al \max_{i=1,\ldots,m}\|u_i-\bx\|^2.
\end{align}
Observe that $\widehat\varphi_{\gamma}(\bx,\ldots,\bx) =\varphi_{\gamma}(\bx,\ldots,\bx) =\sum_{i=1}^mf_{t_i}(\bx)$,
and $\widehat\varphi_{\gamma}$ is \lsc\ and bounded from below on
$(\overline{B}_{\rho}(\bar x))^m$.
Noting that $(\overline{B}_{\rho}(\bar x))^m$ is a complete metric space (equipped with the metric induced by the max norm on $X^m$),
by Ekeland variational principle
(Lemma~\ref{Ekeland}),
applied to the restriction of $\widehat\varphi_{\ga}$ to $(\overline{B}_{\rho}(\bar x))^m$,
there exist
$\hat x_1,\ldots,\hat x_m\in
\overline{B}_{\rho}(\bar x)$
such that
\begin{align}
\label{T6.1P05-1}
&\widehat\varphi_{\ga}(\hat x_1,\ldots,\hat x_m)
\le \sum_{i=1}^mf_{t_i}(\bx),\\
\notag
\widehat\varphi_{\ga}(u_1,\ldots,u_m)+\xi \max_{i=1,\ldots,m}&\|u_i-\hat x_i\|
{>}
\widehat\varphi_{\ga}(\hat x_1,\ldots,\hat x_m)
\\
\label{T6.1P05-2}
&\text{for all}\;\; (u_1,\ldots,u_m)
\in(\overline{B}_{\rho}(\bar x))^m\setminus (\hat x_1,\ldots,\hat x_m).
\end{align}
In view of \eqref{phiga} and \eqref{T6.1P04},
condition \eqref{T6.1P05-1} yields
\begin{align}
\label{T6.1P06}
\sum_{i=1}^mf_{t_i}(\hat x_i)\le \sum_{i=1}^mf_{t_i}(\bx).
\end{align}
Moreover, it follows from \eqref{phiga}, \eqref{T6.1P04} and \eqref{T6.1P05-1}
that
\begin{align}
\notag
\ga\,\diam\{\hat x_1,\ldots,\hat x_m\} +\al\max_{i=1,\ldots,m} \|\hat x_i-\bx\|^2 =&\widehat\varphi_\ga(\hat x_1,\ldots,\hat x_m) -\sum_{i=1}^m f_{t_i}(\hat x_i)
\\
\label{T6.1P07}
\le&
\sum_{i=1}^mf_{t_i}(\bx)-\sum_{i=1}^m f_{t_i}(\hat x_i).
\end{align}
Hence, $\diam\{\hat x_1,\ldots,\hat x_m\}
<(\sum_{i=1}^mf_{t_i}(\bx)-mc)/\ga<\eta'$
and, thanks to \eqref{T6.1P03},
$\max_{i=1,\ldots,m}\|\hat x_i-\bx\|^2<\eps'/\al=\rho^2.$
Thus,
\begin{align}
\label{T6.1P09}
\max_{i=1,\ldots,m}\|\hat x_i-\bx\|<\rho.
\end{align}
In view of \eqref{T6.1P04}, we have
for all $u_1,\ldots,u_m\in X$:
\begin{align*}
\varphi_\ga(\hat x_1,\ldots,\hat x_m) &-\varphi_\ga(u_1,\ldots,u_m)
-\widehat\varphi_{\ga}(\hat x_1,\ldots,\hat x_m)+\widehat\varphi_{\ga}(u_1,\ldots,u_m)\\
&=
\al\max_{i=1,\ldots,m}\bigl(\|u_{i}-\bx\|^2-\|\hat x_i-\bx\|^2\bigr)
\\
&\le\al\max_{i=1,\ldots,m}\big(\|u_{i}-\hat x_i\|\big(\|u_{i}-\bx\|+\|\hat x_i-\bx\|\big)\big)\\
&\le
\al \max_{i=1,\ldots,m}\|u_{i}-\hat x_i\|
\max_{i=1,\ldots,m}\big(\|u_{i}-\bx\|+\|\hat x_i-\bx\|\big),
\end{align*}
and consequently, thanks to \eqref{T6.1P01} and \eqref{T6.1P05-2},
\begin{align}
\notag
\sup_{\substack{u_1,\ldots,u_m\in\overline{B}_{\rho}(\bar x)\\
(u_1,\ldots,u_m)\ne(\hat x_1,\ldots,\hat x_m)}}
\frac{\varphi_\ga(\hat x_1,\ldots,\hat x_m)-\varphi_\ga(u_1,\ldots,u_m)}
{\max_{i=1,\ldots,m}\|u_{i}-\hat x_i\|}
&<\xi+2\al\rho
\\&=
\label{T6.1P11}
\eps-\frac{2\eps'}{\rho}+\frac{2\eps'}{\rho}=\eps.
\end{align}
By \eqref{T6.1P09}, the points $\hat x_1,\ldots,\hat x_m$ lie in the interior of $\overline B_{\rho}(\bx)$,
and it follows from \eqref{T6.1P11} that
\begin{align}
\label{T6.1P12}
\limsup_{\substack{(u_1,\ldots,u_m)\to(\hat x_1,\ldots,\hat x_m)\\
(u_1,\ldots,u_m)\ne(\hat x_1,\ldots,\hat x_m)}}
\frac{\varphi_\ga(\hat x_1,\ldots,\hat x_m)-\varphi_\ga(u_1,\ldots,u_m)}
{\max_{i=1,\ldots,m}\|u_{i}-\hat x_i\|}<\eps,
\end{align}
and consequently, there is a number
$\hat\eps\in(0,\eps)$ such that
\begin{align}\label{Abderrahim}
\liminf\limits_{\substack{(u_1,\ldots,u_m)\to(\hat x_1,\ldots,\hat x_m)\\
(u_1,\ldots,u_m)\ne(\hat x_1,\ldots,\hat x_m)}}
\frac{\varphi_\gamma(u_1,\ldots,u_m)-\varphi_{\gamma}(\hat x_1,\ldots,\hat x_m)+ \hat\eps\max\limits_{i=1,\ldots,m}\|u_t-\hat x_i\|}
{\max\limits_{i=1,\ldots,m}\|u_{i}-\hat x_i\|}>0.
\end{align}
For all $u_1,\ldots,u_m\in
X$, set
\begin{gather}
\label{phi}
\varphi(u_1,\ldots,u_m):=\sum_{i=1}^mf_{t_i}(u_{i}),
\\
\notag
g(u_1,\ldots,u_m):=\ga\,\diam\{u_1,\ldots,u_m\},
\quad
h(u_1,\ldots,u_m):=\hat\eps
{\max_{i=1,\ldots,m}\|u_{i}-\hat x_i\|}.
\end{gather}
By definition of the Fr\'{e}chet subdifferential and \eqref{phiga}, inequality \eqref{Abderrahim} yields
\begin{align}
\label{T6.2P03}
0\in{\sdf}\left(\varphi+g+h\right)(\hat x_1,\ldots,\hat x_m).
\end{align}
The next step is to apply to \eqref{T6.2P03} a subdifferential sum rule.
Note that $g$ and $h$ are convex and Lipschitz continuous, and, for all $u_1,\ldots,u_m\in X$,
the following relations hold for the respective subdifferentials:
\begin{gather}
\label{T6.2P05}
\sdf\varphi(u_1,\ldots,u_m)\subset\prod_{i=1}^m\sdf f_{t_i}(u_i),\quad
\sdc\varphi(u_1,\ldots,u_m)\subset\prod_{i=1}^m\sdc f_{t_i}(u_i),\\
\label{T6.2P07}
(u_1^*,\ldots,u_m^*)\in\sd g(u_1,\ldots,u_m)
\quad\Rightarrow\quad
\sum_{i=1}^mu_{i}^*=0,
\\
\label{T6.2P08}
(u_1^*,\ldots,u_m^*)\in\sd h(u_1,\ldots,u_m)
\quad\Rightarrow\quad
\sum_{i=1}^{m}\|u_i^*\|\le\hat\eps.
\end{gather}
Implication \eqref{T6.2P07} is less common in the literature on the subject.
So, we give here a short proof.
Let $u_1,\ldots,u_m\in X$ and $(u_1^*,\ldots,u_m^*)\in\sd g(u_1,\ldots,u_m)$, i.e.,
\begin{gather}
\label{T6.2P09}
\ga\diam\{u_1',\ldots,u_m'\}-\ga\diam\{u_1,\ldots,u_m\} \ge\sum_{i=1}^m\ang{u_t^*,u_t'-u_t}
\end{gather}
for all $u_1',\ldots,u_m'\in X$.
Let $\zeta>0$.
There is a $u\in X$ such that
$\|u\|=1$ and $\ang{\sum_{i=1}^mu_{i}^*,u} >\|\sum_{i=1}^mu_i^*\|-\zeta$.
For all $i=1,\ldots,m$, set $u_{i}':=u_{i}+u$.
Then $\diam\{u_1',\ldots,u_m'\}=\diam\{u_1,\ldots,u_m\}$, $\sum_{i=1}^m\ang{u_{i}^*,u_{i}'-u_{i}}=\ang{\sum_{i=1}^mu_{i}^*,u}$, and it follows from \eqref{T6.2P09} that $\|\sum_{i=1}^mu_i^*\|<\zeta$.
Since $\zeta$ is arbitrary, we obtain $\sum_{i=1}^mu_i^*=0$.

\if{
\AK{27/07/23.
I have failed to find a reference for \eqref{T6.2P07}.}
\AH{A possible reference for \eqref{T6.2P07} would be Lemma 3.2.2 in Borwein-Zhu' book:\\
Jonathan M. Borwein, Qiji J. Zhu: Techniques of Variational Analysis, CMS Books in Mathematics, Springer, 2005.}
\AK{12/08/24.
Lemma 3.2.2 in Borwein-Zhu gives an analogue of \eqref{T6.2P07} but not the exact assertion.
Anyway, they write in the proof that it ``is easy and left as an exercise''.
I think this is applicable also to \eqref{T6.2P07}.}
\AH{I am not sure but the relations in \eqref{T6.2P07} (or one of them) only hold as inclusions "$\subset$", the one which serve us.}
\AK{12/08/24.
Did you mean \eqref{T6.2P05}?
I have replaced the equalities by inclusions.
They are straightforward.
The equalities may also be true but this is not important.}
}\fi
Inclusion \eqref{T6.2P03} obviously yields $0\in{\sdc}\left(\varphi+g+h\right)(\hat x_1,\ldots,\hat x_m)$.
By the Clarke subdifferential sum rule
(Lemma~\ref{SR}\,\ref{SR.Clarke}),
there exist subgradients $(x_1^*,\ldots,x_m^*)\in{\sdc}\varphi(\hat x_1,\ldots,\hat x_m)$, $(x_1'^*,\ldots,x_m'^*)\in{\sdf}g(\hat x_1,\ldots,\hat x_m)$ and $(x_1''^*,\ldots,x_m''^*)\in{\sdf}h(\hat x_1,\ldots,\hat x_m)$ such that $(x_1^*,\ldots,x_m^*)+(x_1'^*,\ldots,x_m'^*) +(x_1''^*,\ldots,x_m''^*)=0$.
By \eqref{T6.2P07} and \eqref{T6.2P08},
$$\Big\|\sum_{i=1}^mx_{i}^*\Big\|
=\Big\|\sum_{i=1}^m(x_{i}^*+x_{i}'{}^*)\Big\|
=\Big\|\sum_{i=1}^mx_{i}''{}^*\Big\|
\le\sum_{i=1}^m \|x_{i}''{}^*\|\le\hat\eps <\eps.$$
Set $x_i:=\hat x_i$ $(i=1,\ldots,m)$.
In view of \eqref{T6.1P06}, \eqref{T6.1P09} and \eqref{T6.2P05}, we have
$x_1,\ldots,x_m\in B_\eps(\bx)$, and conditions \eqref{T6.2-02} and \eqref{T6.2-03} are satisfied.

Suppose now that $X$ is an Asplund space.
By the \Fr\  subdifferential sum rule
(Lem\-ma~\ref{SR}\,\ref{SR.2})
combined with the convex sum rule
(Lemma~\ref{SR}\,\ref{SR.1}),
applied to \eqref{T6.2P03},
\if{
there exist a point $\{x_t\}_{t\in S}$
arbitrarily close to $\{\hat x_t\}_{t\in S}$
with $\varphi(\{x_t\}_{t\in S})$ arbitrarily close to $\varphi(\hat x_1,\ldots,\hat x_m)$
and a subgradient $\{x_t^*\}_{t\in S}\in{\sdf}\varphi(\{x_t\}_{t\in S})$
such that, taking into account \eqref{T6.1P09}, \eqref{T6.2P07} and \eqref{T6.2P08}, we have
\red{$\|x_t-\bx\|<\eps$ $(t\in S)$},
condition \eqref{T6.2-01} is satisfied, and $\|\sum_{i=1}^mx_{t}^*\|<\eps$.
In view of \eqref{phi} and \eqref{T6.2P05}, condition \eqref{T6.2-05} is satisfied too.
}\fi
for any $\zeta>0$,
there exist points $(x_1,\ldots,x_m),(x_1',\ldots,x_m')\in{X^m}$, and subgradients
$(x_1^*,\ldots,x_m^*)\in{\sdf}\varphi(x_1,\ldots,x_m)$, $(x_1'^*,\ldots,x_m'^*)\in{\sdf}g(x_1',\ldots,x_m')$ and $(x_1''^*,\ldots,x_m''^*)\in{\sdf}h(x_1',\ldots,x_m')$ such that
\begin{gather*}
\|x_i-\hat x_i\|<\zeta
\;\; (i=1,\ldots, m),\quad
|\varphi(x_1,\ldots,x_m)-\varphi(\hat x_1,\ldots,\hat x_m)|<\zeta,
\\
\sum_{i=1}^{m}\|x_i^*+x_i'^* +x_i''^*\|<\zeta.
\end{gather*}
The number $\zeta$ can be chosen small enough so that (thanks to \eqref{T6.1P09})
$\|x_i-\bx\|<\eps$ $(i=1,\ldots,m)$,
(thanks to \eqref{T6.1P06} and \eqref{phi}) condition \eqref{T6.2-01} is satisfied, and $\hat\eps+\zeta<\eps$.
Hence, thanks to \eqref{T6.2P05}, \eqref{T6.2P07} and \eqref{T6.2P08}, we see that
$$\Big\|\sum_{i=1}^mx_{t_i}^*\Big\|
=\Big\|\sum_{i=1}^m(x_{t_i}^*+x_{t_i}'{}^*)\Big\|
<\Big\|\sum_{i=1}^mx_{t_i}''{}^*\Big\|+\zeta
\le \sum_{i=1}^{m}\|x_{t_i}''^*\|+\zeta
\le\hat\eps+\zeta<\eps,$$
i.e., condition \eqref{T6.2-05} is satisfied.
\qed
\end{proof}

\begin{remark}
\label{R3.1}
\begin{enumerate}
\item
\label{R3.1.2}
Some sufficient conditions for the uniform lower semicontinuity
assumed in Theorem~\ref{T6.1} are given in
{Section~\ref{S3}}.
\item
The dual necessary conditions (multiplier rules) in Theorem~\ref{T6.1} hold not necessarily at the reference point but at some points arbitrarily close to it.
Such conditions are usually referred to as \emph{approximate} or \emph{fuzzy}.

\item
The two assertions in Theorem~\ref{T6.1} correspond to the two subdifferential sum rules in Lemma~\ref{SR}\,\ref{SR.Clarke} and \ref{SR.2} used in the proof.
In the first assertion, other ``trustworthy'' subdifferentials can be employed instead of the Clarke ones, e.g., $G$-subdifferentials of Ioffe; see \cite[Theorem~4.69]{Iof17}.

\item
\label{R3.1.4}
Conditions \eqref{T6.2-02} and \eqref{T6.2-01} in Theorem~\ref{T6.1} guarantee that the values $\sum_{i=1}^mf_{t_i}(x_i)$ are not (much) greater than $\sum_{i=1}^mf_{t_i}(\bx)$.
This is a little different from the traditional for this type of results (for a finite number of functions) conditions guaranteeing that the values of the individual functions $f_{t_i}$ at $x_i$ and $\bx$ are close.
Such conditions easily follow from \eqref{T6.2-02} and \eqref{T6.2-01} if one additionally assumes that the functions $f_t$ $(t\in T)$ are \emph{jointly lower semicontinuous} at $\bx$ in the following sense:
\begin{gather}
\label{below}
\inf_{\de>0}\;\lim_{S\uparrow T,\;|S|<\infty}\; \sum_{t\in S} \Big(f_t(\bx)-\inf_{B_\de(\bx)}f_t\Big)\le0.
\end{gather}
Note that the limit in \eqref{below} exists, since $\{\sum_{t\in S} (f_t(\bx)-\inf_{B_\de(\bx)}f_t)\}_{S\in\mathcal{F}(T)}$ is
nondecreasing.
If $T$ is finite, the joint lower semicontinuity is equivalent to the lower semicontinuity of all the individual functions $f_t$ $(t\in T)$, but in infinite settings, it is in general stronger.
It is not difficult to show that under the assumption of \eqref{below} conditions \eqref{T6.2-02} and \eqref{T6.2-01} can be replaced in Theorem~\ref{T6.1} by the more traditional ones: $|f_{t_i}(x_i)-f_{t_i}(\bx)|<\eps$ $(i=1,\ldots,m)$.

\item
It is not difficult to show that
the necessary conditions in Theorem~\ref{T6.1} characterize not only local
minimum points but also a broader class of
stationary points (which can be defined in a natural way).
The proof of the theorem can be easily adjusted to cover the case of
$\eps$-minimum points (with some fixed $\eps>0$).
See \cite[Theorem~6.7 and Corollary~6.10]{FabKruMeh24}, where the case $T=\{1,2\}$ was studied.

\item
The nonlocal \eqref{T6.1P11} and local \eqref{T6.1P12} primal necessary conditions
established in the proof of Theorem~\ref{T6.1} can be of interest.
They are valid in the setting of an arbitrary complete metric space (with distances in place of the norms).
Similar conditions are explicitly formulated in \cite[Theorem~6.6]{FabKruMeh24} for the case $T=\{1,2\}$.
\end{enumerate}
\end{remark}

Next, we
apply Theorem~\ref{T6.1}
in the context
of subdifferential calculus.
We consider a collection of functions $\{f_{t}\}_{t\in T}$ and their sum $\overline\sum_{t\in T}f_{t}$ defined by \eqref{f}.
The next statement presents a generalized version of the (strong) fuzzy sum rule for Fr\'{e}chet subdifferentials.

\begin{theorem}
\label{T7.1}
Let $X$ be a Banach space, $\bx\in\dom\overline\sum_{t\in T}f_{t}$, and $x^*\in{\sdf}\big(\overline\sum_{t\in T}f_{t}\big)(\bar x)$.
Suppose that
either $X$ is Asplund or $f_t$ $(t\in T)$ are convex, and
there is a number $\de>0$ such that
$f_t$ $(t\in T)$ are lower semicontinuous and bounded from below on $B_\de(\bx)$, and $\{f_{t}\}_{t\in T}$ is firmly
uniformly
lower semicontinuous on $B_\de(\bx)$.
Then,
for any
{$\eps>0$ and}
$S_0\in\mathcal{F}(T)$,
there exist an $S:=\{t_1,\ldots,t_m\}\in\mathcal{F}(T)$ with some $m\in\N$, and points
$x_1,\ldots,x_m\in B_\eps(\bx)$ such that $S_0\subset S$,
condition \eqref{T6.2-01} is satisfied, and
\begin{gather}
\label{SR-2}
x^*\in\sum_{i=1}^m{\sdf}f_{t_i}(x_i)+\varepsilon\B^*.
\end{gather}
\end{theorem}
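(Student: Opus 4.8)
The plan is to reduce the statement to the optimality conditions of Theorem~\ref{T6.1}, applied to the given collection enlarged by one Lipschitz function that encodes the subgradient $x^*$. Fix $\eps>0$ and $S_0\in\mathcal{F}(T)$, and write $g:=\overline\sum_{t\in T}f_t$. Choose $\eta>0$ and $\eps'>0$ with $\eta+\eps'<\eps$ and $(1+\norm{x^*})\eps'<\eps$. Since $x^*\in\sdf g(\bx)$, there is a $\de'\in(0,\de]$ with
\[
g(x)-g(\bx)-\ang{x^*,x-\bx}+\eta\norm{x-\bx}\ge0\quad\text{for all }x\in B_{\de'}(\bx).
\]
Pick $t_0\notin T$ and set $f_{t_0}(x):=-\ang{x^*,x-\bx}+\eta\norm{x-\bx}$; this is convex and Lipschitz on $X$ (hence uniformly continuous), $f_{t_0}(\bx)=0$, and every element of $\sdf f_{t_0}(x)$ has the form $-x^*+e^*$ with $\norm{e^*}\le\eta$. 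Writing $G:=\overline\sum_{t\in T\cup\{t_0\}}f_t=g+f_{t_0}$, the displayed inequality says exactly that $\bx$ minimizes $G$ over $B_{\de'}(\bx)$, with $G(\bx)=g(\bx)$.

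By Corollary~\ref{C2.1} (applied with $B_\de(\bx)$ as the underlying space), the enlarged collection $\{f_t\}_{t\in T\cup\{t_0\}}$ is firmly uniformly lower semicontinuous on $B_\de(\bx)$. I would then fix $\rho_0\in(0,\de']$ and check the three hypotheses of Theorem~\ref{T6.1} for this collection on $B_{\rho_0}(\bx)$. Lower semicontinuity and boundedness below are inherited from $B_\de(\bx)$, and $\bx$ minimizes $G$ over $B_{\rho_0}(\bx)$ by the previous paragraph. The only nontrivial point is the uniform lower semicontinuity on $B_{\rho_0}(\bx)$, i.e. $G(\bx)\le\Lambda_{B_{\rho_0}(\bx)}(\{f_t\}_{t\in T\cup\{t_0\}})$. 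For a small-diameter family $\{x_t\}_{t\in S}\subset B_{\rho_0}(\bx)$ entering the definition of $\Lambda_{B_{\rho_0}(\bx)}$, firm uniform lower semicontinuity on $B_\de(\bx)$ supplies a recoupling point $x\in B_\de(\bx)$ for which $\max_{t\in S}\norm{x-x_t}$ and $G(x)-\sum_{t\in S}f_t(x_t)$ are both arbitrarily small; since the $x_t$ lie in $B_{\rho_0}(\bx)$, the point $x$ lies in $B_{\de'}(\bx)$ once the recoupling distance is below $\de'-\rho_0$, whence local minimality gives $G(\bx)\le G(x)\le\sum_{t\in S}f_t(x_t)+$ (a quantity that can be made arbitrarily small). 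Passing to the $\liminf$/$\limsup$ over the directed set $\mathcal{F}(T\cup\{t_0\})$ yields the required inequality.

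With the hypotheses verified, I would apply Theorem~\ref{T6.1}\,\ref{T6.1.2} on $B_{\rho_0}(\bx)$ when $X$ is Asplund, and Theorem~\ref{T6.1}\,\ref{T6.1.1} together with $\sdc f_t=\sdf f_t$ for convex $f_t$ when the $f_t$ are convex, in both cases with tolerance $\eps'$ and initial index set $S_0\cup\{t_0\}$. This produces $S'=\{t_0,t_1,\dots,t_m\}\supset S_0\cup\{t_0\}$ and points $x_0,x_1,\dots,x_m\in B_{\eps'}(\bx)\subset B_\eps(\bx)$ with
\[
0\in\sdf f_{t_0}(x_0)+\sum_{i=1}^m\sdf f_{t_i}(x_i)+\eps'\B^*\AND\sum_{i=0}^m\big(f_{t_i}(x_i)-f_{t_i}(\bx)\big)<\eps'
\]
(the value inequality being $\le0$ in the convex case). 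Since each element of $\sdf f_{t_0}(x_0)$ equals $-x^*+e^*$ with $\norm{e^*}\le\eta$, the first relation gives $x^*\in\sum_{i=1}^m\sdf f_{t_i}(x_i)+(\eta+\eps')\B^*\subset\sum_{i=1}^m\sdf f_{t_i}(x_i)+\eps\B^*$, which is \eqref{SR-2}. For \eqref{T6.2-01}, the bound $f_{t_0}(x_0)\ge-\norm{x^*}\norm{x_0-\bx}\ge-\norm{x^*}\eps'$ turns the value estimate into $\sum_{i=1}^m(f_{t_i}(x_i)-f_{t_i}(\bx))<(1+\norm{x^*})\eps'<\eps$, with $x_1,\dots,x_m\in B_\eps(\bx)$ and $S:=S'\setminus\{t_0\}\supset S_0$. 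The main obstacle is precisely the uniform lower semicontinuity check on the small ball $B_{\rho_0}(\bx)$: this is where firmness (rather than plain uniform lower semicontinuity) is indispensable, since one must recouple a near-optimal decoupled family to a single point that stays inside $B_{\de'}(\bx)$, where only the local—not global—minimality of $G$ is available.
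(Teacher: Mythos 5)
Your proposal is correct, and its skeleton is the same as the paper's: perturb the collection by the Lipschitz convex function $f_{t_0}$ encoding $x^*$ (the paper takes $f_{t_0}(x):=-\ang{x^*,x}+\eps'\norm{x-\bx}$ with $\eps':=\eps/(\norm{x^*}+2)$, which differs from yours only by a constant and the choice of tolerances), observe that $\bx$ becomes a local minimum of the enlarged upper sum, preserve firm uniform lower semicontinuity via Corollary~\ref{C2.1}, apply Theorem~\ref{T6.1} (part~(ii) in the Asplund case, part~(i) with $\sdc f=\sdf f$ in the convex case), and absorb $\sdf f_{t_0}(x_0)\subset-x^*+\eps'\overline\B^*$ into the error ball, with the same bookkeeping for \eqref{T6.2-01}. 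Where you genuinely depart from the paper is the localization step, and there your treatment is more careful. The paper writes ``taking a smaller $\de$ if necessary, we can assume that $\bx$ is a minimum \ldots over $B_\de(\bx)$'' and then invokes Corollary~\ref{C2.1} and Proposition~\ref{P3.2}; this glosses over the fact that (firm) uniform lower semicontinuity on a set does not automatically pass to subsets, since the recoupling point $x$ in \eqref{Th0} is then confined to the smaller set --- within the paper this issue is addressed only by the quasi machinery (Proposition~\ref{P3.15}, Corollary~\ref{C3.5}, Theorems~\ref{T6.01} and~\ref{T5.2}). Your direct argument --- recouple a decoupled family lying in $B_{\rho_0}(\bx)$ by firmness on $B_\de(\bx)$ to a point $x$ with arbitrarily small recoupling distance, so that $x\in B_{\de'}(\bx)$ where the local minimality of $\bx$ applies, and conclude plain uniform lower semicontinuity of the enlarged collection on $B_{\rho_0}(\bx)$ --- closes exactly this gap, and is in substance the content of Corollary~\ref{C3.5} combined with Theorem~\ref{T6.01}, made self-contained without the ``quasi'' notions; what it buys is a proof of Theorem~\ref{T7.1} that does not silently rely on Section~5. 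The quantifiers work out because each inner quantity in \eqref{Th0} is nonnegative, so ${\Theta}=0$ yields an $S_0$ for which the recoupling estimate holds for \emph{all} finite $S\supset S_0$, which is precisely what lower-bounding the $\liminf$ defining $\Lambda_{B_{\rho_0}(\bx)}$ requires. One trivial fix: take $\rho_0\in(0,\de')$ strictly rather than $\rho_0\in(0,\de']$, since your margin $\de'-\rho_0$ for the recoupling distance must be positive.
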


\begin{proof}
Let
{$\eps>0$ and}
$t_0\notin T$.
Set $\eps':=\eps/(\|x^*\|+2)$ and  $f_{t_0}(x):=-\ang{x^*,x}+\eps'\|x-\bx\|$ for all $x\in X$.
For any $x\in X$, we have
$$\overline\sum_{t\in T\cup\{t_0\}}f_{t}(x) =\limsup_{S\uparrow(T\cup\{t_0\}),\;|S|<\infty}\, \sum_{t\in S}f_{t}(x)
=\limsup_{S\uparrow T,\;|S|<\infty}\,\sum_{t\in S} f_{t}(x)-\ang{x^*,x}+\eps'\|x-\bx\|.$$
By definition of the \Fr\ subdifferential,
\begin{multline*}
\liminf_{\bx\ne x\to\bx} \frac{
\big(\overline\sum_{t\in T\cup\{t_0\}}f_{t}\big)(x)- \big(\overline\sum_{t\in T\cup\{t_0\}}f_{t}\big)(\bx)} {\|x-\bx\|}
\\=\liminf_{\bx\ne x\to\bx} \frac{\big(\overline\sum_{t\in T}f_{t}\big)(x)-\big(\overline\sum_{t\in T}f_{t}\big)(\bx)-\ang{x^*,x-\bx}}{\|x-\bx\|} +\eps'\ge\eps'.
\end{multline*}
Hence, $\bx$ is a local minimum of $\overline\sum_{t\in T\cup\{t_0\}}f_{t}$.
Taking a smaller $\de$ if necessary, we can assume that $\bx$ is a minimum of $\overline\sum_{t\in T\cup\{t_0\}}f_{t}$ over $B_\de(\bx)$.
Since $f_{t_0}$ is uniformly continuous, it follows from
Corollary~\ref{C2.1} and Proposition~\ref{P3.2}
that $\{f_t\}_{t\in T\cup\{t_0\}}$ is uniformly
lower semicontinuous on $B_\de(\bx)$.
If $X$ is Asplund, then,
by the second part of Theorem~\ref{T6.1} there exist
an
$S:=\{t_1,\ldots,t_m\}\in\mathcal{F}(T)$ with some $m\in\N$ such that
$S_0\subset S$ and
$S\cup\{t_0\}\in\mathcal{F}(T\cup\{t_0\})$, and points $x_0,\ldots,x_m\in B_{\eps'}(\bx)\subset B_{\eps}(\bx)$
such that
\begin{gather}
\notag
\sum_{i=0}^m\big(f_{t_i}(x_i)-f_{t_i}(\bx)\big) <\ang{x^*,x_0
-\bx}-\eps'\|x_0-\bx\|+\eps' \le(\|x^*\|+1)\eps'<\eps.
\\
\label{T7.1P4}
0\in\sum_{i=0}^m{\sdf} f_{t_i}(x_i)+\eps'\B^*.
\end{gather}
If $f_t$ $(t\in T)$ are convex, the same conclusion follows from the first part of Theorem~\ref{T6.1} taking into account that in the convex case Clarke and \Fr\ subdifferentials coincide.
Observe that $\sd f_{t_0}(x_0)\subset-x^*+\eps'\overline\B^*$.
It follows from \eqref{T7.1P4} that
\begin{equation*}
d\Big(x^*,\sum_{i=1}^m{\sdf} f_{t_i}(x_i)\Big)<2\eps'\le\eps.
\end{equation*}
This completes the proof.
\qed
\end{proof}
\if{
\AK{11/08/23.
The general Banach space version of Theorem~\ref{T7.1} in terms of Clarke subdifferentials can hopefully be proved too, but this is going to require a little different machinery.

At the moment, I cannot drop the assumption in (ii) that $T$ is finite.
Unfortunately this diminishes the value of  quasiuniform lower semicontinuity in infinite settings. :-(
}
}\fi

\section{Quasiuniform lower semicontinuity}
\label{quasi}

Now we briefly discuss more subtle ``quasi'' versions of the uniform infimum and uniform lower semicontinuity properties studied in Section~\ref{S3}.
In this section, $X$ is a metric space
{and $U$ is a subset of $X$}.
The definitions below employ \emph{essentially interior} subsets of a given set.
Recall from \cite[Definition~2.1]{FabKruMeh24} that, {a subset $V\subset U$ is an essentially interior subset of $U$ if $B_\rho(V)\subset U$ for some $\rho>0$.
The latter condition is obviously equivalent to $\inf_{x\in V}d(x,X\setminus U)>0$.
The collection of all essentially interior subsets of $U$ is denoted by $EI(U)$.
Note that $V\in EI(X)$ for any $V\subset X$.}

The next simple lemma is going to be useful.

\begin{lemma}
\label{L2.1}
Let $g:X\to\R_{\infty}$.
{Then}
$\inf_{V\in EI(U)}\inf_Vg=\inf_{\Int U}g$.
As a consequence, if $U$ is open, then
$\inf_{V\in EI(U)}\inf_Vg=\inf_Ug$.
\end{lemma}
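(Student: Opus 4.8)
The plan is to prove the equality $\inf_{V\in EI(U)}\inf_Vg=\inf_{\Int U}g$ by establishing the two opposite inequalities, using throughout the defining property that $V\in EI(U)$ means $B_\rho(V)\subset U$ for some $\rho>0$.

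First I would show $\inf_{V\in EI(U)}\inf_Vg\ge\inf_{\Int U}g$. The key remark is that every essentially interior subset of $U$ is in fact contained in $\Int U$: if $V\in EI(U)$ with $B_\rho(V)\subset U$, then for each $v\in V$ the ball $B_\rho(v)$ lies in $U$, so $v\in\Int U$, whence $V\subset\Int U$. It follows that $\inf_Vg\ge\inf_{\Int U}g$ for every $V\in EI(U)$, and taking the infimum over all such $V$ gives the inequality.

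For the reverse inequality $\inf_{V\in EI(U)}\inf_Vg\le\inf_{\Int U}g$, I would test the outer infimum against singletons. Given any $x\in\Int U$, pick $\rho>0$ with $B_\rho(x)\subset U$; then $V:=\{x\}$ satisfies $B_\rho(V)=B_\rho(x)\subset U$, so $\{x\}\in EI(U)$ and $\inf_Vg=g(x)$. Hence $\inf_{V\in EI(U)}\inf_Vg\le g(x)$ for each $x\in\Int U$, and passing to the infimum over $x\in\Int U$ yields the bound. Combining the two inequalities proves the main identity, and the consequence follows at once: when $U$ is open we have $\Int U=U$, so $\inf_{V\in EI(U)}\inf_Vg=\inf_{\Int U}g=\inf_Ug$.

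There is no real obstacle in this argument; the only point deserving a moment's attention is the degenerate case $\Int U=\es$. Here no nonempty set can belong to $EI(U)$ (such a set would have to sit inside $\Int U$ by the first step), so the only member of $EI(U)$ is $\es$ itself, and both sides reduce to $\inf_\es g=+\infty$ under the standing convention $\inf\es_{\R}=+\infty$; thus the identity continues to hold.
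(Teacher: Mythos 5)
Your proposal is correct and follows essentially the same route as the paper's proof: the observation that every $V\in EI(U)$ is contained in $\Int U$ gives one inequality, and testing against singletons $\{x\}\in EI(U)$ for $x\in\Int U$ gives the other. Your extra remark on the degenerate case $\Int U=\es$ (where both sides equal $+\infty$ under the convention $\inf\es_{\R}=+\infty$) is a harmless addition the paper leaves implicit.
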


\begin{proof}
For any $V\in EI(U)$, we have $V\subset\Int U$.
Hence,
$\inf_{\Int U}g\le\inf_Vg$,
and consequently,
$\inf_{\Int U}g\le\inf_{V\in EI(U)}\inf_Vg$.
On the other hand, given any $x\in\Int U$, we have $\{x\}\in EI(U)$.
Hence,
$\inf_{V\in EI(U)}\inf_Vg\le g(x)$,
and consequently,
$\inf_{V\in EI(U)}\inf_Vg\le\inf_{\Int U}g$.
\qed
\end{proof}

The ``quasi'' analogues of definitions
\eqref{La0} and \eqref{Th0} look, respectively, as follows:
\begin{align}
\label{Ladag}
{\Lambda}_U^\dag(\{f_{t}\}_{t\in T}):=&
\inf_{V\in EI(U)}\liminf_{S\uparrow T,\;|S|<\infty}\;
\liminf\limits_{\substack{\diam\{x_t\}_{t\in S}\to0\\ x_t\in V\,(t\in S)}}\; \sum_{t\in S}f_{t}(x_{t}),
\\
\notag
{\Theta}_U^\dag(\{f_t\}_{t\in T}):=&\sup_{V\in EI(U)} \limsup_{S\uparrow T,\;|S|<\infty}\limsup_{\substack{\diam\{x_t\}_{t\in S}\to0\\ x_t\in(\dom f_t)\cap V\,(t\in S)}}
\\
\label{Thdag}
&\hspace{20mm}
\inf_{x\in U} \max\Big\{\max_{t\in S}d(x,x_t),\overline{\sum_{t\in T}}f_t(x)-\sum_{t\in S}f_{t}(x_{t})\Big\}.
\end{align}
Of course, the above quantities can only be useful if $\Int U\ne\es$ (which is the case in the situation of our main interest when $U$ is a \nbh\ of a given point).
By analogy with \eqref{La0-} and \eqref{La0}, we call \eqref{Ladag} the \emph{quasiuniform infimum}
of $\{f_{t}\}_{t\in T}$ over~$U$.

\begin{remark}
If $U=X$, the quantities ${\Lambda}_U^\dag(\{f_{t}\}_{t\in T})$ and
${\Theta}_U^\dag(\{f_t\}_{t\in T})$ coincide with ${\Lambda}(\{f_{t}\}_{t\in T})$
and ${\Theta}(\{f_t\}_{t\in T})$, respectively.
\end{remark}


The next definition gives ``quasi'' analogues of the properties in Definition~\ref{D3.2}.

\begin{definition}
\label{D3.14}
The collection $\{f_{t}\}_{t\in T}$ is
\begin{enumerate}
\item
\label{D3.14.1}
\emph{quasiuniformly lower semicontinuous} on $U$ if
$\inf_U\overline\sum_{t\in T}f_{t} \le{\Lambda}_U^\dag(\{f_{t}\}_{t\in T})$;
\item
\label{D3.14.2}
\emph{firmly quasiuniformly lower semicontinuous}
on $U$ if ${\Theta}_U^\dag(\{f_{t}\}_{t\in T})=0$.
\end{enumerate}
\end{definition}

It follows straightforwardly from comparing definitions
\eqref{La0}, \eqref{Th0}, \eqref{Ladag} and \eqref{Thdag} that
\begin{gather*}
{\Lambda}_U^\dag(\{f_{t}\}_{t\in T})\ge{\Lambda}_U(\{f_{t}\}_{t\in T}),\\
\inf_U\overline\sum_{t\in T}f_{t} -{\Lambda}_U^\dag(\{f_{t}\}_{t\in T}) \le{\Theta}_U^\dag(\{f_{t}\}_{t\in T}) \le{\Theta}_U(\{f_{t}\}_{t\in T});
\end{gather*}
hence, the next proposition holds true.

\begin{proposition}
\label{P3.15}
\begin{enumerate}
\item
\label{P3.15.1}
If $\{f_{t}\}_{t\in T}$ is uniformly (firmly uniformly) lower semicontinuous on $U$, then it is quasiuniformly (firmly quasiuniformly) lower semicontinuous on~$U$.
\item
\label{P3.15.2}
If $\{f_{t}\}_{t\in T}$ is firmly quasiuniformly lower semicontinuous on $U$, then it is quasiuniformly lower semicontinuous on $U$.
\end{enumerate}
\end{proposition}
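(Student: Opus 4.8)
The plan is to obtain the whole proposition as a formal consequence of the three comparison inequalities displayed immediately before the statement, namely $\Lambda_U^\dag(\{f_t\}_{t\in T})\ge\Lambda_U(\{f_t\}_{t\in T})$ and $\inf_U\overline\sum_{t\in T}f_t-\Lambda_U^\dag(\{f_t\}_{t\in T})\le\Theta_U^\dag(\{f_t\}_{t\in T})\le\Theta_U(\{f_t\}_{t\in T})$, supplemented by one elementary observation: $\Theta_U^\dag(\{f_t\}_{t\in T})\ge0$. This nonnegativity holds because, in \eqref{Thdag}, the term $\max_{t\in S}d(x,x_t)$ is nonnegative, so that the inner quantity $\inf_{x\in U}\max\{\cdots\}$ is a nonnegative number for every admissible configuration (the standing assumption $\dom\overline\sum_{t\in T}f_t\ne\es$ guarantees that admissible configurations exist); taking the iterated $\limsup$ and then the supremum over $V\in EI(U)$ preserves this, and if $EI(U)=\es$ the value equals $\sup\es_{\R_+}=0$. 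This is the same mechanism that makes firm uniform lower semicontinuity in Definition~\ref{D3.2}\,\ref{D3.2.2} a condition of the form $\Theta=0$ rather than $\Theta\le0$.

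For the first implication in item~\ref{P3.15.1}, I would assume that $\{f_t\}_{t\in T}$ is uniformly lower semicontinuous on $U$, i.e.\ $\inf_U\overline\sum_{t\in T}f_t\le\Lambda_U(\{f_t\}_{t\in T})$, and chain this with $\Lambda_U(\{f_t\}_{t\in T})\le\Lambda_U^\dag(\{f_t\}_{t\in T})$ to get $\inf_U\overline\sum_{t\in T}f_t\le\Lambda_U^\dag(\{f_t\}_{t\in T})$, which is precisely Definition~\ref{D3.14}\,\ref{D3.14.1}. For the firm variant, I would assume $\Theta_U(\{f_t\}_{t\in T})=0$; then the right half of the second chain yields $\Theta_U^\dag(\{f_t\}_{t\in T})\le\Theta_U(\{f_t\}_{t\in T})=0$, and combining this with the preliminary nonnegativity observation gives $\Theta_U^\dag(\{f_t\}_{t\in T})=0$, i.e.\ firm quasiuniform lower semicontinuity on $U$.

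Item~\ref{P3.15.2} is then the one-line consequence of the left inequality in the second chain: firm quasiuniform lower semicontinuity on $U$ means $\Theta_U^\dag(\{f_t\}_{t\in T})=0$, whence $\inf_U\overline\sum_{t\in T}f_t-\Lambda_U^\dag(\{f_t\}_{t\in T})\le0$, which is again Definition~\ref{D3.14}\,\ref{D3.14.1}. I do not anticipate a genuine obstacle here: once the three comparison inequalities are granted (they constitute the substantive content and follow from the monotonicity of $\liminf$ and $\limsup$ under shrinking the admissible set of decoupled points $x_t$ from $U$ to $V\in EI(U)$, together with the supremum/infimum over $V$), the proposition is a purely formal chaining of inequalities. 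The only point demanding a word of care is upgrading $\Theta_U^\dag\le0$ to $\Theta_U^\dag=0$ in the firm case, which is exactly why I isolate the nonnegativity of $\Theta_U^\dag$ at the outset.
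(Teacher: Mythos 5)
Your proposal is correct and matches the paper's argument exactly: the paper derives Proposition~\ref{P3.15} as an immediate formal consequence of the two displayed comparison chains $\Lambda_U^\dag\ge\Lambda_U$ and $\inf_U\overline\sum_{t\in T}f_t-\Lambda_U^\dag\le\Theta_U^\dag\le\Theta_U$, just as you do. Your only addition is making explicit the nonnegativity $\Theta_U^\dag(\{f_t\}_{t\in T})\ge0$ (needed to upgrade $\Theta_U^\dag\le0$ to $\Theta_U^\dag=0$ in the firm case), which the paper leaves implicit since the distance term in \eqref{Thdag} forces it; this is a sound, if minor, refinement rather than a different route.
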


\begin{remark}
\label{R3.17}
\begin{enumerate}
\item
By definitions \eqref{Ladag} and \eqref{f}, and Lemma~\ref{L2.1}, we have
\begin{gather}
\label{24}
{\Lambda}_U^{\dag}(\{f_{t}\}_{t\in T}) \le\inf_{V\in EI(U)}\liminf_{S\uparrow T,\;|S|<\infty}\inf_V\sum_{t\in S}f_{t} \le\inf_{V\in EI(U)}\inf_V\overline{\sum_{t\in T}}f_{t} =\inf_{\Int U}\overline{\sum_{t\in T}}f_{t},
\end{gather}
and consequently,
when $U$ is open,
the inequality in Definition~\ref{D3.14}\,\ref{D3.14.1} can only hold as equality in which case
the latter inequality can serve as a qualification condition.
\item
Employing \eqref{La0-} or \eqref{La0}, quantity \eqref{Ladag} can be represented as follows:
\begin{gather}
\label{R3.17-2}
{\Lambda}_{U}^\dag(\{f_{t}\}_{t\in T})
=\inf_{V\in EI(U)}{\Lambda}_{V}(\{f_t\}_{t\in T}) =\inf_{V\in EI(U)}\liminf_{S\uparrow T,\;|S|<\infty}\;
{\Lambda}_{V}(\{f_t\}_{t\in S}).
\end{gather}
Quantity \eqref{Thdag} does not in general allow similar recursive representations because the two sums in its \RHS\ are over different index sets and, additionally, the $\inf$ and the second $\limsup$ operations involve different subsets $U$ and $V$.
\item
\label{R3.17.3}
Denote
\begin{gather}
\label{Dedag}
{\Delta}_{U}^\dag(\{f_{t}\}_{t\in T}):=
\sup_{V\in EI(U)} \limsup_{S\uparrow T,\;|S|<\infty}\Big(\inf_{U} \sum_{t\in S} f_t- {\Lambda}_{V}\big(\{f_t\}_{t\in S}\big)\Big).
\end{gather}
Thanks to our standing conventions, the \RHS\ of \eqref{Dedag} is well defined.
Note that the infimum in \eqref{Dedag} is taken over $U$, while ${\Lambda}_{V}(\{f_t\}_{t\in S})$ involves a smaller set $V$.
In view of \eqref{f} and \eqref{R3.17-2}, we have
\begin{gather}
\label{R3.17-4}
{\Delta}_{U}^\dag(\{f_{t}\}_{t\in T}) \le\inf_{U}\overline\sum_{t\in T}f_t- {\Lambda}_{U}^\dag(\{f_t\}_{t\in T}),
\end{gather}
and consequently, the inequality in Definition~\ref{D3.14}\,\ref{D3.14.1} implies 
\begin{gather}
\label{R3.17-5}
{\Delta}_U^\dag(\{f_{t}\}_{t\in T})\le0.
\end{gather}
The latter condition defines a kind of \emph{weak} quasiuniform lower semicontinuity.
If $T$ is finite, inequality \eqref{R3.17-4} holds as equality, and the two properties coincide.

In view of \eqref{De0} and \eqref{Dedag}, we have
${\Delta}_{U}^\dag(\{f_{t}\}_{t\in T})\le{\Delta}_{U}(\{f_{t}\}_{t\in T})$,
and consequently, inequality
${\Delta}_{U}(\{f_{t}\}_{t\in T})\le0$ (cf. Remark~\ref{R3.2}\,\ref{R3.2.3}) implies
${\Delta}_{U}^\dag(\{f_{t}\}_{t\in T})\le0$.

\item
\label{R3.17.4}
In view of \eqref{Thdag} and \eqref{f}, we have
\begin{align}
\notag
{\Theta}_{U}^{\dag}(\{f_{t}\}_{t\in T})\ge
\sup_{V\in EI(U)} \limsup_{S\uparrow T,\;|S|<\infty} {\Theta}_{U,V}(\{f_{t}\}_{t\in S}),
\end{align}
where ${\Theta}_{U,V}(\{f_t\}_{t\in S})$ in \eqref{Thdag} is a two-parameter modification of \eqref{Th} depending on subsets $V\subset U\subset X$:
\begin{align}
\hspace{-2mm}
\label{ThV}
{\Theta}_{U,V}(\{f_t\}_{t\in S}) :=&\limsup_{\substack{\diam\{x_t\}_{t\in S}\to0\\ x_t\in\dom f_t\cap V\;(t\in S)}} \inf_{x\in U} \max\Big\{\max_{t\in S}d(x,x_t),\sum_{t\in S}(f_t(x)-f_{t}(x_{t}))\Big\}.
\hspace{-2mm}
\end{align}
\if{
\AH{09-08-2024. I suppose that here we also can remove the reference to the terms $\dom f_t$.}
\AK{12/08/24.
I am not sure.}
}\fi
Hence, the equality in Definition~\ref{D3.14}\,\ref{D3.14.2} implies
\begin{gather}
\label{R3.17-6}
\ds\sup_{V\in EI(U)} \limsup_{S\uparrow T,\;|S|<\infty} {\Theta}_{U,V}(\{f_{t}\}_{t\in S})=0.
\end{gather}
This condition defines a kind of \emph{weak} firm quasiuniform lower semicontinuity.
\end{enumerate}
\end{remark}

The following estimates are immediate consequences of representations \eqref{R3.17-2} and definition~\eqref{Dedag}:
\begin{align}
\label{28}
\liminf_{S\uparrow T,\;|S|<\infty}\inf_{U}\sum_{t\in S} f_t\le
{\Lambda}_{U}^\dag(\{f_t\}_{t\in T})+
{\Delta}_{U}^\dag(\{f_{t}\}_{t\in T})\le \limsup_{S\uparrow T,\;|S|<\infty}\inf_{U}\sum_{t\in S} f_t.
\end{align}
They yield necessary and sufficient conditions for the quasiuniform lower semicontinuity.

\begin{proposition}
\label{P3.18}
\begin{enumerate}
\item
\label{P3.18.1}
If ${\Delta}_{U}^\dag(\{f_{t}\}_{t\in T})\le0$ and
{$\overline\sum_{t\in T}f_{t}$ is $\inf$-stable on $U$}
in the sense of \eqref{P3.4-1}, then $\{f_{t}\}_{t\in T}$ is quasiuniformly lower semicontinuous on $U$.
\item
\label{P3.18.2}
If $U$ is open and $\{f_{t}\}_{t\in T}$ is quasiuniformly lower semicontinuous on $U$, then ${\Delta}_{U}^\dag(\{f_{t}\}_{t\in T})\le0$ and
{$\overline\sum_{t\in T}f_{t}$ is $\inf$-quasistable on $U$}
in the sense that
\begin{gather}
\label{P3.18-2}
\inf_U\overline{\sum_{t\in T}}f_{t}
\le\inf_{V\in EI(U)}\liminf_{S\uparrow T,\;|S|<\infty}\inf_V\sum_{t\in S}f_{t}.
\end{gather}
\end{enumerate}
\end{proposition}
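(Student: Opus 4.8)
The plan is to mirror the proof of Proposition~\ref{P3.4}, with the estimate \eqref{28} playing the role that \eqref{14} played there, and the chain \eqref{24} playing the role of \eqref{9}. The one genuinely new ingredient is the openness of $U$ in part~\ref{P3.18.2}: it is exactly what lets Lemma~\ref{L2.1} identify $\inf_{\Int U}$ with $\inf_U$ and thereby close the chain \eqref{24}.

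For part~\ref{P3.18.1} I would argue by chaining inequalities only. Assuming ${\Delta}_{U}^\dag(\{f_{t}\}_{t\in T})\le0$ together with the $\inf$-stability of $\overline\sum_{t\in T}f_t$ on $U$ in the sense of \eqref{P3.4-1}, namely $\inf_U\overline\sum_{t\in T}f_{t}\le\liminf_{S\uparrow T,\;|S|<\infty}\inf_U\sum_{t\in S}f_t$, the first inequality in \eqref{28} gives
\[
\inf_U\overline\sum_{t\in T}f_{t}\le\liminf_{S\uparrow T,\;|S|<\infty}\inf_U\sum_{t\in S}f_t\le{\Lambda}_{U}^\dag(\{f_t\}_{t\in T})+{\Delta}_{U}^\dag(\{f_{t}\}_{t\in T})\le{\Lambda}_{U}^\dag(\{f_t\}_{t\in T}),
\]
which is precisely Definition~\ref{D3.14}\,\ref{D3.14.1}.

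For part~\ref{P3.18.2} I would first invoke openness. Since $U$ is open, Lemma~\ref{L2.1} yields $\inf_{\Int U}\overline\sum_{t\in T}f_t=\inf_U\overline\sum_{t\in T}f_t$, so that \eqref{24} becomes
\[
{\Lambda}_{U}^\dag(\{f_t\}_{t\in T})\le\inf_{V\in EI(U)}\liminf_{S\uparrow T,\;|S|<\infty}\inf_V\sum_{t\in S}f_t\le\inf_U\overline\sum_{t\in T}f_t.
\]
Together with the assumed quasiuniform lower semicontinuity $\inf_U\overline\sum_{t\in T}f_t\le{\Lambda}_{U}^\dag(\{f_t\}_{t\in T})$, every inequality here collapses to an equality, and the equality of the two outer terms is exactly the $\inf$-quasistability \eqref{P3.18-2}. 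To obtain ${\Delta}_{U}^\dag(\{f_{t}\}_{t\in T})\le0$ I would combine the second inequality in \eqref{28} with the elementary bound $\limsup_{S\uparrow T,\;|S|<\infty}\inf_U\sum_{t\in S}f_t\le\inf_U\overline\sum_{t\in T}f_t$, which follows, for each fixed $x\in U$, from $\inf_U\sum_{t\in S}f_t\le\sum_{t\in S}f_t(x)$ by passing to the $\limsup$ over $S$ and then the infimum over $x$, exactly as in the remark following Proposition~\ref{P3.4}. This produces ${\Lambda}_{U}^\dag(\{f_t\}_{t\in T})+{\Delta}_{U}^\dag(\{f_{t}\}_{t\in T})\le\inf_U\overline\sum_{t\in T}f_t={\Lambda}_{U}^\dag(\{f_t\}_{t\in T})$, whence ${\Delta}_{U}^\dag(\{f_{t}\}_{t\in T})\le0$ after cancelling the term ${\Lambda}_{U}^\dag(\{f_t\}_{t\in T})$.

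The main obstacle I anticipate is the bookkeeping created by the two sets $U$ and $V$: unlike in the non-quasi Proposition~\ref{P3.4}, the infimum in ${\Delta}_{U}^\dag$ is over $U$ while $\Lambda_V$ is over an essentially interior $V\subset U$, so I must apply \eqref{28} and \eqref{24} verbatim and never with mismatched domains. Openness is what rescues part~\ref{P3.18.2}: without it one controls only $\inf_{\Int U}$, which may strictly exceed $\inf_U$, and the collapsing chain would break. Finally, the cancellation yielding ${\Delta}_{U}^\dag\le0$ is to be read in the extended arithmetic and is clean whenever ${\Lambda}_{U}^\dag(\{f_t\}_{t\in T})$ is finite, which is the case of interest; the degenerate value ${\Lambda}_{U}^\dag(\{f_t\}_{t\in T})=-\infty$ would need a separate, routine check consistent with the conventions adopted in the paper.
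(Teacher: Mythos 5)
Your proof is correct and follows essentially the same route as the paper's: part (i) is the same chain built from \eqref{P3.4-1} and the first inequality in \eqref{28}, and part (ii) is the same collapse of \eqref{24} to equalities under openness (via Lemma~\ref{L2.1}) together with the second inequality in \eqref{28} and the elementary bound $\limsup_{S\uparrow T,\;|S|<\infty}\inf_U\sum_{t\in S}f_t\le\inf_U\overline{\sum}_{t\in T}f_t$. The only cosmetic difference is that the paper writes the second estimate in difference form, ${\Delta}_{U}^\dag(\{f_{t}\}_{t\in T})\le \limsup_{S\uparrow T,\;|S|<\infty}\inf_{U}\sum_{t\in S} f_t-{\Lambda}_{U}^\dag(\{f_t\}_{t\in T}) \le\inf_{U}\overline{\sum}_{t\in T} f_t-{\Lambda}_{U}^\dag(\{f_t\}_{t\in T})\le0$, so the cancellation of ${\Lambda}_{U}^\dag(\{f_t\}_{t\in T})$ (and the degenerate case you rightly flag) never appears explicitly.
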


\begin{proof}
\begin{enumerate}
\item
Let ${\Delta}_{U}^\dag(\{f_{t}\}_{t\in T})\le0$ and
condition \eqref{P3.4-1} be satisfied (with both infima over~$U$).
Using the first inequality in \eqref{28}, we obtain
\begin{gather*}
\inf_{U}\overline{\sum_{t\in T}} f_t- {\Lambda}_{U}^\dag(\{f_t\}_{t\in T})\le
\liminf_{S\uparrow T,\;|S|<\infty}\inf_U\sum_{t\in S}f_{t}- {\Lambda}_{U}^\dag(\{f_t\}_{t\in T})\le
{\Delta}_{U}^\dag(\{f_{t}\}_{t\in T})\le0;
\end{gather*}
hence, $\{f_{t}\}_{t\in T}$ is quasiuniformly lower semicontinuous on $U$.

\item
Let $U$ be open, and $\{f_{t}\}_{t\in T}$ be quasiuniformly lower semicontinuous on $U$.
It follows from the second inequality in \eqref{28} that
\begin{align*}
{\Delta}_{U}^\dag(\{f_{t}\}_{t\in T})\le \limsup_{S\uparrow T,\;|S|<\infty}\inf_{U}\sum_{t\in S} f_t-
{\Lambda}_{U}^\dag(\{f_t\}_{t\in T}) \le\inf_{U}\overline{\sum_{t\in T}} f_t-
{\Lambda}_{U}^\dag(\{f_t\}_{t\in T})\le0.
\end{align*}
The inequalities in \eqref{24} must hold as equalities.
This implies \eqref{P3.18-2}.
\qed\end{enumerate}
\end{proof}

\begin{remark}
The
{$\inf$-quasistability of the infinite sum}
property in Proposition~\ref{P3.18}\,\ref{P3.18.2} is a particular case of a more general property defined for an arbitrary family of functions $\varphi_{S}:X\to\R_{\infty}$ $(S\in\mathcal{F}(T))$ and the corresponding $\limsup$ function $\varphi_\infty$ given by~\eqref{tf}
by the inequality
\begin{align}
\label{R3.19-1}
\inf_U\varphi_\infty\le\inf_{V\in EI(U)} \liminf_{S\uparrow T,\;|S|<\infty}\inf_V \varphi_S.
\end{align}

Observe from \eqref{tf} and Lemma~\ref{L2.1} that
\begin{align*}
\liminf_{S\uparrow T,\;|S|<\infty}\inf\limits_U \varphi_S\le
\inf_{V\in EI(U)}\liminf_{S\uparrow T,\;|S|<\infty}\inf\limits_V \varphi_S\le&
\inf_{V\in EI(U)}\limsup_{S\uparrow T,\;|S|<\infty}\inf_V \varphi_S
\\\le&
\inf_{V\in EI(U)}\inf_{V}\varphi_\infty=
\inf_{\Int U} \varphi_\infty.
\end{align*}
Thanks to the first inequality,
{the $\inf$-stability}
property \eqref{R3.5-1} implies
the $\inf$-quasi\-stability
\eqref{R3.19-1} while, thanks to the last two inequalities, if $U$ is open,
inequality \eqref{R3.19-1} (hence, also \eqref{P3.18-2})
can only hold as equality, and we have
\begin{align*}
\inf\limits_U\varphi_\infty=
\inf\limits_{V\in EI(U)} \liminf_{S\uparrow T,\;|S|<\infty}\inf_V\varphi_S=
\inf\limits_{V\in EI(U)} \limsup_{S\uparrow T,\;|S|<\infty}\inf_V\varphi_S.
\end{align*}

If condition \eqref{R3.19-1} is satisfied and the infimum in the \LHS\ is attained at some point $\bx\in\Int U$, then $\{\bx\}\in EI(U)$ and we have
$\limsup_{S\uparrow T,\;|S|<\infty}\varphi_S(\bx) =\varphi_\infty(\bx)\le\liminf_{S\uparrow T,\;|S|<\infty}\varphi_S(\bx)$; hence,
$\varphi_\infty(\bx)=\lim_{S\uparrow T,\;|S|<\infty}\varphi_S(\bx)$.
\end{remark}

When checking condition ${\Theta}_{U}^\dag(\{f_{t}\}_{t\in T})=0$ in Definition~\ref{D3.14}\,\ref{D3.14.2}, one can drop the restriction $x\in U$ in definition \eqref{Thdag}.

\begin{proposition}
\label{P2.2}
The collection $\{f_{t}\}_{t\in T}$ is firmly quasiuniformly lower semicontinuous on $U$ if and only if
\begin{multline}
\label{P2.2-1}
\sup_{V\in EI(U)} \limsup_{S\uparrow T,\;|S|<\infty}\limsup_{\substack{\diam\{x_t\}_{t\in S}\to0\\ x_t\in(\dom f_t)\cap V\,(t\in S)}}\\
\inf_{x\in X} \max\Big\{\max_{t\in S}d(x,x_t),\overline{\sum_{t\in T}}f_t(x)-\sum_{t\in S}f_{t}(x_{t})\Big\}=0.
\end{multline}
When $T$ is finite, the last condition is equivalent to
\begin{gather*}
\sup_{V\in EI(U)} \limsup_{\substack{\diam\{x_t\}_{t\in T}\to0\\ x_t\in(\dom f_t)\cap V\,(t\in S)}}
\inf_{x\in X} \max\Big\{\max_{t\in T}d(x,x_t),{\sum_{t\in T}}(f_t(x)-f_{t}(x_{t}))\Big\}=0.
\end{gather*}
\end{proposition}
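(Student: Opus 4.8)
The plan is to exploit that the only difference between the quantity ${\Theta}_U^\dag(\{f_t\}_{t\in T})$ in \eqref{Thdag} and the left-hand side of \eqref{P2.2-1} is the replacement of $\inf_{x\in U}$ by $\inf_{x\in X}$. Since $U\subset X$, for every $S\in\mathcal{F}(T)$ and every admissible family $\{x_t\}_{t\in S}$ the inner infimum over $X$ is no larger than the one over $U$; moreover, because the maximum always contains the nonnegative term $\max_{t\in S}d(x,x_t)$, both inner infima are nonnegative. Passing to the nested $\limsup$'s and the supremum over $V\in EI(U)$ preserves these inequalities, so the left-hand side of \eqref{P2.2-1} is a nonnegative quantity bounded above by ${\Theta}_U^\dag(\{f_t\}_{t\in T})$. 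This immediately settles the easy implication: if $\{f_t\}_{t\in T}$ is firmly quasiuniformly lower semicontinuous on $U$, i.e.\ ${\Theta}_U^\dag(\{f_t\}_{t\in T})=0$, then the left-hand side of \eqref{P2.2-1}, being squeezed between $0$ and $0$, vanishes.

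For the converse I would fix $V\in EI(U)$ and use essential interiority to pick $\rho>0$ with $B_\rho(V)\subset U$. The crux is the following localization: if $x_t\in V$ for all $t\in S$ and the infimum over $x\in X$ is smaller than $\rho$, then any near-optimal $x$ satisfies $\max_{t\in S}d(x,x_t)<\rho$, whence $x\in B_\rho(x_t)\subset B_\rho(V)\subset U$ for every $t$; thus this $x$ already lies in $U$, and the infimum over $X$ coincides with the infimum over $U$. In other words, the $X$- and $U$-versions of the inner expression agree on the region where the $X$-version stays below $\rho$. This step, where the geometry of $EI(U)$ is converted into the admissibility $x\in U$, is the heart of the argument.

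I would then propagate this coincidence through the limit operations. Assuming the left-hand side of \eqref{P2.2-1} is zero forces the corresponding $X$-quantity to vanish for each fixed $V$ (all terms being nonnegative). Hence there is an $S_0\in\mathcal{F}(T)$ and, for each $S\supset S_0$, a threshold $\delta>0$ below which the $X$-inner-expression stays under $\rho$; on that regime the localization above makes the $U$- and $X$-inner-expressions identical, so their inner $\limsup$'s as $\diam\{x_t\}_{t\in S}\to0$ coincide, and since the outer $\limsup$ over the directed set $\mathcal{F}(T)$ depends only on a cofinal tail $S\supset S_0$, the two double limits agree and equal $0$. Taking the supremum over $V\in EI(U)$ yields ${\Theta}_U^\dag(\{f_t\}_{t\in T})=0$. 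The main obstacle I anticipate is purely bookkeeping: making sure the ``for $\diam<\delta$'' coincidence transfers correctly to the inner $\limsup$ (which is an $\inf_\delta\sup$) and that restricting the outer infimum to $S_0\subset S$ does not change the directed-set $\limsup$.

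Finally, for finite $T$ the family $\mathcal{F}(T)$ is directed with greatest element $T$, so $\limsup_{S\uparrow T}\al_S=\al_T$ for any $\{\al_S\}_{S\in\mathcal{F}(T)}$ and $\overline\sum_{t\in T}f_t=\sum_{t\in T}f_t$. Substituting $S=T$ collapses the outer $\limsup$ and turns \eqref{P2.2-1} into the stated single-$\limsup$ condition with $\sum_{t\in T}(f_t(x)-f_t(x_t))$ in place of $\overline\sum_{t\in T}f_t(x)-\sum_{t\in S}f_t(x_t)$, which gives the claimed equivalence.
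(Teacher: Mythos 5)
Your proof is correct and takes essentially the same route as the paper's: the easy direction by squeezing the nonnegative $\inf_{x\in X}$-quantity between $0$ and ${\Theta}_U^\dag(\{f_t\}_{t\in T})$, and the converse by the same localization through $B_\rho(V)\subset U$, which forces any near-optimal $x$ with $\max_{t\in S}d(x,x_t)<\min\{\eps,\rho\}$ into $U$ (the paper phrases this with $\eps':=\min\{\eps,\rho\}$ rather than your coincidence of infima, but the mechanism is identical). Your explicit bookkeeping of the directed-set $\limsup$ (eventual in $S\supset S_0$) and the collapse $\limsup_{S\uparrow T}\al_S=\al_T$ for finite $T$ match what the paper leaves implicit, so there is nothing to fix.
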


\begin{proof}
The ``only if'' part follows from comparing definition \eqref{Thdag} and the \LHS\ of \eqref{P2.2-1}.
Suppose that condition \eqref{P2.2-1} holds true.
Let $\eps>0$ and $V\in EI(U)$, i.e., $B_\rho(V)\subset U$ for some $\rho>0$.
Set $\eps':=\min\{\eps,\rho\}$.
By \eqref{P2.2-1}, for any $S_0\in\mathcal{F}(T)$, there exist an $S\in\mathcal{F}(T)$ and a $\de>0$ such that $S_0\subset S$ and, for any $x_{t}\in\dom f_t\cap V$ $(t\in S)$ with $\diam\{x_{t}\}_{t\in S}<\de$, one can find an $x\in X$ such that
\begin{align*}
\max_{t\in S}d(x,x_{t})<\eps'\le\eps
\AND
\overline{\sum_{t\in T}}f_t(x)-\sum_{t\in S} f_{t}(x_{t})<\eps'\le\eps.
\end{align*}
In particular, $d(x,x_{t})<\eps'\le\rho$ for all $t\in S$; hence, $x\in U$.
Thus,
\begin{gather*}
\inf_{x\in U} \max\Big\{\max_{t\in S} d(x,x_t),\overline{\sum_{t\in T}}f_t(x)-\sum_{t\in S} f_{t}(x_{t})\Big\}<\eps,
\end{gather*}
and it follows from \eqref{Thdag} that ${\Theta}_{U}^\dag(\{f_{t}\}_{t\in T})=0$, i.e., $\{f_{t}\}_{t\in T}$ is firmly quasiuniformly lower semicontinuous on $U$.
The last assertion is obvious.
\qed
\end{proof}

\begin{corollary}
\label{C3.5}
If $\{f_{t}\}_{t\in T}$ is firmly quasiuniformly lower semicontinuous on $U$, then it is firmly quasiuniformly lower semicontinuous on any subset of $U$.
\end{corollary}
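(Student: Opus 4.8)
The plan is to route the argument entirely through the characterization in Proposition~\ref{P2.2}, which I expect to be the decisive tool. The naive approach would be to compare ${\Theta}_W^\dag(\{f_t\}_{t\in T})$ with ${\Theta}_U^\dag(\{f_t\}_{t\in T})$ directly from definition \eqref{Thdag} for a subset $W\subset U$, but this is obstructed. On the one hand, passing from $U$ to the smaller set $W$ shrinks the index family of the outer supremum, since $V\in EI(W)$ (that is, $B_\rho(V)\subset W$ for some $\rho>0$) forces $B_\rho(V)\subset W\subset U$ and hence $V\in EI(U)$; this pushes the value down. On the other hand, the inner infimum in \eqref{Thdag} is taken over $x\in U$, and replacing $U$ by the smaller $W$ enlarges the infimum; this pushes the value up. Because these two effects pull in opposite directions, no monotone comparison is available at the level of \eqref{Thdag}, and this is precisely the main obstacle.

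The fix is to invoke Proposition~\ref{P2.2}, which permits the inner infimum $\inf_{x\in U}$ to be replaced by the set-independent $\inf_{x\in X}$. Accordingly, I would write $\psi(V)$ for the expression to which the outer supremum in the left-hand side of \eqref{P2.2-1} is applied: the double upper limit over $S\uparrow T$ and over $\diam\{x_t\}_{t\in S}\to0$ with $x_t\in(\dom f_t)\cap V$, followed by $\inf_{x\in X}\max\{\max_{t\in S}d(x,x_t),\overline{\sum_{t\in T}}f_t(x)-\sum_{t\in S}f_t(x_t)\}$. The point of this reformulation is that $\psi(V)$ depends only on $V$ together with the fixed data $\{f_t\}_{t\in T}$ and the sum $\overline{\sum_{t\in T}}f_t$, and \emph{not} on the ambient set over which firm quasiuniform lower semicontinuity is being measured. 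By Proposition~\ref{P2.2}, for any set $A$ the collection $\{f_t\}_{t\in T}$ is firmly quasiuniformly lower semicontinuous on $A$ if and only if $\sup_{V\in EI(A)}\psi(V)=0$.

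With this reformulation the conclusion is immediate, so the remaining steps are routine. First I would record that $\psi(V)\ge0$ for every $V$: the infimand $\max\{\max_{t\in S}d(x,x_t),\cdots\}$ is nonnegative because $\max_{t\in S}d(x,x_t)\ge0$, and taking $\inf_{x\in X}$ and then the two upper limits preserves nonnegativity (under the standing conventions). Next, fixing $W\subset U$, the inclusion $EI(W)\subset EI(U)$ noted above yields $\sup_{V\in EI(W)}\psi(V)\le\sup_{V\in EI(U)}\psi(V)$, and the right-hand side equals $0$ by the hypothesis that $\{f_t\}_{t\in T}$ is firmly quasiuniformly lower semicontinuous on $U$ (via Proposition~\ref{P2.2}). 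Combining with $\sup_{V\in EI(W)}\psi(V)\ge0$ gives $\sup_{V\in EI(W)}\psi(V)=0$, and a final application of Proposition~\ref{P2.2} to $W$ delivers firm quasiuniform lower semicontinuity of $\{f_t\}_{t\in T}$ on $W$. The entire content of the corollary is thus the realization that Proposition~\ref{P2.2} decouples the inner expression from the ambient set, converting the statement into a plain monotonicity of a supremum under restriction of its index family.
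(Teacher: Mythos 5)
Your proof is correct and takes essentially the same route as the paper's: the paper also derives the corollary from Proposition~\ref{P2.2}, simply observing that condition \eqref{P2.2-1} remains true when $U$ is replaced by a smaller set. Your write-up just makes explicit the two ingredients the paper leaves implicit, namely the monotonicity $EI(W)\subset EI(U)$ for $W\subset U$ and the nonnegativity of the inner expression, both of which are verified correctly.
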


\begin{proof}
The assertion follows from Proposition~\ref{P2.2}.
It suffices to observe that condition \eqref{P2.2-1} remains true if $U$ is replaced by a smaller set.
\qed
\end{proof}

Similar to its non-quasi analogue, the firm quasiuniform lower semicontinuity property in Definition~\ref{D3.14}\,\ref{D3.14.2} is stable under uniformly continuous perturbations of one of the functions.

\begin{proposition}
\label{P3.21}
Suppose that $\{f_{t}\}_{t\in T}$ is firmly quasiuniformly lower semicontinuous and $g\colon X\to\R$ is uniformly continuous on $U$.
Let $t_0\in T$.
Set $\tilde f_{t_0}:=f_{t_0}+g$ and $\tilde f_{t}:=f_{t}$ for all $t\in T\setminus\{t_0\}$.
Then the collection $\{\tilde f_t\}$ is firmly quasiuniformly lower semicontinuous on $U$.
\end{proposition}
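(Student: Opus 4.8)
The plan is to mimic the proof of Proposition~\ref{P3.7}, using Proposition~\ref{P2.2} to phrase both the hypothesis and the conclusion in the convenient form \eqref{P2.2-1} (with $\inf_{x\in X}$ in place of $\inf_{x\in U}$). Two features distinguish the present situation from the global one: the supremum over essentially interior subsets $V\in EI(U)$ must be carried along, and, since $g$ is merely uniformly continuous on $U$, the auxiliary point produced by the decoupling has to be kept inside $U$ so that the continuity estimate applies. First I would record the elementary facts that $\dom\tilde f_t=\dom f_t$ for every $t$ (as $g$ is real-valued) and that, by cofinality of the finite sets containing $t_0$, $\overline{\sum_{t\in T}}\tilde f_t(x)=\overline{\sum_{t\in T}}f_t(x)+g(x)$, so that for $t_0\in S$ one has $\overline{\sum_{t\in T}}\tilde f_t(x)-\sum_{t\in S}\tilde f_t(x_t)=\overline{\sum_{t\in T}}f_t(x)-\sum_{t\in S}f_t(x_t)+g(x)-g(x_{t_0})$, exactly as in \eqref{P3.7P2}.

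Since every $V$-indexed term in \eqref{P2.2-1} is nonnegative, it suffices to fix an arbitrary $V\in EI(U)$ and an arbitrary $\eps>0$ and to bound the corresponding inner double upper limit by $\eps$. I would choose $\rho>0$ so that $|g(x')-g(x'')|<\eps/2$ whenever $x',x''\in U$ and $d(x',x'')<\rho$, then pick $\rho_V>0$ with $B_{\rho_V}(V)\subset U$ (possible since $V\in EI(U)$), and set $\eps':=\min\{\eps/2,\rho,\rho_V\}$. Applying the hypothesis ${\Theta}_U^\dag(\{f_t\}_{t\in T})=0$ through \eqref{P2.2-1}, for this $V$ the outer value $\limsup_{S\uparrow T,\;|S|<\infty}[\limsup_{\diam\to0}\inf_{x\in X}\max\{\cdots\}]$ equals $0$; hence there is an $S_0^\ast\in\mathcal{F}(T)$ such that, writing $S_0^{\ast\ast}:=S_0^\ast\cup\{t_0\}$, for every $S\in\mathcal{F}(T)$ with $S_0^{\ast\ast}\subset S$ the inner $\limsup_{\diam\to0}$ lies strictly below $\eps'$. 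For each such $S$ this produces a $\de>0$ with the property that for all $x_t\in(\dom f_t)\cap V$ $(t\in S)$ with $\diam\{x_t\}_{t\in S}<\de$ one can find $x\in X$ satisfying $\max_{t\in S}d(x,x_t)<\eps'$ and $\overline{\sum_{t\in T}}f_t(x)-\sum_{t\in S}f_t(x_t)<\eps'$.

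The crucial step is to keep $x$ inside $U$: from $x_{t_0}\in V$ and $d(x,x_{t_0})\le\max_{t\in S}d(x,x_t)<\eps'\le\rho_V$ we get $x\in B_{\rho_V}(V)\subset U$, while $x_{t_0}\in V\subset U$ and $d(x,x_{t_0})<\eps'\le\rho$, so the choice of $\rho$ gives $|g(x)-g(x_{t_0})|<\eps/2$. Combining this with the additive identity above yields $\overline{\sum_{t\in T}}\tilde f_t(x)-\sum_{t\in S}\tilde f_t(x_t)<\eps'+\eps/2\le\eps$ and $\max_{t\in S}d(x,x_t)<\eps'\le\eps$, whence $\inf_{x\in X}\max\{\cdots\}<\eps$ for the perturbed collection. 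Since $(\dom\tilde f_t)\cap V=(\dom f_t)\cap V$, this holds for every admissible $\{x_t\}_{t\in S}$ with $\diam<\de$, so the inner $\limsup_{\diam\to0}$ for $\{\tilde f_t\}$ is $\le\eps$ for every $S\supset S_0^{\ast\ast}$; as $\limsup_{S\uparrow T,\;|S|<\infty}[\cdots]=\inf_{S_0}\sup_{S\supset S_0}[\cdots]\le\sup_{S\supset S_0^{\ast\ast}}[\cdots]\le\eps$, letting $\eps\downarrow0$ and then taking $\sup$ over $V\in EI(U)$ establishes \eqref{P2.2-1} for $\{\tilde f_t\}$, i.e. firm quasiuniform lower semicontinuity on $U$ by Proposition~\ref{P2.2}. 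The main obstacle is the directed-set bookkeeping, namely extracting the smallness in the strong $\inf_{S_0}\sup_{S\supset S_0}$ form (for all large $S$, not merely for one good $S$), together with the verification that the decoupling point $x$ never leaves $U$ — this is precisely what makes uniform continuity of $g$ on $U$, rather than on all of $X$, sufficient.
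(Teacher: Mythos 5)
Your proof is correct and takes essentially the same route as the paper's: the paper applies the hypothesis directly through definition \eqref{Thdag}, whose $\inf_{x\in U}$ already delivers a point $x\in U$, and then uses the same devices you do --- forcing $t_0\in S$, setting $\eps':=\min\{\eps/2,\rho\}$, and exploiting $d(x,x_{t_0})<\rho$ together with the additive identity \eqref{P3.7P2} to absorb the perturbation $g(x)-g(x_{t_0})$. Your only deviation is the round trip through Proposition~\ref{P2.2} (the $\inf_{x\in X}$ form), which obliges you to re-derive $x\in U$ via the essential-interiority radius $\rho_V$ --- exactly the argument already contained in the proof of Proposition~\ref{P2.2} --- so this is harmless but slightly redundant compared with the paper's direct use of \eqref{Thdag}.
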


The proof below is a slight modification of that of Proposition~\ref{P3.7}.

\begin{proof}
Let $\eps>0$ and $V\in EI(U)$.
There exists a $\rho>0$ such that $|g(x')-g(x'')|<\eps/2$ for all $x',x''\in U$ with $d(x',x'')<\rho$.
Set $\eps':=\min\{\eps/2,\rho\}$.
By Definition~\ref{D3.14}\,\ref{D3.14.2} and definition \eqref{Thdag}, for any $S_0\in\mathcal{F}(T)$, there exist an $S\in\mathcal{F}(T)$ with $S_0\cup\{t_0\}\subset S$ and a $\de>0$ such that, for any $x_{t}\in(\dom f_t)\cap V$
$(t\in S)$ with $\diam\{x_{t}\}_{t\in S}<\de$, one can find an $x\in U$ such that conditions \eqref{P3.7P1} are satisfied.
Then $d(x,x_{t_0})<\eps'\le\rho$.
Observe that $(\dom\tilde f_t)\cap U=(\dom f_t)\cap U$ for all $t\in S$, and conditions \eqref{P3.7P2} are satisfied.
Thus, $\{\tilde f_t\}$ is firmly quasiuniformly
lower semicontinuous on~$U$.
\qed
\end{proof}

The next statement is an immediate consequence of Proposition~\ref{P3.21} and the trivial fact that adding the identical zero to a collection of functions does not affect its firm quasiuniform lower semicontinuity property (cf. Proposition~\ref{P2.6}).
\begin{corollary}
\label{C3.22}
Suppose that $\{f_t\}_{t\in T}$ is firmly quasiuniformly
lower semicontinuous, $t_0\notin T$, $f_{t_0}\colon X\to\R$ is uniformly continuous on $U$.
Then the collection $\{f_t\}_{t\in T\cup\{t_0\}}$ is firmly quasiuniformly
lower semicontinuous on $U$.
\end{corollary}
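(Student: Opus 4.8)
The plan is to deduce the corollary from Proposition~\ref{P3.21} by a short bookkeeping reduction. The obstacle to applying Proposition~\ref{P3.21} directly is that it perturbs a function $f_{t_0}$ whose index already belongs to the collection, whereas here $t_0\notin T$. I would therefore first enlarge the index set to $T\cup\{t_0\}$ in a way that leaves the firm quasiuniform lower semicontinuity untouched, and only then carry out the uniformly continuous perturbation.

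For the first step, I would adjoin to $\{f_t\}_{t\in T}$ the identically zero function at the new index $t_0$, writing $\hat f_{t_0}\equiv0$ (with $\dom\hat f_{t_0}=X$) and $\hat f_t:=f_t$ for $t\in T$. I then claim, by the same reasoning as for constant functions in Proposition~\ref{P2.6}, that ${\Theta}_U^\dag(\{\hat f_t\}_{t\in T\cup\{t_0\}})={\Theta}_U^\dag(\{f_t\}_{t\in T})=0$. Inspecting definition \eqref{Thdag}, the added index contributes $\hat f_{t_0}=0$ both to $\overline\sum_{t\in T}f_t(x)$ and to $\sum_{t\in S}f_t(x_t)$, so the second entry of the inner maximum is unchanged; and since the regime $\diam\{x_t\}_{t\in S}\to0$ forces the extra point $x_{t_0}$ to lie within vanishing distance of the remaining $x_t$, any $x$ nearly realising the infimum for the original indices simultaneously controls $d(x,x_{t_0})$, so the distance entry $\max_{t\in S}d(x,x_t)$ is governed by the other indices as well. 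Hence the augmented collection $\{\hat f_t\}_{t\in T\cup\{t_0\}}$ is firmly quasiuniformly lower semicontinuous on $U$.

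For the second step, I would apply Proposition~\ref{P3.21} to $\{\hat f_t\}_{t\in T\cup\{t_0\}}$, now with the distinguished index $t_0\in T\cup\{t_0\}$ and the uniformly continuous perturbation $g:=f_{t_0}$. The proposition yields that the perturbed family with $\hat f_{t_0}+g=0+f_{t_0}=f_{t_0}$ at index $t_0$ and $\hat f_t=f_t$ at the remaining indices is firmly quasiuniformly lower semicontinuous on $U$; but this family is precisely $\{f_t\}_{t\in T\cup\{t_0\}}$, which proves the corollary. The only point requiring any care is the inertness of the zero function in the first step, and specifically the distance term of \eqref{Thdag}; once that observation is in place, everything reduces to a direct invocation of Proposition~\ref{P3.21}.
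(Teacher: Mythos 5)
Your proof is correct and is essentially the paper's own argument: the paper likewise obtains Corollary~\ref{C3.22} as an immediate consequence of Proposition~\ref{P3.21} after adjoining the identically zero function at the new index $t_0$ (cf.\ Proposition~\ref{P2.6}). You merely make explicit the ``trivial fact'' that the zero function is inert in definition \eqref{Thdag} --- including the control of the extra distance term $d(x,x_{t_0})$ via $\diam\{x_t\}_{t\in S}\to0$ --- which the paper leaves to the reader.
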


\begin{remark}
\begin{enumerate}
\item
Facts similar to those in Propositions~\ref{P2.2} and \ref{P3.21} and Corollaries~\ref{C3.5} and \ref{C3.22} are also true for the weaker version of firm quasiuniform lower semicontinuity mentioned in Remark~\ref{R3.17}\,\ref{R3.17.4}.
\item
The assumption of firm quasiuniform lower semicontinuity plays an important role in the proofs of Propositions~\ref{P2.2} and \ref{P3.21}.
The analogues of
these propositions
and Corollaries~\ref{C3.5} and \ref{C3.22}
do not necessarily hold for the
non-firm
quasiuniform lower semicontinuity property.
\end{enumerate}
\end{remark}

The ``quasi'' analogues of Propositions~\ref{P4.6} and \ref{P4.7} exploit inf-compactness assumptions in order to guarantee
conditions \eqref{R3.17-5} and \eqref{R3.17-6}, which are essential for the quasiuniform and firm quasiuniform lower semicontinuity properties, respectively; see
Remark~\ref{R3.17}\,\ref{R3.17.3} and \ref{R3.17.4}.

\begin{proposition}
\label{P5.2}
Suppose that the functions $f_t$ $(t\in T)$ are
\lsc\ on $U$.
If there is a $t_0\in T$ such that the sets $\{x\in\cl V\mid f_{t_0}(x)\leq c\}$ are
compact for all $V\in EI(U)$ and $c\in\R$, and $\Lambda_{V}(\{f_t\}_{t\in S\setminus\{t_0\}})>-\infty$
for all $V\in EI(U)$ and $S\in\mathcal{F}(T)$ with $S_0\cup\{t_0\}\subset S$,
then condition \eqref{R3.17-5} is satisfied.
If, additionally,
\begin{align}
\label{P4.6-4}
\limsup_{\substack{\diam\{x_t\}_{t\in S}\to0\\ x_t\in\dom f_t\cap V\;(t\in S)}}\; \sum_{t\in S}f_{t}(x_t)<+\infty
\end{align}
for all $S\in\mathcal{F}(T)$ with $S_0\cup\{t_0\}\subset S$ and $V\in EI(U)$,
then
condition \eqref{R3.17-6} is satisfied.
\if{
\AH{Is {weakly} firmly quasiuniformly lower semicontinuous defined previously?}
\AK{12/06/25. Remark~\ref{R3.17}\,\ref{R3.17.4}.}
}\fi
\end{proposition}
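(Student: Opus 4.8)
The plan is to mirror the proof of Proposition~\ref{P4.6}, replacing the ambient space $X$ by the essentially interior subsets $V\in EI(U)$ and exploiting the inclusion $\cl V\subset\Int U$. In view of \eqref{Dedag} and \eqref{ThV}, it suffices to establish the two pointwise estimates
\begin{gather*}
\inf_U\sum_{t\in S}f_t\le{\Lambda}_V(\{f_t\}_{t\in S})
\quad\text{and}\quad
{\Theta}_{U,V}(\{f_t\}_{t\in S})=0
\end{gather*}
for every $V\in EI(U)$ and every $S\in\mathcal{F}(T)$ with $S_0\cup\{t_0\}\subset S$; conditions \eqref{R3.17-5} and \eqref{R3.17-6} then follow by taking the $\limsup$ over $S\uparrow T$ (restricting the directed set to members containing the fixed $S_0\cup\{t_0\}$) and afterwards the supremum over $V\in EI(U)$.

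The key geometric observation, which is what makes the quasi-version work, is that $V\in EI(U)$ forces $\cl V\subset\Int U$: if $B_\rho(V)\subset U$ and $v_n\in V$ with $v_n\to\hat x$, then $d(\hat x,X\setminus U)\ge\rho$, so $\hat x\in\Int U$. Consequently, any limit point $\hat x$ of a sequence drawn from $V$ lies in $\Int U\subset U$, hence is a legitimate competitor for $\inf_U$ and for the inner $\inf_{x\in U}$ in \eqref{ThV}, and the lower semicontinuity of the $f_t$ on $U$ may be invoked at $\hat x$.

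To realise the extraction for the first estimate, I would fix $V\in EI(U)$ and $S\supseteq S_0\cup\{t_0\}$ and choose $x_{tk}\in V$ $(t\in S,\ k\in\N)$ with $\diam\{x_{tk}\}_{t\in S}\to0$ and $\sum_{t\in S}f_t(x_{tk})\to{\Lambda}_V(\{f_t\}_{t\in S})$, the case ${\Lambda}_V(\{f_t\}_{t\in S})=+\infty$ being trivial. Splitting off the $t_0$-term and bounding the remaining sum from below by ${\Lambda}_V(\{f_t\}_{t\in S\setminus\{t_0\}})>-\infty$ yields $\limsup_k f_{t_0}(x_{t_0k})<+\infty$, so a tail of $\{x_{t_0k}\}$ lies in a level set $\{x\in\cl V\mid f_{t_0}(x)\le c\}$. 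By the compactness hypothesis we may assume $x_{t_0k}\to\hat x\in\cl V\subset\Int U$, whence $x_{tk}\to\hat x$ for all $t\in S$ because $\diam\{x_{tk}\}_{t\in S}\to0$. Lower semicontinuity then gives $\sum_{t\in S}f_t(\hat x)\le\liminf_k\sum_{t\in S}f_t(x_{tk})={\Lambda}_V(\{f_t\}_{t\in S})$, and since $\hat x\in U$ we obtain $\inf_U\sum_{t\in S}f_t\le{\Lambda}_V(\{f_t\}_{t\in S})$, establishing \eqref{R3.17-5}.

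For the second estimate, under the extra assumption \eqref{P4.6-4} the same extraction applies to an arbitrary sequence $x_{tk}\in(\dom f_t)\cap V$ with $\diam\{x_{tk}\}_{t\in S}\to0$, since \eqref{P4.6-4} forces $\limsup_k\sum_{t\in S}f_t(x_{tk})<+\infty$. Using $x=\hat x\in U$ as a competitor in the inner infimum of \eqref{ThV} and passing to the limit along the extracted subsequence, both $\max_{t\in S}d(\hat x,x_{tk})\to0$ and $\limsup_k\sum_{t\in S}(f_t(\hat x)-f_t(x_{tk}))\le0$ (by lower semicontinuity), so ${\Theta}_{U,V}(\{f_t\}_{t\in S})=0$. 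I expect the main obstacle to be the bookkeeping between the whole sequence defining the outer $\limsup$ in \eqref{ThV} and the subsequence produced by compactness; this is handled by choosing the sequence to attain the outer $\limsup$ and arguing by contradiction, a subsequence bounded away from $0$ yielding, after a further compactness extraction, a sub-subsequence along which the quantity tends to $0$. The only genuinely new ingredient compared with Proposition~\ref{P4.6} is that the level sets are assumed compact in $\cl V$ rather than in $X$, precisely so that limit points are simultaneously captured by compactness and retained inside $\Int U$.
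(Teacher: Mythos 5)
Your proof is correct and takes essentially the same route as the paper, whose own proof is just a sketch deferring to Proposition~\ref{P4.6}: choose the sequences $\{x_{tk}\}$ from an arbitrary $V\in EI(U)$, capture a tail of $\{x_{t_0k}\}$ in the compact set $\{x\in\cl V\mid f_{t_0}(x)\leq c\}$, and use the weaker condition \eqref{P4.6-4} in place of \eqref{P4.6-3} for the second assertion. Your explicit observation that $B_\rho(V)\subset U$ forces $\cl V\subset\Int U$, so that the limit point $\hat x$ is a legitimate competitor for $\inf_U$ and for the inner $\inf_{x\in U}$ in \eqref{ThV}, merely spells out the detail the paper leaves implicit.
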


\begin{proof}[Sketch of proof]
The proof goes along the same lines as that of Proposition~\ref{P4.6} with minor amendments identified below.
The sequences $\{x_{tk}\}$ need to be chosen from an arbitrary set $V\in EI(X)$.
As a result, a tail and the limiting points of the sequence $\{x_{t_0k}\}$ belong to the compact set $\{x\in \cl V\mid f_{t_0}(x)\leq c\}$.
This ensures
{condition \eqref{R3.17-5}.}
Using the weaker condition \eqref{P4.6-4} instead of \eqref{P4.6-3} ensures
{condition \eqref{R3.17-6}.}
\qed
\end{proof}

\begin{proposition}
\label{P5.6}
Let $X$ be a normed space,
and the functions $f_t$ $(t\in T)$ be weakly sequentially \lsc\ on $U$.
If $\cl^wV\subset U$ for each $V\in EI(U)$, and there is a exist a $t_0\in T$ and an $S_0\in\mathcal{F}(T)$ such that the sets $\{x\in\cl^wV\mid f_{t_0}(x)\leq c\}$ are weakly sequentially compact for all $V\in EI(U)$ and $c\in\R$, and $\Lambda_{V}(\{f_t\}_{t\in S\setminus\{t_0\}})>-\infty$
for all $V\in EI(U)$ and $S\in\mathcal{F}(T)$ with $S_0\cup\{t_0\}\subset S$,
{then
\begin{gather}
\label{P5.6-1}
\sup_{V\in EI(U)} \limsup_{S\uparrow T,\;|S|<\infty}\Big(\inf\sum_{t\in S} f_t- {\Lambda}_{V}\big(\{f_t\}_{t\in S}\big)\Big)\le0.
\end{gather}
}
\end{proposition}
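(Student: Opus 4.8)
The plan is to reduce \eqref{P5.6-1} to a single pointwise inequality. Fix an arbitrary $V\in EI(U)$ and a finite set $S\supseteq S_0\cup\{t_0\}$, and aim to prove
\[
\inf\nolimits_U\sum_{t\in S}f_t\le{\Lambda}_V(\{f_t\}_{t\in S}).
\]
Once this holds for every such $S$, taking the $\limsup$ over $S\uparrow T$ and then the supremum over $V\in EI(U)$ gives \eqref{P5.6-1} at once (note that \eqref{P5.6-1} is precisely ${\Delta}_U^\dag(\{f_t\}_{t\in T})\le0$, i.e.\ condition \eqref{R3.17-5}). The argument is the weak-topology ``quasi'' counterpart of Case~1 in the proof of Proposition~\ref{P4.6}, carried out as in Proposition~\ref{P4.7}: norm compactness is replaced by weak sequential compactness, lower semicontinuity by weak sequential lower semicontinuity, and care is taken to keep the candidate limit point inside $U$. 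I would only treat the nontrivial case ${\Lambda}_V(\{f_t\}_{t\in S})<+\infty$, since otherwise the displayed inequality is immediate.

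Following definition \eqref{La0} of ${\Lambda}_V$, I would choose points $x_{tk}\in V$ $(t\in S,\ k\in\N)$ with $\diam\{x_{tk}\}_{t\in S}\to0$ and $\sum_{t\in S}f_t(x_{tk})\to{\Lambda}_V(\{f_t\}_{t\in S})=:\alpha$. The crucial step is to confine the $t_0$-component to a level set. Since $\diam\{x_{tk}\}_{t\in S\setminus\{t_0\}}\to0$ and $x_{tk}\in V$, the hypothesis ${\Lambda}_V(\{f_t\}_{t\in S\setminus\{t_0\}})>-\infty$ yields $\liminf_k\sum_{t\in S\setminus\{t_0\}}f_t(x_{tk})\ge{\Lambda}_V(\{f_t\}_{t\in S\setminus\{t_0\}})>-\infty$, whence
\[
\limsup_{k}f_{t_0}(x_{t_0k})\le\alpha-{\Lambda}_V(\{f_t\}_{t\in S\setminus\{t_0\}})<+\infty.
\]
Hence, for some $c\in\R$ and all large $k$, we have $x_{t_0k}\in\{x\in\cl^wV\mid f_{t_0}(x)\le c\}$, which is weakly sequentially compact by assumption; extracting a subsequence, $x_{t_0k}\weakly\hat x$ with $\hat x\in\cl^wV\subset U$.

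Along this subsequence, $\|x_{tk}-x_{t_0k}\|\le\diam\{x_{tk}\}_{t\in S}\to0$ forces $x_{tk}\weakly\hat x$ for every $t\in S$. Weak sequential lower semicontinuity of each $f_t$ on $U$, applied at $\hat x\in U$, gives $f_t(\hat x)\le\liminf_kf_t(x_{tk})$; because the $f_t$ are $\R_\infty$-valued, each such $\liminf$ is $>-\infty$, so summing over the finite set $S$ and using superadditivity of $\liminf$ we obtain $\sum_{t\in S}f_t(\hat x)\le\liminf_k\sum_{t\in S}f_t(x_{tk})=\alpha={\Lambda}_V(\{f_t\}_{t\in S})$. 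Since $\hat x\in U$, it follows that $\inf_U\sum_{t\in S}f_t\le\sum_{t\in S}f_t(\hat x)\le{\Lambda}_V(\{f_t\}_{t\in S})$, the desired inequality. I expect the main obstacle to be exactly the step of keeping the weak limit inside $U$: this is where the hypothesis $\cl^wV\subset U$ is indispensable, since weak sequential lower semicontinuity of the $f_t$ is assumed only on $U$ and must be invoked at $\hat x$; the supporting boundedness-above of $f_{t_0}(x_{t_0k})$, which is the prerequisite for applying weak sequential compactness of the level set, rests essentially on the assumption ${\Lambda}_V(\{f_t\}_{t\in S\setminus\{t_0\}})>-\infty$.
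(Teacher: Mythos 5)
Your proof is correct and follows essentially the same route as the paper, which proves Proposition~\ref{P5.6} by reusing Case~1 of the proof of Proposition~\ref{P4.6} with the amendments indicated for Propositions~\ref{P4.7} and \ref{P5.2}: sequences taken from $V\in EI(U)$, the bound $\limsup_k f_{t_0}(x_{t_0k})<+\infty$ obtained from $\Lambda_V(\{f_t\}_{t\in S\setminus\{t_0\}})>-\infty$, weak sequential compactness of the level set in $\cl^w V$, and weak sequential lower semicontinuity applied at the weak limit $\hat x\in\cl^w V\subset U$. You correctly identify the role of the hypothesis $\cl^w V\subset U$, and your restriction to the case $\Lambda_V(\{f_t\}_{t\in S})<+\infty$ is harmless since the same argument rules out the value $-\infty$.
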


\begin{corollary}
\label{C4.3}
Let $X$ be a reflexive Banach space, $U$ be convex and bounded, the functions $f_t$ $(t\in T)$ be weakly sequentially \lsc\ on~$U$,
and $\Lambda_{V}(\{f_t\}_{t\in S\setminus\{t_0\}})>-\infty$
for all $V\in EI(U)$ and $S\in\mathcal{F}(T)$ with $S_0\cup\{t_0\}\subset S$.
Then
condition \eqref{P5.6-1} is satisfied.
\end{corollary}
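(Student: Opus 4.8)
The plan is to obtain the corollary as a special case of Proposition~\ref{P5.6}. Since $X$ is reflexive (hence a normed space), the functions $f_t$ are assumed weakly sequentially \lsc\ on $U$, and the condition $\Lambda_{V}(\{f_t\}_{t\in S\setminus\{t_0\}})>-\infty$ (for all $V\in EI(U)$ and $S\in\mathcal{F}(T)$ with $S_0\cup\{t_0\}\subset S$) is part of the hypotheses, it remains only to verify the two geometric/compactness assumptions of Proposition~\ref{P5.6}: first, that $\cl^wV\subset U$ for every $V\in EI(U)$; and second, that the level sets $\{x\in\cl^wV\mid f_{t_0}(x)\le c\}$ are weakly sequentially compact for all $V\in EI(U)$ and $c\in\R$. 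Once these are in place, condition \eqref{P5.6-1} follows immediately from the proposition.

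To verify $\cl^wV\subset U$, I would exploit the convexity of $U$ together with the margin built into essential interiority. Fix $V\in EI(U)$, so that $B_\rho(V)=V+\rho\B\subset U$ for some $\rho>0$. Since $U$ is convex, $\conv V+\rho\B=\conv(V+\rho\B)\subset U$. By Mazur's theorem, the weak closure of the convex set $\conv V$ coincides with its norm closure, so any $\bar x\in\cl^wV\subset\cl^w(\conv V)=\cl(\conv V)$ is a norm limit of points $y_n\in\conv V$. Choosing $n$ large enough that $\|\bar x-y_n\|<\rho$ gives $\bar x\in y_n+\rho\B\subset\conv V+\rho\B\subset U$. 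I expect this step to be the main obstacle, precisely because $U$ is not assumed closed: the extra room $\rho$ coming from $V\in EI(U)$ is exactly what is needed to land inside $U$ rather than merely in $\cl U$, and the convexity of $U$ is what lets the margin survive the passage to the convex hull and its closure.

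To verify the weak sequential compactness of the level sets, I would first note that $V\subset U$ is bounded (as $U$ is bounded) and that, by reflexivity of $X$, bounded sets are relatively weakly compact; by the Eberlein–Šmulian theorem this is equivalent to relative weak sequential compactness, so $\cl^wV$ is weakly sequentially compact. Given a sequence $\{x_n\}$ in the level set $\{x\in\cl^wV\mid f_{t_0}(x)\le c\}$, I would extract a subsequence $x_{n_k}\weakly\bar x\in\cl^wV$. Since $\cl^wV\subset U$ by the previous step and $f_{t_0}$ is weakly sequentially \lsc\ on $U$, we obtain $f_{t_0}(\bar x)\le\liminf_k f_{t_0}(x_{n_k})\le c$, so $\bar x$ lies in the level set. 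Hence each level set is weakly sequentially closed inside the weakly sequentially compact set $\cl^wV$, and therefore weakly sequentially compact itself. With both assumptions verified, Proposition~\ref{P5.6} applies and yields \eqref{P5.6-1}.
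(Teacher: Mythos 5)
Your proposal is correct and follows exactly the route the paper intends: the corollary is a direct specialization of Proposition~\ref{P5.6}, obtained by checking that reflexivity plus boundedness of $U$ yields the weak sequential compactness of the level sets, and that convexity of $U$ yields $\cl^w V\subset U$ for every $V\in EI(U)$. Your Mazur-theorem argument for the latter (passing to $\conv V$, using $\conv V+\rho\B\subset U$, and absorbing the norm-approximation error into the margin $\rho$) is sound and correctly supplies the one verification the paper leaves implicit --- note that the paper's accompanying remark only covers the case of $V$ convex, whereas your argument shows convexity of $U$ alone suffices for arbitrary $V\in EI(U)$.
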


\begin{remark}
\begin{enumerate}
\item
As illustrated by \cite[Example~3.11]{FabKruMeh24}, condition \eqref{P4.6-4} in Proposition~\ref{P5.2} is essential.
\item
Condition $\cl^wV\subset U$
{in Proposition~\ref{P5.6}}
does not hold in general for $V\in EI(U)$, but it does in many interesting situations, for instance when $V$ is convex or
{a finite}
union of convex sets, or when $X$ is finite dimensional.
\item
In Proposition~\ref{P5.6}, if $X$ is Banach, then, by Eberlein–Šmulian theorem, it suffices to assume the level sets to be weakly  compact.
\end{enumerate}
\end{remark}

The key uniform (firm uniform) lower semicontinuity assumptions in Theorems~\ref{T6.1} and \ref{T7.1} can be weakened: it suffices to replace them by the corresponding quasiuniform (firm quasiuniform) lower semicontinuity assumptions.
This leads to slightly more advanced statements.

\begin{theorem}
\label{T6.01}
Theorem~\ref{T6.1} remains valid if the assumption
``$\{f_{t}\}_{t\in T}$ is uniformly lower semicontinuous on $B_\de(\bx)$''
is replaced by
``$\{f_{t}\}_{t\in T}$ is quasiuniformly lower semicontinuous on $B_\de(\bx)$''.
\end{theorem}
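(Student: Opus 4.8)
The plan is to rerun the proof of Theorem~\ref{T6.1} essentially verbatim, altering only the single place where the qualification condition enters. Inspecting that proof, one sees that uniform lower semicontinuity is used exactly once, to produce inequality \eqref{T6.1P00}, and that \eqref{T6.1P00} in turn serves only to extract the finite set $S=\{t_1,\dots,t_m\}$ and the radius $\eta'$ giving \eqref{T6.1P003b}. Everything afterwards --- the perturbed functions $\varphi_\ga$ and $\widehat\varphi_\ga$, the Ekeland variational principle applied on the complete metric space $(\overline B_\rho(\bx))^m$, the primal estimates \eqref{T6.1P11} and \eqref{T6.1P12}, and the Clarke and \Fr\ subdifferential sum rules --- operates entirely with points lying in $\overline B_\rho(\bx)$ and never refers back to the qualification condition. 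Hence it suffices to reproduce \eqref{T6.1P003b} from quasiuniform lower semicontinuity.

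First I would convert the new hypothesis into the required inequality. Since $\bx$ minimizes $\overline\sum_{t\in T}f_t$ over $B_\de(\bx)$, we have $\inf_{B_\de(\bx)}\overline\sum_{t\in T}f_t=\overline\sum_{t\in T}f_t(\bx)$, so Definition~\ref{D3.14}\,\ref{D3.14.1} applied on $B_\de(\bx)$, together with \eqref{Ladag}, gives
\begin{gather*}
\overline{\sum_{t\in T}}f_t(\bx) \le{\Lambda}_{B_\de(\bx)}^\dag(\{f_t\}_{t\in T}) =\inf_{V\in EI(B_\de(\bx))}\liminf_{S\uparrow T,\;|S|<\infty}\; \liminf_{\substack{\diam\{x_t\}_{t\in S}\to0\\ x_t\in V\,(t\in S)}}\; \sum_{t\in S}f_t(x_t).
\end{gather*}
As the left-hand side is bounded above by an infimum over $V$, it is bounded above by each term of that infimum; that is, for \emph{every} $V\in EI(B_\de(\bx))$,
\begin{gather*}
\overline{\sum_{t\in T}}f_t(\bx) \le\liminf_{S\uparrow T,\;|S|<\infty}\; \liminf_{\substack{\diam\{x_t\}_{t\in S}\to0\\ x_t\in V\,(t\in S)}}\; \sum_{t\in S}f_t(x_t).
\end{gather*}
This coincides with \eqref{T6.1P00}, except that the decoupling variables are confined to $V$ instead of all of $B_\de(\bx)$.

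It then remains to pick $V$ so that this confinement matches the set $\overline B_\rho(\bx)$ occurring in \eqref{T6.1P003b}. Keeping the choices of $\de'$, $\eps'$, $\rho$ and $\eta$ from the proof of Theorem~\ref{T6.1}, we have $0<\rho<\de'=\min\{\eps,\de\}\le\de$, whence $B_{\de-\rho}(\overline B_\rho(\bx))=B_\de(\bx)$ with $\de-\rho>0$, so that $\overline B_\rho(\bx)\in EI(B_\de(\bx))$. Taking $V:=\overline B_\rho(\bx)$ in the last display yields exactly what \eqref{T6.1P00} supplied in the original argument, already restricted to $\overline B_\rho(\bx)$; extracting $S\supset S_0$ and $\eta'$ from \eqref{f} and this inequality then proceeds word for word as before, producing \eqref{T6.1P003a} and \eqref{T6.1P003b}, after which the proof of Theorem~\ref{T6.1} carries over unchanged and establishes both assertions. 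The only point demanding care --- and the natural place for a gap to hide --- is the direction of the domain restriction: quasiuniform lower semicontinuity furnishes the bound with the variables confined to the \emph{smaller} set $\overline B_\rho(\bx)\subset B_\de(\bx)$, whereas the uniform version furnished it over the larger $B_\de(\bx)$. Since shrinking the admissible set can only raise the infima defining the limits inferior, no strength is lost; indeed the smaller set is precisely the one \eqref{T6.1P003b} needs, which is what makes the substitution seamless.
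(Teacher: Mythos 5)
Your proof is correct and matches the paper's own argument: the paper likewise observes that one only needs to replace condition \eqref{T6.1P00} by its quasi analogue with the decoupling variables confined to $\overline B_\rho(\bx)$ for $\rho\in(0,\de)$, which is exactly the instantiation $V:=\overline B_\rho(\bx)\in EI(B_\de(\bx))$ of Definition~\ref{D3.14}\,\ref{D3.14.1} that you carry out, after which the proof of Theorem~\ref{T6.1} runs unchanged since it already works inside $\overline B_\rho(\bx)$ from \eqref{T6.1P003b} onward. Your explicit verification that $\overline B_\rho(\bx)$ is essentially interior to $B_\de(\bx)$ and that shrinking the admissible set only raises the relevant infima supplies the details the paper leaves implicit.
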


To prove Theorem~\ref{T6.01}, in view of Definition~\ref{D3.14}\,\ref{D3.14.1} and definition \eqref{Ladag}, one only needs to replace condition \eqref{T6.1P00} in the proof of Theorem~\ref{T6.1} by the following one:
\begin{gather*}
\overline{\sum_{t\in T}}f_{t}(\bx)\le\inf_{\rho\in(0,\de)}\; \liminf_{S\uparrow T,\;|S|<\infty}\; \liminf_{\substack{\diam\{x_t\}_{t\in S}\to0\\ x_t\in\overline B_\rho(\bar x)\;(t\in S)}}\;\sum_{t\in S}f_{t}(x_{t}).
\end{gather*}

\begin{theorem}
\label{T5.2}
Theorem~\ref{T7.1} remains valid if the assumption
``$\{f_{t}\}_{t\in T}$ is firmly uniformly lower semicontinuous on $B_\de(\bx)$''
is replaced by
``$\{f_{t}\}_{t\in T}$ is firmly quasiuniformly lower semicontinuous on $B_\de(\bx)$''.
\end{theorem}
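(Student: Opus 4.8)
The plan is to rerun the proof of Theorem~\ref{T7.1} essentially verbatim, replacing each appeal to a ``uniform'' result by its ``quasi'' counterpart established earlier in this section. I would fix $\eps>0$, choose $t_0\notin T$, set $\eps':=\eps/(\|x^*\|+2)$, and introduce the auxiliary function $f_{t_0}(x):=-\ang{x^*,x}+\eps'\|x-\bx\|$ for all $x\in X$. Exactly as in Theorem~\ref{T7.1}, the definition of the \Fr\ subdifferential shows that $\bx$ is a local minimum of $\overline\sum_{t\in T\cup\{t_0\}}f_t$; after shrinking $\de$ if necessary, $\bx$ is a minimum of this sum over $B_\de(\bx)$. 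Being the sum of a bounded linear functional and a norm, $f_{t_0}$ is Lipschitz on $X$, hence uniformly continuous on $B_\de(\bx)$.

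The only place where the lower semicontinuity hypothesis enters is in upgrading $\{f_t\}_{t\in T}$ to the enlarged collection $\{f_t\}_{t\in T\cup\{t_0\}}$ and feeding the latter into the multiplier rule, and here I would make three substitutions. First, shrinking $\de$ preserves firm quasiuniform lower semicontinuity by Corollary~\ref{C3.5}; this descent property has no non-quasi analogue and is precisely what makes the $\de$-shrinking step clean. Second, adding the uniformly continuous $f_{t_0}$ preserves firm quasiuniform lower semicontinuity by Corollary~\ref{C3.22} (in place of Corollary~\ref{C2.1}), so $\{f_t\}_{t\in T\cup\{t_0\}}$ is firmly quasiuniformly lower semicontinuous on $B_\de(\bx)$. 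Third, Proposition~\ref{P3.15}\,\ref{P3.15.2} (in place of Proposition~\ref{P3.2}) yields that $\{f_t\}_{t\in T\cup\{t_0\}}$ is quasiuniformly lower semicontinuous on $B_\de(\bx)$, which is exactly the hypothesis required by Theorem~\ref{T6.01}.

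With this in hand, I would apply Theorem~\ref{T6.01} (its \Fr\ assertion~\ref{T6.1.2} when $X$ is Asplund, its Clarke assertion~\ref{T6.1.1} when the $f_t$ are convex, using that the two subdifferentials coincide in the convex case), forcing $t_0$ into the selected finite subset by feeding in $S_0\cup\{t_0\}$ and running the rule with parameter $\eps'$. This produces, exactly as in Theorem~\ref{T7.1}, a finite $S=\{t_1,\ldots,t_m\}\supset S_0$, points $x_0,\ldots,x_m\in B_{\eps'}(\bx)\subset B_\eps(\bx)$, the value estimate $\sum_{i=0}^m(f_{t_i}(x_i)-f_{t_i}(\bx))<\eps'$, and the inclusion $0\in\sum_{i=0}^m\sdf f_{t_i}(x_i)+\eps'\B^*$. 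The concluding arithmetic is unchanged: since $\sd f_{t_0}(x_0)\subset-x^*+\eps'\overline\B^*$, isolating the $t_0$-term in the inclusion gives $d(x^*,\sum_{i=1}^m\sdf f_{t_i}(x_i))<2\eps'\le\eps$, i.e.\ \eqref{SR-2}, while isolating it in the value estimate and using $\|x_0-\bx\|<\eps'$ yields \eqref{T6.2-01}.

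I do not anticipate a genuine obstacle, since the Ekeland-based decoupling and all sum-rule bookkeeping are already carried out once and for all inside Theorem~\ref{T6.1}/Theorem~\ref{T6.01}. The single point requiring care---and the only respect in which the argument is more delicate than its uniform prototype---is the $\de$-shrinking: one must ensure the firm quasiuniform lower semicontinuity descends to the smaller ball, and I would justify this explicitly by Corollary~\ref{C3.5}, which is the distinctive feature of the quasi framework that has no counterpart in Section~\ref{S3}.
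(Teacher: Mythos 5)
Your proposal is correct and follows essentially the same route as the paper, whose proof of Theorem~\ref{T5.2} is precisely to repeat the proof of Theorem~\ref{T7.1} with Theorem~\ref{T6.01} substituted for Theorem~\ref{T6.1} (implicitly using Corollary~\ref{C3.22} and Proposition~\ref{P3.15} in place of Corollary~\ref{C2.1} and Proposition~\ref{P3.2}, exactly as you do). Your explicit appeal to Corollary~\ref{C3.5} to justify that firm quasiuniform lower semicontinuity survives the shrinking of $\de$ is a welcome piece of care that the paper leaves implicit, and it correctly exploits the one feature of the ``quasi'' framework that has no analogue in Section~\ref{S3}.
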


The proof of Theorem~\ref{T5.2} repeats that of Theorem~\ref{T7.1} with the replacement of application of Theorem~\ref{T6.1} by that of Theorem~\ref{T6.01}.
\if{
Replacing application of Theorem~\ref{T6.1} by that of Theorem~\ref{T6.01} in the proof of Theorem~\ref{T7.1} leads to the following slightly more advanced statement.
{Its second part can be deduced from \cite[Theorem~7.1]{FabKruMeh24}.}

\begin{theorem}
Let $X$ be a Banach space, $\bx\in\dom\overline\sum_{t\in T}f_{t}$, and $x^*\in{\sdf}\big(\overline\sum_{t\in T}f_{t}\big)(\bar x)$.
\begin{enumerate}
\item
Suppose that
either $X$ is Asplund or $f_t$ $(t\in T)$ are convex, and
there is a number $\de>0$ such that
$f_t$ $(t\in T)$ are lower semicontinuous and bounded from below on $B_\de(\bx)$, and $\{f_{t}\}_{t\in T}$ is firmly quasiuniformly lower semicontinuous on $B_\de(\bx)$.
Then,
for any
{$\eps>0$ and}
$S_0\in\mathcal{F}(T)$,
there exist an $S\in\mathcal{F}(T)$ and points
$x_t\in B_\eps(\bx)$ $(t\in S)$ such that $S_0\subset S$, and
conditions \eqref{T6.2-01} and \eqref{SR-2} are satisfied.
\item
Suppose that $X$ is reflexive, $T$ is finite, and $f_t$ $(t\in T)$ are weakly sequentially lower semicontinuous.
Then,
{for any $\eps>0$,}
there exist points
$x_t\in B_\eps(\bx)$ $(t\in T)$ such that
$|f_{t}(x_t)-f_{t}(\bx)|<\eps$ $(t\in T)$, and condition \eqref{SR-2} is
satisfied with $S:=T$.
\end{enumerate}
\end{theorem}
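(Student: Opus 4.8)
The plan is to transcribe the proof of Theorem~\ref{T7.1} line by line, replacing each auxiliary result about \emph{uniform} lower semicontinuity by its \emph{quasiuniform} counterpart. First I would fix $\eps>0$, choose an index $t_0\notin T$, set $\eps':=\eps/(\|x^*\|+2)$, and introduce the perturbing function $f_{t_0}(x):=-\ang{x^*,x}+\eps'\|x-\bx\|$, exactly as in Theorem~\ref{T7.1}. The computation showing that $\bx$ is a local minimum of $\overline\sum_{t\in T\cup\{t_0\}}f_{t}$ uses only the definition of the \Fr\ subdifferential together with the hypothesis $x^*\in\sdf\big(\overline\sum_{t\in T}f_t\big)(\bx)$; it does not refer to the lower semicontinuity flavour at all, so it carries over verbatim, and after shrinking $\de$ we may assume $\bx$ minimises $\overline\sum_{t\in T\cup\{t_0\}}f_{t}$ over $B_\de(\bx)$.

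The next step is to establish quasiuniform lower semicontinuity of the enlarged family. Since $f_{t_0}$ is Lipschitz continuous, hence uniformly continuous on $B_\de(\bx)$, Corollary~\ref{C3.22} applied with $U=B_\de(\bx)$ yields that $\{f_t\}_{t\in T\cup\{t_0\}}$ is firmly quasiuniformly lower semicontinuous on $B_\de(\bx)$, and Proposition~\ref{P3.15}\,\ref{P3.15.2} then upgrades this to quasiuniform lower semicontinuity on $B_\de(\bx)$. I would also note in passing that $f_{t_0}$ is continuous and bounded from below on the bounded set $B_\de(\bx)$, so the remaining hypotheses of Theorem~\ref{T6.1} (lower semicontinuity and boundedness from below of each member) survive for the augmented collection.

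With these facts in hand I would invoke Theorem~\ref{T6.01}, the quasiuniform version of Theorem~\ref{T6.1}, in place of Theorem~\ref{T6.1}: its second part when $X$ is Asplund, and its first part combined with the coincidence $\sdf=\sdc$ in the convex case. This produces a finite $S=\{t_1,\dots,t_m\}$ with $S_0\subset S$ and points $x_0,\dots,x_m\in B_{\eps'}(\bx)\subset B_\eps(\bx)$ satisfying the analogue of \eqref{T7.1P4}, namely $0\in\sum_{i=0}^m\sdf f_{t_i}(x_i)+\eps'\B^*$, together with the summability estimate of the form \eqref{T6.2-01}. The final paragraph of the proof of Theorem~\ref{T7.1}, which uses $\sd f_{t_0}(x_0)\subset-x^*+\eps'\overline\B^*$ to pass from this inclusion to $x^*\in\sum_{i=1}^m\sdf f_{t_i}(x_i)+\eps\B^*$, applies word for word, giving \eqref{SR-2} and \eqref{T6.2-01}.

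I do not anticipate any genuine obstacle, since the argument is a clean substitution. The only points demanding care are (i) confirming that Corollary~\ref{C3.22} is stated for uniform continuity on the set $U$, which matches the Lipschitz continuity of $f_{t_0}$ on $B_\de(\bx)$, and (ii) verifying that the ``quasi'' chain Corollary~\ref{C3.22} $\to$ Proposition~\ref{P3.15}\,\ref{P3.15.2} $\to$ Theorem~\ref{T6.01} mirrors the original chain Corollary~\ref{C2.1} $\to$ Proposition~\ref{P3.2} $\to$ Theorem~\ref{T6.1} so that no hidden appeal to the stronger uniform property remains. As Theorem~\ref{T6.01} has already been shown to hold under the weaker quasiuniform hypothesis, the replacement is legitimate and the conclusion of Theorem~\ref{T7.1} follows unchanged.
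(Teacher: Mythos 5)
Your treatment of the first assertion is correct and coincides with the paper's own route: the paper proves it precisely by repeating the proof of Theorem~\ref{T7.1} with Theorem~\ref{T6.01} substituted for Theorem~\ref{T6.1}, and your chain Corollary~\ref{C3.22} $\to$ Proposition~\ref{P3.15}\,\ref{P3.15.2} $\to$ Theorem~\ref{T6.01} is exactly the ``quasi'' analogue of the chain Corollary~\ref{C2.1} $\to$ Proposition~\ref{P3.2} $\to$ Theorem~\ref{T6.1} used there, so no hidden appeal to the stronger uniform property remains.

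However, there is a genuine gap: the statement has \emph{two} enumerated assertions, and your proposal proves only the first. The second assertion cannot be obtained by the same verbatim substitution, because it assumes no quasiuniform lower semicontinuity at all --- only that $X$ is reflexive, $T$ is finite, and the $f_t$ are weakly sequentially lower semicontinuous --- so the key hypothesis of Theorem~\ref{T6.01} must first be \emph{derived}. The paper does this by observing that the perturbation $f_{t_0}(x):=-\ang{x^*,x}+\eps'\|x-\bx\|$ is weakly sequentially lower semicontinuous, so that Corollary~\ref{C4.3} (reflexivity making the bounded convex set $B_\de(\bx)$ weakly sequentially compact, which supplies the inf-compactness) yields condition \eqref{P5.6-1} for the augmented family $\{f_t\}_{t\in T\cup\{t_0\}}$; since $T\cup\{t_0\}$ is finite, Remark~\ref{R3.17}\,\ref{R3.17.3} then shows this weak condition ${\Delta}^\dag\le0$ actually \emph{coincides} with quasiuniform lower semicontinuity on $B_\de(\bx)$, after which Theorem~\ref{T6.01} applies. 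Moreover, the conclusion of the second assertion is stronger than what your transcription delivers: one gets $S:=T$ exactly (no enlargement of the index set, which is available only because $T$ is finite) and the individual estimates $|f_{t}(x_t)-f_{t}(\bx)|<\eps$ in place of the aggregate condition \eqref{T6.2-01}; the upgrade to individual estimates requires Remark~\ref{R3.1}\,\ref{R3.1.4} (joint lower semicontinuity, which for finite $T$ reduces to lower semicontinuity of each $f_t$). None of these ingredients --- Corollary~\ref{C4.3}, the finite-$T$ equivalence from Remark~\ref{R3.17}\,\ref{R3.17.3}, or the passage via Remark~\ref{R3.1}\,\ref{R3.1.4} --- appears in your proposal, so as written it establishes only half of the theorem.
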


\begin{proof}
The proof of the first assertion repeats that of Theorem~\ref{T7.1} with the replacement of application of Theorem~\ref{T6.1} by that of Theorem~\ref{T6.01}.
To prove the second assertion, it suffices to notice that, in view of Remark~\ref{R3.17}\,\ref{R3.17.3},
it follows from Corollary~\ref{C4.3}
that,
{for some $\de>0$},
$\{f_t\}_{t\in T\cup\{t_0\}}$ is
quasiuniformly lower semicontinuous on
$B_\de(\bx)$.
Without loss of generality, $f_t$ $(t\in T)$ are lower semicontinuous and bounded from below on $B_\de(\bx)$; and $\bx$ is a
minimum of $\overline\sum_{t\in T\cup\{t_0\}}f_{t}$ over $B_\de(\bx)$.
In view of Remark~\ref{R3.1}\,\ref{R3.1.4},
the conclusion follows from Theorem~\ref{T6.01}
with
{$S:=T$}
using the same argument as in the proof of Theorem~\ref{T7.1}.
\end{proof}
\fi

The next modification of Theorem~\ref{T5.2} with finite $T$ can be of interest.
\begin{proposition}
\label{T5.7}
Let $X$ be a reflexive Banach space, $f_1,\ldots,f_m$ be weakly sequentially lower semicontinuous,
$\bx\in\bigcap_{i=1}^m\dom f_{i}$, and $x^*\in{\sdf}\big(\sum_{i=1}^mf_{i}\big)(\bar x)$.
Then,
{for any $\eps>0$,}
there exist points
$x_1,\ldots,x_m\in B_\eps(\bx)$  such that
$|f_{i}(x_i)-f_{t}(\bx)|<\eps$ $(i=1,\ldots,m)$, and $x^*\in\sum_{i=1}^m{\sdf}f_{i}(x_i)+\varepsilon\B^*$.
\end{proposition}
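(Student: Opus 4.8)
The plan is to follow the proof of Theorem~\ref{T7.1} almost verbatim, the only genuinely new ingredient being that the required quasiuniform lower semicontinuity now comes for free: it is produced by reflexivity together with weak sequential lower semicontinuity via Corollary~\ref{C4.3}, instead of being postulated. Since the index set $T=\{1,\ldots,m\}$ is finite, one can then apply the quasi optimality conditions of Theorem~\ref{T6.01} with the whole of $T$ (enlarged by one auxiliary index) playing the role of $S$.

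First I would reduce to a bounded neighbourhood. As $X$ is reflexive, $\overline B_\de(\bx)$ is weakly (sequentially) compact, so each weakly sequentially lower semicontinuous $f_i$ attains its infimum there and is in particular bounded from below; moreover weak sequential lower semicontinuity implies ordinary lower semicontinuity. Thus, for every $\de>0$, the $f_i$ are lower semicontinuous and bounded from below on $B_\de(\bx)$. Next, exactly as in the proof of Theorem~\ref{T7.1}, I would fix $t_0\notin T$, set $\eps':=\eps/(\|x^*\|+2)$ and $f_{t_0}(x):=-\ang{x^*,x}+\eps'\|x-\bx\|$, and use the definition of the \Fr\ subdifferential to verify that $\bx$ is a local minimum of $\sum_{t\in T\cup\{t_0\}}f_{t}$; after shrinking $\de$ I may assume $\bx$ minimizes this sum over $U:=B_\de(\bx)$. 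Being convex and continuous, $f_{t_0}$ is uniformly continuous and weakly sequentially lower semicontinuous on the bounded convex set $U$.

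The key step is to show that $\{f_t\}_{t\in T\cup\{t_0\}}$ is quasiuniformly lower semicontinuous on $U$. Here I would invoke Corollary~\ref{C4.3}: $U$ is convex and bounded, every function of the (now enlarged) collection is weakly sequentially lower semicontinuous, and $\Lambda_V(\{f_t\}_{t\in S'})\ge\sum_{t\in S'}\inf_Vf_t>-\infty$ for each finite $S'$ and each $V\in EI(U)$, precisely because each $f_t$ is bounded from below on $U$. Corollary~\ref{C4.3} then yields condition \eqref{P5.6-1}, that is ${\Delta}_U^\dag(\{f_t\}_{t\in T\cup\{t_0\}})\le0$, the weak quasiuniform property \eqref{R3.17-5}. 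Since $T\cup\{t_0\}$ is finite, Remark~\ref{R3.17}\,\ref{R3.17.3} says inequality \eqref{R3.17-4} is an equality, so this weak property coincides with genuine quasiuniform lower semicontinuity (the $\inf$-stability needed in Proposition~\ref{P3.18}\,\ref{P3.18.1} being automatic for finite index sets).

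Finally I would apply the Asplund part of Theorem~\ref{T6.01} (reflexive spaces are Asplund) with $S:=T\cup\{t_0\}$. As every $f_t$ is lower semicontinuous, the collection is jointly lower semicontinuous in the sense of the finite case of Remark~\ref{R3.1}\,\ref{R3.1.4}, so the conclusion may be stated in the traditional form: there are points $x_0,\ldots,x_m\in B_{\eps'}(\bx)$ with $|f_{t_i}(x_i)-f_{t_i}(\bx)|<\eps'$ $(i=0,\ldots,m)$ and $0\in\sum_{i=0}^m\sdf f_{t_i}(x_i)+\eps'\B^*$. The estimates for $i=1,\ldots,m$ are exactly those claimed, and since $\sd f_{t_0}(x_0)\subset-x^*+\eps'\overline\B^*$, eliminating the $t_0$-term gives $x^*\in\sum_{i=1}^m\sdf f_i(x_i)+\eps\B^*$, precisely as at the end of the proof of Theorem~\ref{T7.1}. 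I expect the main obstacle to be this third step: checking the hypotheses of Corollary~\ref{C4.3} for the enlarged collection and, above all, exploiting the finiteness of $T$ to upgrade the weak quasiuniform property \eqref{P5.6-1} to genuine quasiuniform lower semicontinuity; once that is in place, the remainder is a faithful transcription of the proof of Theorem~\ref{T7.1}.
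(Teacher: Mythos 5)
Your proposal is correct and takes essentially the same route as the paper's own proof: augment the family with $f_{t_0}(x):=-\ang{x^*,x}+\eps'\|x-\bx\|$, obtain quasiuniform lower semicontinuity of the enlarged finite collection on a ball $B_\de(\bx)$ from Corollary~\ref{C4.3} combined with Remark~\ref{R3.17}\,\ref{R3.17.3} (finiteness upgrading the weak property to the genuine one), then apply Theorem~\ref{T6.01} with the full index set, invoke Remark~\ref{R3.1}\,\ref{R3.1.4} for the traditional closeness conditions, and eliminate the auxiliary term exactly as at the end of the proof of Theorem~\ref{T7.1}. Your write-up only adds details the paper leaves implicit, such as boundedness from below via reflexivity and the verification of $\Lambda_V(\{f_t\}_{t\in S'})>-\infty$.
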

\begin{proof}
It suffices to notice that, with $f_{0}(x):=-\ang{x^*,x}+\eps\|x-\bx\|$ for some $\eps>0$ and all $x\in X$, in view of Remark~\ref{R3.17}\,\ref{R3.17.3},
it follows from Corollary~\ref{C4.3}
that,
{for some $\de>0$},
$\{f_i\}_{i=0,\ldots,m}$ is
quasiuniformly lower semicontinuous on
$B_\de(\bx)$.
Without loss of generality, $f_i$ $(i=1,\ldots,m)$ are lower semicontinuous and bounded from below on $B_\de(\bx)$; and $\bx$ is a
minimum of $\sum_{i=1}^mf_{i}$ over $B_\de(\bx)$.
In view of Remark~\ref{R3.1}\,\ref{R3.1.4},
the conclusion follows from Theorem~\ref{T6.01}
with
{$S:=\{1,\ldots,m\}$}
using the same argument as in the proof of Theorem~\ref{T7.1}.
\end{proof}
\begin{remark}
Proposition~\ref{T5.7} can be deduced from \cite[Theorem~7.1]{FabKruMeh24}.
\end{remark}

\section{Conclusions}\label{sec:conclusions}

We extend the decoupling techniques developed in \cite{BorZhu96,BorIof96,Las01,FabKruMeh24} to arbitrary collections of functions.
Extensions of the notions of uniform lower semicontinuity and firm uniform lower semicontinuity
are discussed.
Several characterizations and sufficient conditions ensuring uniform lower semicontinuity and firm
uniform lower semicontinuity are provided, thus, extending the decoupling theory to infinite
collections of functions. We show that the firm uniform lower semicontinuity property is
stable under uniformly continuous perturbations of one of the functions.

The main Theorem~\ref{T6.1} gives fuzzy subdifferential necessary conditions (multiplier rules) for a
local minimum of the sum of an arbitrary collection of functions.
As a consequence, we establish a generalized version of the (strong) fuzzy sum rule for Fr\'{e}chet subdifferentials of an arbitrary collection of functions without the traditional Lipschitz continuity assumptions.

More subtle ``quasi'' versions of the uniform infimum and uniform lower semicontinuity properties, which are still sufficient for the
multiplier rules and lead to a slightly more advanced fuzzy sum rule for Fr\'echet
subdifferentials, are discussed.

In 
\cite{HanKruLop2}, we consider a standard application of the mentioned results to optimality of an infinite
sum of functions coming from semi(infinite) optimization, where there are
infinitely many constraints.
In the context of the duality theory developed
in \cite{Duf56,Kre61,Bor83} (see also
\cite{GobVol22,DinGobLopVol23}) we have infinitely many associated dual
variables, and the Lagrangian function contains infinite sums of functions.
We apply the 
fuzzy multiplier
methodology
proposed in this paper
for closing the duality gap and
guaranteeing strong duality in convex optimization with an infinite
number of constraints. One possibility consists of considering the
infinitely many dual variables (associated with the infinitely many
constraints) but taking the zero-value except for a finite number of them;
this is so-called \emph{Haar dual} (see, e.g., \cite[Chapters~2 and 8]{GobLop98},
and the references therein). But, from the mathematical point of view and also having in mind
possible applications, it can make sense to enlarge the dual space to
include spaces like $\ell _{\infty }$ and $\ell _{1}$. Two different
associated dual problems are studied in \cite{HanKruLop2}.
By applying
minimax-type theorems we 
show there that Slater condition guarantees
strong duality in both cases, improving some well-known results in
semi-infinite linear optimization. Moreover, the assumption of uniform lower
semicontinuity of the functions involved in the (optimal) Lagrangian allows
us to deduce fuzzy primal optimality conditions. It turns out that this
methodology presents remarkable advantages with respect to alternative
approaches as the one based on the \emph{Farkas-Minkowski property} (see, e.g., \cite{GobLop98}), and the addition of consequent inequalities to the primal set of
constraints (see, e.g., \cite[Theorem 8.2.12 and Corollary 8.2.13]{CorHanLop23}).

\section*{Acknowledgements}
\small
The authors have benefited from fruitful discussions with Pedro P\'erez-Aros  and wish to express him their gratitude.

This work was supported by Grant PID2022-136399NB-C21 funded by MICIU/AEI/10.13039/501100011033 and by ERDF/EU, MICIU of Spain and Universidad de Alicante (Contract Beatriz Galindo BEA-GAL 18/00205), and by AICO/2021/165 of Generalitat Valenciana, and by
Programa Propio Universidad de Alicante (INVB24-01 - Ayudas para Estancias de Personal Investigador Invitado 2025).

Parts of the work were done during Alexander Kruger's stays at the University of Alicante and the Vietnam Institute for Advanced Study in Mathematics in Hanoi.
He is grateful to both institutions for their hospitality and supportive environment.

\section*{Declarations}

\noindent{\bf Data availability. }
Data sharing is not applicable to this article as no datasets have been generated or analysed during the current study.

\noindent{\bf Conflict of interest.} The authors have no competing interests to declare that are relevant to the content of this article.

\addcontentsline{toc}{section}{References}

\bibliographystyle{spmpsci}
\bibliography{buch-kr,kruger,kr-tmp}

\end{document}